\newtheorem{theorem}{Theorem}[section]
\newtheorem{lemma}{Lemma}[section]
\newtheorem{proposition}{Proposition}[section]
\newtheorem{corollary}{Corollary}[section]
\newcommand{\fim}{\hfill\rule{2mm}{2mm}}
\begin{document}
\title{
\vspace{0.5in}  {  Multiplicity of negative-energy solutions for singular-superlinear Schr\"odinger  equations with indefinite-sign potential}}

\author{
{\large Carlos Alberto Santos$^a$}\footnote{
the support of CAPES/Brazil Proc.  $N^o$ $2788/2015-02$.}\,\, ~~~~~~ {\large Ricardo Lima Alves$^b$}  ~~~~~~ {\large Kaye Silva $^{c}$}
\hspace{2mm}\\
{\it\small $^a$Universidade de Bras\'ilia, Departamento de Matem\'atica}\\
{\it\small   70910-900, Bras\'ilia - DF - Brazil}\\
{\it\small $^b$ Universidade de Bras\'ilia, Departamento de Matem\'atica}\\
{\it\small   70910-900, Bras\'ilia - DF - Brazil}\\
{\it\small $^{c}$ Instituto de Matemática e Estatística, Universidade Federal de Goiás}\\
{\it\small   74001-970, Goi\^ania - Go - Brazil}\\
{\it\small e-mails: csantos@unb.br, ricardoalveslima8@gmail.com, kayeoliveira@hotmail.com }\vspace{1mm}\\
}

\date{}
\maketitle \vspace{-0.2cm}

\begin{abstract}
	We are concerned with the multiplicity of positive solutions  for the singular superlinear and subcritical Schr\"odinger equation 
	$$
	\begin{array}{c}
	-\Delta u +V(x)u=\lambda a(x)u^{-\gamma}+b(x)u^{p}~\mbox{in}~ \mathbb{R}^{N},
	\end{array}
	$$
	beyond the Nehari extremal value, as defined in Il'yasov \cite{I}, when the potential  $b \in L^{\infty}(\mathbb{R}^{N})$ may change its sign, $0<a\in L^{\frac{2}{1+\gamma}}(\mathbb{R}^{N})$, $V$ is a positive continuous function, $N\geq 3$ and $\lambda>0$ is a real parameter. The main difficulties come from the non-differentiability of the energy functional and the fact that the intersection of the boundaries of the connected components of the Nehari set is non empty.  We overcome these difficulties by exploring topological structures of that boundary to build non-empty sets whose boundaries have empty intersection and minimizing over them by controlling the energy level.
\end{abstract}

\noindent
{\it \footnotesize 2018 Mathematics Subject Classification}. {\scriptsize 12345, 54321}.\\
{\it \footnotesize Key words}. {\scriptsize Extremal value, Singular term, Variational methods, Nehari manifold.}

%
%
%
\section{\bf Introduction}
\def\theequation{1.\arabic{equation}}\makeatother
\setcounter{equation}{0}

In this paper, we deal with results of multiplicity and non-existence of $H^{1}(\mathbb{R}^{N})$-solutions for the problem
\begin{equation}\label{pq}\tag{$P_{\lambda}$}
	\left\{
	\begin{aligned}
		-\Delta u +V(x)u = \lambda a(x)u^{-\gamma}+ b(x)u^{p}
		~\mbox{in}~ \mathbb{R}^{N}, \\
		u> 0,~\mathbb{R}^{N},~\int_{\mathbb{R}^{N}}Vu^2< \infty,~ u\in H^{1}(\mathbb{R}^{N}), \nonumber
	\end{aligned}
	\right.
\end{equation}
where 
$ 0<a\in L^{\frac{2}{1+\gamma}}(\mathbb{R}^{N}),~b^{+} \neq 0, b\in L^{\infty}(\mathbb{R}^{N})$, $V:\mathbb{R}^{N}\to \mathbb{R}$ is a positive continuous function, $0< \gamma<1<p<2^{\ast}-1$, $N\geq 3$  and
$\lambda>0$ is a real positive parameter. 

Since the pioneering work by Fulks-Maybee \cite{FM} on singular problems, this kind of subject has drawn the attention of several researchers. They showed that if $\Omega \subset \mathbb{R}^{3}$ is a bounded region of the space occupied by an
electrical conductor, then $u$ satisfies the equation
$$cu_{t}-k\Delta u=\frac{E^{2}(x,t)}{t^{\gamma}},$$
where $u(x,t)$ denotes the temperature at the point $x\in \Omega$ and time $t$, $E(x,t)$ describes the local voltage drop, $t^{\gamma}$ with $\gamma>0$ is the electrical resistivity, $c$ and $k$ are the specific heat and the thermal conductivity of the conductor, respectively. 

Due to the applications or mathematical purposes, the issues of multiplicity (both local and global) of solutions for elliptic problems have been largely considered in the last decades. In 1994, Ambrosetti-Brezis-Cerami in \cite{ABC}, by exploring the sub and super solution method and Mountain Pass Theorem, proved a global multiplicity result, i.e.,  there exists a $\Lambda>0$ such that the problem
\begin{equation}\label{js}\tag{$Q_{\lambda}$}
	\left\{
	\begin{aligned}
		&-\Delta u  = \lambda a(x)\vert u \vert^{\gamma-2} u+ b(x)\vert u \vert^{p-2} u
		~\mbox{in}~ \Omega, \\
		&u> 0~\mbox{in }\Omega,~u=0~\mbox{on }\partial\Omega, \nonumber
	\end{aligned}
	\right.
\end{equation}
admits at least two positive solutions for $0<\lambda <\Lambda$,  at least one solution for $\lambda=\Lambda$ and   no  solution at all for $\lambda>\Lambda$, when  $\Omega \subset \mathbb{R}^N$ is a smooth bounded domain, $a=b=1$, $ 1<\gamma<2<p<2^{\ast}$ and $2^{\ast}$ is  the critical Sobolev exponent. Considering more general operators and hypothesis, problem \eqref{js} was further generalized in Figueiredo-Gossez-Ubilla \cite{FGU,FGU1}.

Recently, some authors have studied  problems like ($Q_{\lambda}$)  by using only variational methods, to wit, the Nehari manifold and the fibering method of Pohozaev \cite{P} (see \cite{HSS, SM, SL1, SWL}). In 2018, Silva-Macedo in \cite{SM} took advantage of the $C^1$-regularity of the energy functional associated to problem ($Q_{\lambda}$)  with $a=1$ to refine the Nehari's classical  arguments and to show multiplicity of solutions beyond the Nehari's extremal value 
\begin{equation}\label{E3}
	\lambda_{\ast}=\left(\frac{2-\gamma}{p-\gamma}\right)^{\frac{2-\gamma}{p-2}}\left(\frac{p-2}{p-\gamma}\right)\displaystyle \inf_{0\lneqq u \in H_0^1(\Omega),\int_{\Omega}b|u|^{p+1}>0}\frac{\left(||u||^{2}\right)^{\frac{p-\gamma}{p-2}}}{\left[\int_{\Omega}b|u|^{p}\right]^{\frac{2-\gamma}{p-2}}\left[\int_{\Omega}a|u|^{\gamma}\right]},
\end{equation}
as defined in Il'yasov \cite{I}. 

Similar issues have been considered for singular problems of the type
\begin{equation}\label{ks}\tag{$R_{\lambda}$}
	\left\{
	\begin{aligned}
		&-\Delta u  = \lambda a(x)u^{-\gamma} + b(x)u^{p} 
		~\mbox{in}~ \Omega, \\
		&u> 0~\mbox{in }\Omega,~u=0~\mbox{on }\partial\Omega, \nonumber
	\end{aligned}
	\right.
\end{equation}
where $0<\gamma<1<p<2^{\ast}-1, \Omega \subset \mathbb{R}^{N}$ is a smooth bounded domain. In 2003, Haitao in \cite{H} proved a global multiplicity result for Problem ($R_{\lambda}$) with $a=b=1$ by combining sub-supersolution and variational methods. In 2008, Yijing-Shujie in \cite{SL1} considered the problem $(R_{\lambda})$ with potentials $ a,b \in C(\overline{\Omega})$ satisfying $a\geq 0, a\not\equiv 0 $ and $b$ may change sign. They proved a local multiplicity result, i.e., there exists a  $\Lambda>0$ such that the problem $(R_{\lambda})$ admits at least two non-negative solutions for each $\lambda \in (0,\Lambda)$. Still in this context of bounded smooth domains,  we refer the reader to \cite{PRR, CRT, LM, SWL} where different techniques, more general operators and non-linearities are considered.

On $\mathbb{R}^N$ there are a few results related with existence, multiplicity and non-existence of solutions for Problems like ($Q_{\lambda}$). By using the sub and super solution method combined with perturbation arguments, the authors Carl-Perera \cite{CP}, Gonçalves-Santos \cite{GMS}, C\^irstea-R\v{a}dulesco \cite{CR}, Edelson \cite{E} proved existence of $C^1(\mathbb{R}^N)$-solutions. 

With respect to the variational techniques point of view, as far as we know, there is just one, to wit,  Liu-Guo-Liu \cite{LL} in 2009 proved a local multiplicity result of $D^{1,2}(\mathbb{R}^N)$-solutions for the equation
$$-\Delta u=a(x)u^{-\gamma}+\lambda b(x)u^{p},~x\in \mathbb{R}^{N},u>0, $$
where $N\geq 3,  \lambda>0, 0<\gamma<1<p<2^{\ast}-1$ and $b$ may change sign. They combined a local minimization over the ball with an extension of the Mountain Pass Theorem for nonsmooth functionals (see Canino-Degiovani \cite{CD}). Due to the their techniques, it is not hard to see that their extremal value that still guarantees multiplicity of solutions is less than
\begin{equation}
	\label{eq41}
	\hat{\lambda}=(1-\gamma)\frac{(p+1)^{\frac{1+\gamma}{p-1}}}{2^{\frac{p+\gamma}{p-1}}}\left(\frac{1+\gamma}{p+\gamma}\right)^{\frac{1+\gamma}{p-1}}\left(\frac{p-1}{p+\gamma}\right)\displaystyle \inf_{0\lneqq u \in H_0^1(\mathbb{R}^N),\int_{\mathbb{R}^N}b|u|^{p+1}>0}\frac{\left(||u||^{2}\right)^{\frac{p+\gamma}{p-1}}}{\left[\int_{\mathbb{R}^N}b|u|^{p+1}\right]^{\frac{1+\gamma}{p-1}}\left[\int_{\mathbb{R}^N}a|u|^{1-\gamma}\right]},
\end{equation}
because they were able to show multiplicity of solutions just in the $\lambda$-variation of the parameter $\lambda$ that still produces the second solution with positive energy. 

By using a new approach, we were able to prove multiplicity of solutions for Problem ($P_{\lambda}$) beyond $\hat{\lambda}$, that necessarily implies that all the solutions found by this method have negative energies. Besides this, we were also able to characterize a $\lambda$-behavior of the energy functional along the solutions. 

To state our main results, let us assume that $V:\mathbb{R}^{N}\to \mathbb{R}$ is a positive continuous function that satisfies
\begin{itemize}
	\item[$(V)_{0}$] $V_{0}:=\displaystyle \inf_{x\in \mathbb{R}^{N}}V(x)>0$,
	and  one of the following conditions:
	\begin{itemize}
		\item[$(i)$] $\displaystyle \lim_{|x|\to \infty}V(x)=\infty $;
		\item[$(ii)$] ${1}/{V}\in L^{1}(\mathbb{R}^{N})$;
		\item[$(iii)$] for each $ M>0$ given the  $|\left\{x\in \mathbb{R}^{N}:V(x)\leq M\right\}|<\infty .$
	\end{itemize}
\end{itemize}

Define 
\begin{equation}
	\label{13}
	X=\left\{u\in H^{1}(\mathbb{R}^{N}): \int_{\mathbb{R}^{N}}V(x)u^{2}dx<\infty\right\},
\end{equation}
and observe that $\Phi_{\lambda}:X\rightarrow \mathbb{R}$ defined by
\begin{equation}
	\label{14}
	\Phi_{\lambda}(u)=\frac{1}{2}\int_{\mathbb{R}^{N}}(|\nabla u|^{2}+V(x)u^{2})dx-\frac{\lambda}{1-\gamma}\int_{\mathbb{R}^{N}}a(x)|u|^{1-\gamma}dx-\frac{1}{p+1}\int_{\mathbb{R}^{N}}b(x)|u|^{p+1}dx,
\end{equation}
is well-defined and continuous. One of the main difficulties of this work is the lack of G\^ateaux differentiability of the energy functional $\Phi_\lambda$, which is due to the presence of the singular term.

We say that $u\in X$ is a solution of \eqref{pq} if
\begin{equation*}
\int_{\mathbb{R}^{N}}\nabla u \nabla \psi+V(x)u \psi dx=\lambda \int_{\mathbb{R}^{N}}a(x)u^{-\gamma}\psi dx+\int_{\mathbb{R}^{N}}b(x)u^{p}\psi dx~\mbox{for all }\psi \in X.
\end{equation*}

Related to the structure of the functional $\Phi_{\lambda}$, let us set (see Hirano-Saccon-Shioji \cite{HSS} and Il'yasov \cite{I})
\begin{equation}\label{E3}
	\lambda_{\ast}=\left(\frac{1+\gamma}{p+\gamma}\right)^{\frac{1+\gamma}{p-1}}\left(\frac{p-1}{p+\gamma}\right)\displaystyle \inf_{0\lneqq u \in H_0^1(\mathbb{R}^N),\int_{\mathbb{R}^N}b|u|^{p+1}>0}\frac{\left(||u||^{2}\right)^{\frac{p+\gamma}{p-1}}}{\left[\int_{\mathbb{R}^N}b|u|^{p+1}\right]^{\frac{1+\gamma}{p-1}}\left[\int_{\mathbb{R}^N}a|u|^{1-\gamma}\right]},
\end{equation}
which relates with $\hat{\lambda}>0$ defined at  (\ref{eq41}) by
$$\hat{\lambda}=(1-\gamma)\frac{(p+1)^{\frac{1+\gamma}{p-1}}}{2^{\frac{p+\gamma}{p-1}}}\lambda_{\ast}<\lambda_{\ast}.$$

Our first result is
\begin{theorem}\label{TP1} 
	Suppose that  $0<\gamma<1<p<2^{\ast}-1,0< a\in L^{\frac{2}{1+\gamma}}(\mathbb{R}^{N}),~ b\in L^{\infty}(\mathbb{R}^{N}),~b^{+} \neq 0,~ (V)_{0}$ and $\left[{a}/{b}\right]^{\frac{1}{p+\gamma}}\notin X$ if $b>0$ in $\mathbb{R}^{N}$ hold.  Then there exists an $\epsilon>0$ such that the problem $(P_{\lambda})$ has at least two positive solutions $w_{\lambda}, u_{\lambda} \in X$ for each $0<\lambda<\lambda_{\ast}+\epsilon$ given. Besides this, we have:
	\begin{itemize}
		\item[$a)$] $ \frac{d^2\Phi_\lambda}{dt^2}(t u_\lambda){\big\vert_{t=1}}>0$ and $ \frac{d^2\Phi_\lambda}{dt^2}(t w_\lambda){\big\vert_{t=1}}<0$ for all $0<\lambda<\lambda_\ast + \epsilon$,
		\item[$b)$]   there exists a constant $c>0$ such that $||w_{\lambda}||\geq c$ for all $0<\lambda < \lambda_{\ast}+\epsilon$,
		\item[$c)$] $u_\lambda$ is a ground state solution for all $0<\lambda \leq \lambda_{\ast}$, $\Phi_\lambda(u_\lambda) <0$ for all $0<\lambda<\lambda_\ast + \epsilon$ and $\displaystyle \lim_{\lambda \to 0}||u_{\lambda}||=0$,
		\item[$d)$] the applications $\lambda \longmapsto \Phi_\lambda(u_\lambda)$ and $\lambda \longmapsto \Phi_\lambda(w_\lambda)$ are decreasing for $0<\lambda < \lambda_{\ast}+\epsilon$ and are left-continuous ones for $0<\lambda < \lambda_{\ast}$,
		\item[$e)$]  $\Phi_{\lambda} (w_\lambda) >0$ for  $0<\lambda<\hat{\lambda}$, $\Phi_{\hat{\lambda}}(w_{\hat{\lambda}}) =0$  and $\Phi_\lambda(w_\lambda) <0$ for  $\hat{\lambda}<\lambda<\lambda_\ast + \epsilon$ $(
		$see $\hat{\lambda}$ in $(\ref{eq41}))$,
	\end{itemize}
\end{theorem}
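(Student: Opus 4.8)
The plan is to run a constrained minimization on the Nehari set through the fibering method of Pohozaev, absorbing the non-differentiability of $\Phi_\lambda$ into one-sided perturbation estimates and the degeneracy of the Nehari decomposition into a topological separation argument. For $u\in X\setminus\{0\}$ I would analyze the fibering map $\phi_u(t):=\Phi_\lambda(tu)$, $t>0$, namely
\[
\phi_u(t)=\frac{t^{2}}{2}\|u\|^{2}-\frac{\lambda t^{1-\gamma}}{1-\gamma}\int_{\mathbb{R}^{N}}a|u|^{1-\gamma}-\frac{t^{p+1}}{p+1}\int_{\mathbb{R}^{N}}b|u|^{p+1},
\]
where $\|u\|^{2}=\int_{\mathbb{R}^{N}}(|\nabla u|^{2}+Vu^{2})$. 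Since $\phi_u'(t)\to-\infty$ as $t\to0^{+}$ and, whenever $\int b|u|^{p+1}>0$, $\phi_u(t)\to-\infty$ as $t\to\infty$, the map $\phi_u$ then has exactly two critical points $0<t^{+}(u)<t^{-}(u)$, a local minimum and a local maximum. Sorting points of $\mathcal{N}_\lambda=\{u:\phi_u'(1)=0\}$ by the sign of $\phi_u''(1)$ produces $\mathcal{N}_\lambda^{+}$, $\mathcal{N}_\lambda^{0}$, $\mathcal{N}_\lambda^{-}$, and I would record that $\lambda_\ast$ is precisely the threshold below which $\mathcal{N}_\lambda^{0}=\emptyset$, so that for $0<\lambda<\lambda_\ast$ the sets $\mathcal{N}_\lambda^{\pm}$ are relatively closed and mutually separated.

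To produce $u_\lambda$ I would minimize $\Phi_\lambda$ over $\mathcal{N}_\lambda^{+}$. Coercivity of $\Phi_\lambda$ on $\mathcal{N}_\lambda^{+}$ bounds a minimizing sequence, and the compactness of the embedding $X\hookrightarrow L^{q}(\mathbb{R}^{N})$ for $1-\gamma\le q<2^{\ast}$, granted by $(V)_{0}$, lets me pass to a weak limit $u_\lambda\ge0$, $u_\lambda\ne0$, with $u_\lambda\in\mathcal{N}_\lambda^{+}$. That $u_\lambda$ solves $(P_\lambda)$ is where the singular term bites: testing minimality with $u_\lambda+s\psi$, $\psi\ge0$, and letting $s\to0^{+}$, Fatou's lemma gives the variational inequality $\int\nabla u_\lambda\nabla\psi+Vu_\lambda\psi-\lambda\int au_\lambda^{-\gamma}\psi-\int bu_\lambda^{p}\psi\ge0$; inserting the admissible test function $(u_\lambda+s\psi)^{+}$ for arbitrary $\psi\in X$ and letting $s\to0^{+}$ yields the reverse inequality, hence equality for every $\psi$, and a maximum-principle argument gives $u_\lambda>0$.

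The subtle object is $w_\lambda$, obtained by minimizing over $\mathcal{N}_\lambda^{-}$. For $0<\lambda<\lambda_\ast$ the same scheme works, the separation of $\mathcal{N}_\lambda^{+}$ and $\mathcal{N}_\lambda^{-}$ preventing a minimizing sequence from drifting to the degenerate set. \emph{The main obstacle is the range $\lambda_\ast\le\lambda<\lambda_\ast+\epsilon$}, where $\mathcal{N}_\lambda^{0}\ne\emptyset$ and $\overline{\mathcal{N}_\lambda^{+}}\cap\overline{\mathcal{N}_\lambda^{-}}\supset\mathcal{N}_\lambda^{0}$, so a naive minimization may converge to a non-critical point of $\mathcal{N}_\lambda^{0}$. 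Following the strategy announced in the abstract, I would replace $\mathcal{N}_\lambda^{-}$ by a topologically adapted subset whose boundary is disjoint from $\overline{\mathcal{N}_\lambda^{+}}$, built by controlling the energy level (using that $\Phi_\lambda$ stays strictly above the target value on $\mathcal{N}_\lambda^{0}$), and minimize there; a deformation/continuity argument then confines the minimizer to $\mathcal{N}_\lambda^{-}$ and identifies it as a solution, for $\lambda$ in a right-neighborhood of $\lambda_\ast$.

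Finally I would read off (a)--(e). Item (a) is immediate from $u_\lambda\in\mathcal{N}_\lambda^{+}$ and $w_\lambda\in\mathcal{N}_\lambda^{-}$. For (b), substituting the Nehari identity into $\phi_{w_\lambda}''(1)<0$ gives $(1+\gamma)\|w_\lambda\|^{2}<(p+\gamma)\int b|w_\lambda|^{p+1}\le C\|w_\lambda\|^{p+1}$, whence a $\lambda$-independent bound $\|w_\lambda\|\ge c>0$. For (c), $\Phi_\lambda(u_\lambda)<0$ since $t=1$ is a local minimum of $\phi_{u_\lambda}$ lying below $\phi_{u_\lambda}(0)=0$; the ground-state property for $\lambda\le\lambda_\ast$ follows from $\phi_u(t^{+}(u))\le\phi_u(t^{-}(u))$, giving $\inf_{\mathcal{N}_\lambda^{+}}\Phi_\lambda=\inf_{\mathcal{N}_\lambda}\Phi_\lambda$; and $\|u_\lambda\|\to0$ as $\lambda\to0$ from $t^{+}(u)\to0$. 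For (d), monotonicity comes from $\lambda\mapsto\Phi_\lambda(u)$ being strictly decreasing at fixed $u$ together with a comparison of the constrained infima, and left-continuity from the compactness above. Item (e) I would obtain by comparing $\Phi_\lambda(w_\lambda)$ with the scaling constant defining $\hat\lambda$: the sign of this minimax level changes exactly at $\hat\lambda<\lambda_\ast$, yielding $\Phi_\lambda(w_\lambda)>0$, $=0$, $<0$ on the three indicated ranges.
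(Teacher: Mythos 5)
Your plan for $0<\lambda<\lambda_\ast$ reproduces the paper's argument (fibering analysis, minimization over $\mathcal{N}^{\pm}_\lambda$, one-sided derivative estimates via Fatou, and the $(u_\lambda+s\psi)^{+}$ truncation trick to convert the variational inequality into an equality), but the heart of the theorem --- multiplicity for $\lambda_\ast\le\lambda<\lambda_\ast+\epsilon$ --- is where your proposal has a genuine gap. You propose to cut out a subset of $\mathcal{N}^{-}_\lambda$ separated from $\mathcal{N}^{0}_\lambda$ ``by controlling the energy level, using that $\Phi_\lambda$ stays strictly above the target value on $\mathcal{N}^{0}_\lambda$.'' That inequality is never justified and is far from clear: for $\lambda>\lambda_\ast$ every $u\in\mathcal{N}^{0}_\lambda$ has a strictly decreasing fiber map (case $II$ of Proposition \ref{prop21}), so $\Phi_\lambda(u)=\phi_{\lambda,u}(1)<0$, while the target level $\tilde{J}^{-}_\lambda$ is itself negative once $\lambda>\hat{\lambda}$, so you are comparing two negative quantities with no obvious ordering. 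The paper does something structurally different and never needs such an energy gap on $\mathcal{N}^{0}_\lambda$: it first produces solutions \emph{at} $\lambda=\lambda_\ast$ by a left-limit process $\lambda_n\uparrow\lambda_\ast$ (Proposition \ref{T2a}) --- a step entirely absent from your plan --- then proves that $\mathcal{N}^{0}_{\lambda_\ast}$ is compact (Lemma \ref{L4}) and contains no solutions (Corollary \ref{C1}), hence that the minimizer sets $S^{\pm}_{\lambda_\ast}$ lie at positive \emph{distance} from $\mathcal{N}^{0}_{\lambda_\ast}$ (Proposition \ref{PPP2}). For $\lambda>\lambda_\ast$ it minimizes the composed functionals $J^{\pm}_\lambda(w)=\Phi_\lambda(t^{\pm}_\lambda(w)w)$ over distance-restricted subsets of $\mathcal{N}^{\pm}_{\lambda_\ast}$ --- the Nehari components at the \emph{fixed} parameter $\lambda_\ast$, not at $\lambda$ --- where the projections $t^{\pm}_\lambda$ are guaranteed to exist by the quantitative nondegeneracy of Proposition \ref{PPP1} and Corollary \ref{CCC1}. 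The real work (the Claim inside the proof of Proposition \ref{T2}) is then showing that the minimizer is an \emph{interior} point of the constraint set, which uses the $\lambda$-monotonicity and one-sided continuity of the levels (Lemma \ref{L5}, Propositions \ref{CC1} and \ref{PPP3}). Without an interior-point property your ``deformation/continuity argument'' has nothing to act on: a minimizer sitting on the boundary of your adapted set need not be a critical point, and for a singular (non-differentiable) functional there is no Lagrange-multiplier or deformation lemma to fall back on.

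Two further inaccuracies feed into this. First, you assert that whenever $\int b|u|^{p+1}>0$ the fiber map has exactly two critical points $t^{+}(u)<t^{-}(u)$; this holds only when $\lambda<\lambda(u)$, and since $\lambda_\ast=\inf_{Z^{+}}\lambda(u)$, for $\lambda\ge\lambda_\ast$ there exist directions in $Z^{+}$ whose fiber maps have one degenerate critical point or none (Propositions \ref{prop21} and \ref{lambdava}) --- exactly why the projections you use freely in the supercritical range are only defined on restricted sets. Second, you flag only $w_\lambda$ as problematic beyond $\lambda_\ast$, but the minimization over $\mathcal{N}^{+}_\lambda$ suffers the same defect there: once $\mathcal{N}^{0}_\lambda\neq\emptyset$, the limit of a minimizing sequence in $\mathcal{N}^{+}_\lambda$ can land in $\mathcal{N}^{0}_\lambda$ (the paper's Lemma \ref{LL2} uses $\mathcal{N}^{0}_\lambda=\emptyset$ precisely at this step), so the $+$ side also requires the restricted-set machinery of Propositions \ref{PPP5} and \ref{T2}. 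Your sketches of items $a)$, $b)$, $c)$ and the broad strategy for $d)$, $e)$ are consistent with the paper's Section 6, but they all presuppose the solutions $u_\lambda,w_\lambda$ in the full range $0<\lambda<\lambda_\ast+\epsilon$, so the gap above is not peripheral: it is the theorem.
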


The second result gives us an estimate on how big the number $\epsilon>0$ can be, under additional assumptions on $a$ and $b$. 

\begin{theorem}\label{TP2}
	Suppose that the hypotheses of Theorem \ref{TP1} hold. Moreover, assume that there exists a smooth bounded open set  $\Omega \subset \mathbb{R}^{N}$ such that $b>0$ in $\Omega$ and $a\in L^{\infty}(\Omega)$. Then there exists $\lambda^\ast>0$ such that the problem $(P_{\lambda})$ has no solution at all for $\lambda >\lambda^\ast$. Moreover, we have the exact estimate
	$$0<\lambda_{\ast}< \lambda^\ast=\lambda_{1}^{\frac{p+\gamma}{p-1}} \left(\frac{\gamma+1}{p-1}\right)^{\frac{\gamma+1}{p-1}}\left(\frac{p-1}{p+\gamma}\right)^{\frac{p+\gamma}{p-1}},$$
	where $\lambda_{1}>0$ is given in Lemma $\ref{APB}.$
\end{theorem}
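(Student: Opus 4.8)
The plan is to argue by contradiction and reduce non-existence to an elementary scalar inequality on $\Omega$ that is forced by the principal eigenvalue $\lambda_1$ of Lemma \ref{APB}. Recall that $\lambda_1>0$ is the first eigenvalue, with positive eigenfunction $\varphi_1\in H_0^1(\Omega)$, of the weighted Dirichlet problem $-\Delta\varphi+V\varphi=\lambda_1 m\varphi$ in $\Omega$, $\varphi=0$ on $\partial\Omega$, where $m:=a^{\frac{p-1}{p+\gamma}}b^{\frac{1+\gamma}{p+\gamma}}$; the extra hypotheses of the theorem ($b>0$ in $\Omega$, $a\in L^{\infty}(\Omega)$) are exactly what make $0<m\in L^{\infty}(\Omega)$. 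Suppose, towards a contradiction, that $(P_\lambda)$ has a solution $u$ for some $\lambda>\lambda^\ast$.

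First I would record the one-variable fact behind the threshold. For each fixed $x\in\Omega$ the map $t\mapsto \lambda a(x)t^{-\gamma}+b(x)t^{p}-\lambda_1 m(x)t$ on $(0,\infty)$ is coercive at both ends (as $\gamma\in(0,1)$, $p>1$) and has a unique interior minimiser; a direct computation of that minimiser and minimal value shows the minimum is nonnegative iff $\lambda\ge\lambda^\ast$, and --- crucially --- that the exponents defining $m$ are chosen precisely so that the powers of $a(x)$ and $b(x)$ cancel, leaving the $x$-independent value $\lambda^\ast=\lambda_1^{\frac{p+\gamma}{p-1}}\big(\frac{\gamma+1}{p-1}\big)^{\frac{\gamma+1}{p-1}}\big(\frac{p-1}{p+\gamma}\big)^{\frac{p+\gamma}{p-1}}$. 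Hence, for $\lambda>\lambda^\ast$, one has the strict pointwise inequality $\lambda a(x)t^{-\gamma}+b(x)t^{p}>\lambda_1 m(x)t$ for all $t>0$ and $x\in\Omega$; evaluating at $t=u(x)$ yields $\lambda a u^{-\gamma}+bu^{p}>\lambda_1 m u$ a.e. in $\Omega$.

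Next I would produce the opposing integral inequality by comparison with $\varphi_1$. Extending $\varphi_1$ by zero gives an admissible test function in $X$, so the weak formulation of $(P_\lambda)$ reads $\int_\Omega \nabla u\cdot\nabla\varphi_1+Vu\varphi_1=\lambda\int_\Omega a u^{-\gamma}\varphi_1+\int_\Omega bu^{p}\varphi_1$. Integrating by parts so that the derivatives fall on the (smooth) eigenfunction and using $-\Delta\varphi_1+V\varphi_1=\lambda_1 m\varphi_1$, I get
\[
\int_\Omega\big(\lambda a u^{-\gamma}+bu^{p}-\lambda_1 m u\big)\varphi_1\,dx=\int_{\partial\Omega}u\,\partial_n\varphi_1\,dS.
\]
Since $u>0$ on $\partial\Omega$ and, by the Hopf boundary lemma, $\partial_n\varphi_1<0$ there, the right-hand side is strictly negative; but multiplying the pointwise inequality of the previous paragraph by $\varphi_1>0$ and integrating makes the left-hand side strictly positive --- a contradiction, so $(P_\lambda)$ has no solution for $\lambda>\lambda^\ast$. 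I expect the boundary term to be the main obstacle: since $u$ solves the equation on all of $\mathbb{R}^N$ and does not vanish on $\partial\Omega$, one must justify the integration by parts and the strict sign of $\int_{\partial\Omega}u\,\partial_n\varphi_1$, which rests on $u$ being regular and bounded below by a positive constant on $\overline{\Omega}$ (elliptic regularity, as the right-hand side is bounded there).

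Finally, for the estimate $0<\lambda_\ast<\lambda^\ast$ I would test the infimum in \eqref{E3} with $\varphi_1$, which is admissible because $\int_\Omega b\varphi_1^{p+1}>0$. The algebraic identity $m\varphi_1^{2}=(a\varphi_1^{1-\gamma})^{\frac{p-1}{p+\gamma}}(b\varphi_1^{p+1})^{\frac{1+\gamma}{p+\gamma}}$ and Hölder's inequality with conjugate exponents $\frac{p+\gamma}{p-1}$, $\frac{p+\gamma}{1+\gamma}$ give $\int_\Omega m\varphi_1^{2}\le\big(\int_\Omega a\varphi_1^{1-\gamma}\big)^{\frac{p-1}{p+\gamma}}\big(\int_\Omega b\varphi_1^{p+1}\big)^{\frac{1+\gamma}{p+\gamma}}$; combined with $\|\varphi_1\|^{2}=\lambda_1\int_\Omega m\varphi_1^{2}$ this bounds the quotient in \eqref{E3} evaluated at $\varphi_1$ by $\lambda_1^{\frac{p+\gamma}{p-1}}$, whence $\lambda_\ast\le\lambda^\ast$. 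Strictness follows since equality in Hölder would force $a\varphi_1^{1-\gamma}$ and $b\varphi_1^{p+1}$ to be proportional a.e. on $\Omega$, i.e. $a/b\equiv c\,\varphi_1^{p+\gamma}$, which the data do not satisfy; alternatively a slight perturbation of the test function already makes the inequality strict.
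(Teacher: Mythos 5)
Your non-existence mechanism is structurally the same as the paper's: test $(P_\lambda)$ with the principal Dirichlet eigenfunction on $\Omega$, exploit the sign of the boundary term ($\partial_\nu$ of the eigenfunction $\leq 0$, which the paper gets from Lemma \ref{APB} and Lemma 3.5 of Figueiredo--Gossez--Ubilla rather than a hand-made Hopf argument), and compare against the sharp pointwise scalar inequality whose constant defines $\lambda^\ast$. The one genuinely different ingredient is your weight: you take $m=a^{\frac{p-1}{p+\gamma}}b^{\frac{1+\gamma}{p+\gamma}}$, whereas Lemma \ref{APB} — which the theorem explicitly references — uses $m=\min\{a,b\}$. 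Your choice is elegant: the powers of $a$ and $b$ cancel exactly in the one-variable minimization, so you never need the paper's absorption step $a,b\geq m$. Moreover, since $a^{\theta}b^{1-\theta}\geq\min\{a,b\}$ in $\Omega$, your $\lambda_1$ is $\leq$ the paper's, hence your $\lambda^\ast$ is $\leq$ the paper's, and your non-existence statement implies (and is in general sharper than) the stated one. But be aware that you are therefore \emph{not} proving the identity asserted in the theorem, whose $\lambda_1$ is the eigenvalue of $(A_\Omega)$ with the min-weight; and the regularity needed to justify the integration by parts and the boundary term (which you correctly flag as the main obstacle) is exactly where the paper spends its technical effort (Brezis--Nirenberg interior estimates, bootstrap to $H^{2}(\Omega)\cap C(\overline{\Omega})$, then the FGU lemma), so in your write-up it remains an unfilled step.

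The genuine gap is the strict inequality $\lambda_\ast<\lambda^\ast$. Your Hölder computation correctly yields $\lambda_\ast\leq\lambda(\varphi_1)\leq\lambda^\ast$, but both of your arguments for strictness fail. The claim that the data cannot satisfy $a/b\equiv c\,\varphi_1^{p+\gamma}$ on $\Omega$ is unjustified: nothing in the hypotheses of Theorems \ref{TP1}--\ref{TP2} excludes it, and one can construct admissible data realizing it — solve $-\Delta w+Vw=bw^{p}$ in $\Omega$, $w\in H_0^1(\Omega)$, $w>0$, and set $a:=b\,w^{p+\gamma}$ on $\Omega$ (extended suitably outside); then $m=a^{\frac{p-1}{p+\gamma}}b^{\frac{1+\gamma}{p+\gamma}}=bw^{p-1}$, so $w$ is a positive eigenfunction with $\lambda_1=1$, i.e. $\varphi_1=w$ up to scaling, and equality holds throughout your Hölder chain, giving $\lambda(\varphi_1)=\lambda^\ast$ exactly. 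The fallback ``a slight perturbation of the test function already makes the inequality strict'' is not an argument: you would need to exhibit some $u$ with $\lambda(u)$ strictly below $\lambda^\ast$, and a perturbation of $\varphi_1$ need not decrease $\lambda(\cdot)$. The correct and essentially cost-free route — the one implicit in the paper — is to quote Theorem \ref{TP1}: solutions exist for every $0<\lambda<\lambda_\ast+\epsilon$, so your own non-existence statement forces $\lambda_\ast+\epsilon\leq\lambda^\ast$, hence $\lambda_\ast<\lambda^\ast$. Replacing your strictness paragraph by this observation (and either switching to the min-weight eigenvalue of Lemma \ref{APB} or stating explicitly that your threshold dominates it) would make the proposal sound.
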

Some comments  are in order now: 
\begin{itemize}
	\item[a)] Theorem \ref{TP1} is new in the literature by showing multiplicity of solutions with negative energies as well,
	\item[b)] traditionally two solutions are found by minimizing the energy functional over connected components of the Nehari manifold which are separated in the sense that their boundaries have disjoint intersection. In this work we go further, because we find solutions  in the case where such intersection is not empty even in the context of singular problems,
	\item[c)] the characterization of the $\lambda$-behavior about continuity and monotonicity of the energy functional along the solutions  is new as well,
	\item[d)] Theorem \ref{TP1} and Theorem \ref{TP2} induce us to conjecture that there exists a bifurcation point $\tilde{\lambda}>0$ with $\lambda_{\ast} + \epsilon \leq \tilde{\lambda} \leq \lambda^{\ast}$ for which the two solutions collapse. 
\end{itemize}
Summarizing our results in a picture we have
\begin{figure}[h]
	\centering
	\begin{tikzpicture}[scale=.50]
	\draw[thick, ->] (-1, 3) -- (10, 3);
	\draw[thick, ->] (0, -1) -- (0, 7);
	\draw[thick] (0, 3) .. controls (0, 1) and (1, 1) ..(5,0.2);
	\draw (10,3) node[below]{$\lambda$};
	\draw (0, 3) node[below left]{$0$};
	\draw (0,7) node[left]{$Energy$};
	
	\draw[thick, dotted] (5,0.2) -- (8,-0.2);
	\draw (5,3) node[above]{$\lambda_{\ast}+\epsilon$};
	\draw (3,3) node[above]{$\lambda_{\ast}$};
	\draw (8,3) node[above]{$\tilde{\lambda} $};
	\draw (9,3) node[above]{$\lambda^{\ast} $};
	\draw (1.6,3) node[above]{$\hat{\lambda}$};
	\draw[thick] (0, 5) .. controls (0, 5) and (1, 2) ..(5,1);
	\draw[thick, dotted] (5, 1) -- (8,-0.2);
	\draw[thick, dotted] (5, 3) -- (5,0.2);
	\draw[thick, dotted] (8, 3) -- (8,-0.2);
	\draw (1.3,0.8) node[below]{$\Phi_{\lambda}(u_{\lambda})$};
	\draw (1.7,5) node[below]{$\Phi_{\lambda}(w_{\lambda})$};
	\draw (3,-1) node[below]{\textrm{Fig. 1  Energy depending on $\lambda$ }};
	\end{tikzpicture}
	\hspace{0.8cm}
\end{figure}

This paper is organized in the following way. In Section 2, we collect some technical results about the energy functional $\Phi_{\lambda},\lambda>0$ and the Nehari manifold associated to $\Phi_\lambda$. In Section 3, we show the multiplicity of solutions for $0<\lambda<\lambda_{{\ast}}$. The proof of multiplicity of solutions for $\lambda=\lambda_{{\ast}}$ will be done in Section 4 while in Section 5 we prove the multiplicity for $\lambda>\lambda_{{\ast}}$. In the Section 6 we prove Theorem \ref{TP1} and \ref{TP2}.

Throughout this paper, we make use of the following notations:
\begin{itemize}
	\item $c$ and $C$ are possibly different positive constants which may change from line to line,
	\item  $b^{+}=\max \left\{b,0\right\} $ is the positive part of the function $b$,
	\item  $S=\left\{u\in X:||u||=1 \right\} $ is the unitary sphere, where
	$$||u||^2= \int_{\mathbb{R}^{N}}\vert\nabla u \vert^2+V(x)u^2dx,$$
	\item $\langle \Phi^{'}(u), \psi \rangle $ denotes the G\^ateaux derivative of $\Phi$ at $u$ with respect to the direction $\psi \in X$. 
\end{itemize}
\section{Topological structures associated to the energy functional}
\mbox{}

In this section, let us assume the hypotheses of Theorem \ref{TP1} to prove some topological properties for the functional $\Phi_{\lambda}$. Let us endow $X$ with the inner product 
$$(u,w)= \int_{\mathbb{R}^{N}}\nabla u \nabla w+V(x)u wdx,$$
which turns $X$  into a Hilbert space with induced norm given by $||u||^{2}=(u,u)$. As a consequence, one deduces immediately from  $(V)_{0}$ that $X$ is embedded continuously into $H^{1}(\mathbb{R}^{N})$. 
\begin{lemma}
	\label{lemma1}
	The subspace $X$ is continuously embedded  into $H^{1}(\mathbb{R}^{N})$ for $q\in [2,2^{\ast}]$ and compact embedded for all $q\in [2,2^{\ast})$.
\end{lemma}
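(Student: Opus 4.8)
The plan is to read the statement in the only way that makes the role of $q$ meaningful, namely that $X$ embeds continuously into $L^{q}(\mathbb{R}^{N})$ for $q\in[2,2^{\ast}]$ and compactly for $q\in[2,2^{\ast})$; the continuous part is then routine and the whole difficulty lies in the compactness. First I would dispatch the continuous embedding: since $(V)_{0}$ gives $V\geq V_{0}>0$, one has $||u||^{2}\geq\min\{1,V_{0}\}\,\|u\|_{H^{1}}^{2}$, so $X\hookrightarrow H^{1}(\mathbb{R}^{N})$ continuously, and composing with the Sobolev embedding $H^{1}(\mathbb{R}^{N})\hookrightarrow L^{q}(\mathbb{R}^{N})$ for $q\in[2,2^{\ast}]$ yields the claim.

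For the compact embedding I would take a bounded sequence $(u_{n})\subset X$, pass to a subsequence with $u_{n}\rightharpoonup u$ weakly in $X$ (hence weakly in $H^{1}$), and---replacing $u_{n}$ by $u_{n}-u$---reduce to showing $u_{n}\to 0$ strongly in $L^{q}$ for $q\in[2,2^{\ast})$. The core is a uniform tightness estimate on the tails $\int_{|x|>R}u_{n}^{2}\,dx$. The key preliminary observation is that each of the three alternatives reduces to condition $(iii)$: indeed $(i)$ forces $\{V\leq M\}$ to be bounded, and $(ii)$ gives $|\{V\leq M\}|\leq M\,\|1/V\|_{L^{1}}<\infty$, so in all cases $|\{V\leq M\}|<\infty$ for every $M>0$.

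With $(iii)$ in hand I would split, for fixed $M>0$,
\[
\int_{|x|>R}u_{n}^{2}
=\int_{\{|x|>R\}\cap\{V>M\}}u_{n}^{2}
+\int_{\{|x|>R\}\cap\{V\leq M\}}u_{n}^{2}.
\]
The first integral is at most $M^{-1}\int V u_{n}^{2}\leq C/M$, uniformly in $n$. For the second, Hölder with exponents $2^{\ast}/2$ and its conjugate bounds it by $|\{V\leq M\}\cap\{|x|>R\}|^{2/N}\,\|u_{n}\|_{L^{2^{\ast}}}^{2}$, and since $\{V\leq M\}$ has finite measure this set measure tends to $0$ as $R\to\infty$ while $\|u_{n}\|_{L^{2^{\ast}}}$ stays bounded by the continuous embedding; choosing $M$ large and then $R$ large makes both terms small uniformly in $n$. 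On the ball $B_{R}$, the Rellich--Kondrachov theorem gives $u_{n}\to 0$ in $L^{2}(B_{R})$, so combining with the tail estimate yields $u_{n}\to 0$ in $L^{2}(\mathbb{R}^{N})$. Finally, for $q\in(2,2^{\ast})$ I would interpolate, $\|u_{n}\|_{L^{q}}\leq\|u_{n}\|_{L^{2}}^{1-\theta}\,\|u_{n}\|_{L^{2^{\ast}}}^{\theta}$, and use boundedness of $\|u_{n}\|_{L^{2^{\ast}}}$ to conclude $\|u_{n}\|_{L^{q}}\to 0$.

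The main obstacle is precisely this tail estimate: one must extract tightness from the potential without assuming genuine coercivity $V\to\infty$, and the delicate point is organizing the three hypotheses into the single usable statement $(iii)$ and then balancing the choice of $M$ against $R$ so that both pieces are controlled uniformly in $n$.
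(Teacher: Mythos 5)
Your proposal is correct, and in fact it supplies something the paper does not: the paper states Lemma \ref{lemma1} with no proof at all, treating it as a known embedding result for Schr\"odinger operators with potentials satisfying conditions of type $(V)_0$ (the relevant literature -- Bartsch--Wang, Omana--Willem, Costa, Kondratev--Shubin -- appears in the bibliography, though it is not even cited next to the lemma). So there is no argument in the paper to compare against; what you wrote is the standard self-contained proof, and it is sound. Your reading of the (misstated) lemma as $X\hookrightarrow L^{q}(\mathbb{R}^{N})$ is the intended one. The two genuinely useful points in your write-up are exactly the ones that make the standard argument work: first, the observation that both $(i)$ and $(ii)$ imply $(iii)$, so one may prove compactness under the single hypothesis $|\{V\leq M\}|<\infty$ (the inequality $|\{V\leq M\}|\leq M\,\|1/V\|_{L^{1}}$ is the right way to absorb case $(ii)$); second, the tail splitting $\{V>M\}\cup\{V\leq M\}$ with the first piece controlled by $M^{-1}\int Vu_{n}^{2}$ and the second by H\"older against $\|u_{n}\|_{L^{2^{\ast}}}^{2}$ and the vanishing measure $|\{V\leq M\}\cap\{|x|>R\}|^{2/N}$, with $M$ chosen before $R$. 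Combined with Rellich--Kondrachov on $B_{R}$ and interpolation for $2<q<2^{\ast}$, this closes the proof; I see no gap.
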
 
It follows from Lemma \ref{lemma1} that
\begin{lemma}
	\label{lemma2}
	If $\lambda>0$ then $\Phi_{\lambda}$  is a  continuous and weakly lower semicontinuous 
	functional.
\end{lemma}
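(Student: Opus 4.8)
The plan is to split the functional as $\Phi_\lambda=\frac12\|\cdot\|^2-\lambda A-B$, where $A(u)=\frac{1}{1-\gamma}\int_{\mathbb{R}^{N}}a|u|^{1-\gamma}\,dx$ and $B(u)=\frac{1}{p+1}\int_{\mathbb{R}^{N}}b|u|^{p+1}\,dx$, and to establish strong continuity and then weak lower semicontinuity term by term. The quadratic part $u\mapsto\frac12\|u\|^2$ is immediately continuous, and being convex and continuous it is automatically weakly lower semicontinuous, so it will only help the $\liminf$ inequality. The whole difficulty is therefore concentrated in the two nonlinear terms $A$ and $B$.

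For $B$, since $1<p<2^{\ast}-1$ gives $p+1\in(2,2^{\ast})$, Lemma \ref{lemma1} yields that $X$ embeds continuously (indeed compactly) into $L^{p+1}(\mathbb{R}^{N})$. Combining $b\in L^{\infty}$ with the elementary bound $\big||s|^{p+1}-|t|^{p+1}\big|\le C(|s|^{p}+|t|^{p})|s-t|$ and Hölder's inequality, I would show that $u\mapsto\int b|u|^{p+1}$ is continuous on $L^{p+1}$, hence that $B$ is continuous on $X$; equivalently, this is the continuity of the associated Nemytskii operator.

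The main obstacle is the singular term $A$, because the exponent $1-\gamma<1$ makes $t\mapsto|t|^{1-\gamma}$ sublinear and non-Lipschitz at the origin. Here the integrability hypothesis $a\in L^{2/(1+\gamma)}(\mathbb{R}^{N})$ is exactly calibrated: since $\frac{1+\gamma}{2}+\frac{1-\gamma}{2}=1$, Hölder's inequality gives $\int a|u|^{1-\gamma}\le\|a\|_{2/(1+\gamma)}\|u\|_{L^{2}}^{1-\gamma}$, so $A$ is well defined and finite on $X$. To prove continuity I would take $u_{n}\to u$ in $X$, hence in $L^{2}$ by Lemma \ref{lemma1}, pass to a subsequence converging a.e.\ and dominated by some $h\in L^{2}$, and apply dominated convergence with the integrable majorant $a\,h^{1-\gamma}$; the standard subsequence principle then upgrades this to convergence of the full sequence, giving continuity of $A$.

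Finally, for weak lower semicontinuity I would again invoke the compactness in Lemma \ref{lemma1}: if $u_{n}\rightharpoonup u$ in $X$, then $u_{n}\to u$ strongly in $L^{2}$ and in $L^{p+1}$, so the continuity statements just proved make $A$ and $B$ \emph{weakly continuous}, i.e.\ $A(u_{n})\to A(u)$ and $B(u_{n})\to B(u)$. Together with the weak lower semicontinuity of $\frac12\|\cdot\|^{2}$ this yields $\liminf_{n}\Phi_\lambda(u_{n})\ge\frac12\|u\|^{2}-\lambda A(u)-B(u)=\Phi_\lambda(u)$, which completes the argument. The delicate point to watch throughout is the behaviour of $A$ near the set $\{u=0\}$; everything else is a routine consequence of the embeddings.
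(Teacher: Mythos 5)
Your proposal is correct and follows essentially the same route as the paper: both arguments rest on the compact embedding of Lemma \ref{lemma1} (a.e.\ convergence plus an $L^{2}$ dominating function), H\"older's inequality with the conjugate pair $\tfrac{2}{1+\gamma}$, $\tfrac{2}{1-\gamma}$ for the singular term, boundedness of $b$ for the superlinear term, and the observation that both integral terms are weakly continuous so that only the norm contributes to the $\liminf$. The only cosmetic difference is where dominated convergence is applied — you dominate $a|u_{n}|^{1-\gamma}$ directly by $a\,h^{1-\gamma}$, while the paper dominates $\bigl||u_{n}|^{1-\gamma}-|u|^{1-\gamma}\bigr|^{2/(1-\gamma)}$ and then applies H\"older to the difference — which does not change the substance of the argument.
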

\begin{proof} We prove that $\Phi_{\lambda}$ is weakly lower semicontinuous (the proof of the continuity is almost similar). Take $\left\{u_{n}\right\}\subset X$ such that $u_{n}\rightharpoonup u$. It follows from Lemma \ref{lemma1} that 
	$$u_{n}\rightarrow u~\mbox{in}~ L^{q}(\mathbb{R}^{N}),~u_{n}\rightarrow u ~\mbox{a.e. in}~ \mathbb{R}^{N}~\mbox{and} ~|u_{n}(x)|\leq g_{q}(x)~\mbox{a.e. in}~\mathbb{R}^{N}.$$
	for some  $ g_{q}\in L^{q}(\mathbb{R}^{N})$. Since $0<\gamma < 1$, we obtain
	$$||u_{n}|^{1-\gamma}-|u|^{1-\gamma}|^{\frac{2}{1-\gamma}}\rightarrow 0~\mbox{and}~||u_{n}|^{1-\gamma}-|u|^{1-\gamma}|^{\frac{2}{1-\gamma}}\leq 2^{\frac{2}{1-\gamma}}g^{2}_{2}\in L^{1}(\mathbb{R}^{N})~\mbox{a.e. in}~\mathbb{R}^{N}.$$
	From $a\in L^{2/(1+\gamma)}(\mathbb{R}^N)$, the H\"older inequality and the Lebesgue dominated convergence theorem, we conclude that  
	$$\big| \int_{\mathbb{R}^{N}}a(x)(|u_{n}|^{1-\gamma}-|u|^{1-\gamma})\big|\leq [\int_{\mathbb{R}^{N}}(a(x))^{\frac{2}{1+\gamma}}]^{\frac{1+\gamma}{2}}[\int_{\mathbb{R}^{N}}||u_{n}|^{1-\gamma}-|u|^{1-\gamma}|^{\frac{2}{1-\gamma}}]^{\frac{1-\gamma}{2}}\rightarrow 0,$$

	Again, by using Lemma \ref{lemma1} and $b\in L^{\infty}(\mathbb{R}^N)$, we have  that $\int_{\mathbb{R}^{N}}b(x)|u_{n}|^{p+1}\rightarrow \int_{\mathbb{R}^{N}}b(x)|u|^{p+1}$ holds which completes the proof.
\end{proof}

Since we are interested in positive solutions, let us constrain  $\Phi_\lambda$ to the cone of non-negative functions of $X$, that is, 
$$X_{+}=\left\{u\in X\setminus \left\{0\right\}:u\geq 0 \right\}.$$
Define the $C^{\infty}$-fiber map $\phi_{\lambda,u}:(0,\infty)\rightarrow \mathbb{R}$ by
$$\phi_{\lambda,u}(t)=\Phi_{\lambda}(tu)=\frac{t^{2}}{2}||u||^{2}-\frac{t^{1-\gamma}\lambda}{1-\gamma}\int_{\mathbb{R}^{N}}a(x)|u|^{1-\gamma}dx-\frac{t^{p+1}}{p+1}\int_{\mathbb{R}^{N}}b(x)|u|^{p+1}dx,$$
for each $u\in X_{+}$ and $\lambda >0$ given. It is clear that
$$
\phi^{'}_{\lambda,u}(t)=t||u||^{2}-t^{-\gamma}\lambda \int_{\mathbb{R}^{N}}a(x)|u|^{1-\gamma}dx-t^{p}\int_{\mathbb{R}^{N}}b(x)|u|^{p+1}dx,
$$
\begin{equation}
	\label{17}
	\phi^{''}_{\lambda,u}(t)=||u||^{2}+\gamma t^{-\gamma-1}\lambda \int_{\mathbb{R}^{N}}a(x)|u|^{1-\gamma}dx-pt^{p-1}\int_{\mathbb{R}^{N}}b(x)|u|^{p+1}dx
\end{equation}
and if $u \in X_+$ is a solution of $(P_{\lambda})$, then $u\in \mathcal{N}_{\lambda} $, where
\begin{align*}
	\mathcal{N}_{\lambda}\equiv\left\{u\in X_{+}:||u||^{2}-\int_{\mathbb{R}^{N}}a(x)|u|^{1-\gamma}dx-\lambda\int_{\mathbb{R}^{N}}b(x)|u|^{p+1}dx=0\right\}
	=\left\{u\in X_{+}:\phi^{'}_{\lambda,u}(1)=0.\right\}.
\end{align*}
Although $\mathcal{N}_{\lambda}$ does not have enough regularity, let us refer to it as the Nehari manifold associated to $(P_{\lambda})$ from now on. It is classical to split it in three disjoint sets 
\begin{align*}
	\mathcal{N}^{-}_{\lambda}\equiv\left\{u\in  \mathcal{N}_{\lambda} :||u||^{2}+\gamma \lambda \int_{\mathbb{R}^{N}}a(x)|u|^{1-\gamma}dx-p\int_{\mathbb{R}^{N}}b(x)|u|^{p+1}dx<0\right\}=\left\{u\in  \mathcal{N}_{\lambda}:\phi^{''}_{\lambda,u}(1)<0\right\},~~~~~~~~~~~~~~~~~~~~~~~~~~~~~~~~~~~~~~~~~~~~~~~~~~~~~~~ \\
	\mathcal{N}^{+}_{\lambda}\equiv\left\{u\in  \mathcal{N}_{\lambda} :||u||^{2}+\gamma \lambda\int_{\mathbb{R}^{N}}a(x)|u|^{1-\gamma}dx-p\int_{\mathbb{R}^{N}}b(x)|u|^{p+1}dx>0\right\}
	=\left\{u\in  \mathcal{N}_{\lambda}:\phi^{''}_{\lambda,u}(1)>0\right\},~~~~~~~~~~~~~~~~~~~~~~~~~~~~~~~~~~~~~~~~~~~~~~~~~~~~~~~\\                                                             \mathcal{N}^{0}_{\lambda}\equiv\left\{u\in  \mathcal{N}_{\lambda} :||u||^{2}+\gamma \lambda \int_{\mathbb{R}^{N}}a(x)|u|^{1-\gamma}dx-p\int_{\mathbb{R}^{N}}b(x)|u|^{p+1}dx=0\right\}=\left\{u\in  \mathcal{N}_{\lambda}:\phi^{''}_{\lambda,u}(1)
	=0\right\}. ~~~~~~~~~~~~~~~~~~~~~~~~~~~~~~~~~~~~~~~~~~~~~~~~~~~~~~~
	& 
\end{align*}
We will study the  structure of the sets $\mathcal{N}^{-}_{\lambda}, \mathcal{N}^{0}_{\lambda}, \mathcal{N}^{+}_{\lambda}$ and show existence of solutions on $\mathcal{N}^{-}_{\lambda}$ and $\mathcal{N}^{+}_{\lambda}$. The easiest case is when $\mathcal{N}_\lambda^0=\emptyset$. One of our main contributions to the literature of singular problems is to show existence of solutions on $\mathcal{N}^{-}_{\lambda}$ and $\mathcal{N}^{+}_{\lambda}$ beyond the extremal value, for which  $\mathcal{N}^{0}_{\lambda}$ is not empty anymore. 

The next proposition is straightforward.
\begin{proposition} \label{prop21} Let $u\in X_{+}$ and $\lambda>0$. If $\int b|u|^{p+1}\leq 0$, then $\phi_{\lambda,u}$  has only one critical point at $t^{+}_{\lambda}(u)\in (0,\infty),$ which satisfies $\phi^{''}_{\lambda,u}(t^{+}_{\lambda}(u))>0$. If $\int b|u|^{p+1}>0$, then there are three possibilities:
	\begin{itemize}
		\item[$(I)$] there are only two critical points for $\phi_{\lambda,u}$. The first one is $t^{+}_{\lambda}(u)$ with $\phi^{''}_{\lambda,u}(t^{+}_{\lambda}(u))>0$ and the second one is $t^{-}_{\lambda}(u)$ with $\phi^{''}_{\lambda,u}(t^{-}_{\lambda}(u))<0$. Moreover, $\phi_{\lambda,u}$ is decreasing over the intervals $[0,t^{+}_{\lambda}(u)], [t^{-}_{\lambda}(u),\infty)$ and increasing over the the interval $[t^{+}_{\lambda}(u),t^{-}_{\lambda}(u)]$ $($evidently $0<t^{+}_{\lambda}(u)<t^{-}_{\lambda}(u)$ \!$)$,
		\item[$(II)$] there is only one critical point $t^{0}_{\lambda}(u)>0$ for $\phi_{\lambda,u}$, which is an inflection point. Moreover, $\phi_{\lambda,u}$ is decreasing for $t>0$,
		\item[$(III)$] the function $\phi_{\lambda,u}$ is decreasing for $t>0$ and has no critical points.
	\end{itemize}
\end{proposition}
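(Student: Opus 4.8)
The plan is to analyze the fiber map $\phi_{\lambda,u}$ entirely through its first derivative $\phi'_{\lambda,u}(t)$, which I will rewrite in a form that isolates the competition between the singular and superlinear terms. Dividing the equation $\phi'_{\lambda,u}(t)=0$ by $t>0$ and setting $A=\|u\|^2>0$, $B=\lambda\int a|u|^{1-\gamma}\,dx>0$ (strictly positive since $a>0$ and $u\neq 0$), and $C=\int b|u|^{p+1}\,dx$, the critical-point equation becomes $A t^{1+\gamma}-B-C t^{p+\gamma}=0$, i.e. the zeros of $\phi'_{\lambda,u}$ are exactly the solutions of $g(t):=A t^{1+\gamma}-C t^{p+\gamma}=B$. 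The whole proposition reduces to counting intersections of the graph of $g$ with the horizontal line at height $B>0$.

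First I would treat the case $C=\int b|u|^{p+1}\le 0$. Here $g(t)=At^{1+\gamma}-Ct^{p+\gamma}$ is strictly increasing from $0$ to $+\infty$ on $(0,\infty)$ (both terms are nondecreasing, the first strictly so), hence meets the level $B$ at exactly one point $t^+_\lambda(u)$. To see the sign of the second derivative there, I would observe that $\phi'_{\lambda,u}(t)=t^{-\gamma}(g(t)-B)$, so $\phi'_{\lambda,u}$ changes from negative to positive as $t$ crosses $t^+_\lambda(u)$ (because $g$ is increasing), forcing $\phi''_{\lambda,u}(t^+_\lambda(u))\ge 0$; strictness follows since $\phi''$ cannot vanish at a simple crossing of a strictly monotone $g$. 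This gives the first assertion.

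For the case $C>0$, I would study $g(t)=At^{1+\gamma}-Ct^{p+\gamma}$ on its own. Since $p>1$, we have $p+\gamma>1+\gamma$, so $g(t)\to -\infty$ as $t\to\infty$, while $g(0)=0$ and $g(t)>0$ for small $t$. Differentiating, $g'(t)=t^\gamma\bigl[(1+\gamma)A-(p+\gamma)C t^{p-1}\bigr]$ has a unique zero $t_{\max}$ on $(0,\infty)$, so $g$ increases on $(0,t_{\max})$ and decreases on $(t_{\max},\infty)$, attaining a unique interior maximum value $g(t_{\max})>0$. Comparing this maximum to the level $B$ yields precisely three subcases: if $B<g(t_{\max})$ the line meets the graph twice, giving $t^+_\lambda(u)<t_{\max}<t^-_\lambda(u)$ (case $I$); if $B=g(t_{\max})$ the line is tangent at $t_{\max}=:t^0_\lambda(u)$, a single degenerate zero (case $II$); and if $B>g(t_{\max})$ there is no zero (case $III$). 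The sign of $\phi''_{\lambda,u}$ at each root follows from the sign change of $g-B$, exactly as in the first case: at $t^+_\lambda(u)$ the function $g-B$ goes $-$ to $+$ so $\phi''>0$, at $t^-_\lambda(u)$ it goes $+$ to $-$ so $\phi''<0$, and the monotonicity intervals of $\phi_{\lambda,u}=\Phi_\lambda(tu)$ read off directly from the sign of $\phi'_{\lambda,u}=t^{-\gamma}(g-B)$.

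This argument is almost entirely routine single-variable calculus; the only point requiring care is confirming that $g'$ has a \emph{unique} positive zero, which is immediate because $(1+\gamma)A=(p+\gamma)Ct^{p-1}$ has the single positive solution $t_{\max}=\bigl[(1+\gamma)A/((p+\gamma)C)\bigr]^{1/(p-1)}$. I do not anticipate a genuine obstacle here, so I would keep the exposition brief and simply record the three intersection patterns together with the corresponding signs of $\phi''_{\lambda,u}$.
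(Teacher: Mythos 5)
Your proof is correct, and it is exactly the ``straightforward'' single-variable analysis the paper has in mind: the paper offers no proof of this proposition, and your reduction to counting intersections of $g(t)=At^{1+\gamma}-Ct^{p+\gamma}$ with the horizontal level $B$ is the standard fibering argument (it is also precisely the computation behind the paper's formulas for $t(u)$ and $\lambda(u)$, which come from the tangency case $B=g(t_{\max})$). Two small repairs: the passage from $\phi'_{\lambda,u}(t)=0$ to $g(t)=B$ is multiplication by $t^{\gamma}$, not division by $t$; and your appeal to ``a simple crossing of a strictly monotone $g$'' to conclude $\phi''_{\lambda,u}(t^{+}_{\lambda}(u))>0$ is not a valid general principle (a strictly increasing function can have vanishing derivative at a crossing, e.g.\ $t\mapsto t^{3}$ at $0$). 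The clean fix, which handles every case at once, is the identity $\phi''_{\lambda,u}(t_{0})=t_{0}^{-\gamma}g'(t_{0})$ valid at any critical point $t_{0}$ (differentiate $\phi'_{\lambda,u}(t)=t^{-\gamma}\bigl(g(t)-B\bigr)$ and use $g(t_{0})=B$): when $C\le 0$ one has $g'>0$ everywhere, and when $C>0$ one has $g'(t^{+}_{\lambda}(u))>0$, $g'(t^{-}_{\lambda}(u))<0$, and $g'(t^{0}_{\lambda}(u))=0$, giving the stated signs of $\phi''_{\lambda,u}$ directly.
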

Let us study the set $\mathcal{N}_\lambda^0$. One can easily see that if $u\in\mathcal{N}_\lambda^0$ then $\int_{\mathbb{R}^{N}}b|u|^{p+1}>0$, therefore, we introduce the set 
 $$
 Z^{+}\equiv\left\{u\in X_{+}:\int_{\mathbb{R}^{N}}b|u|^{p+1}>0 \right\}.
 $$
Observe that $Z^+$ is a cone. For $u\in Z^+$ coonsider the system
$$
\begin{array}{c}
\phi^{'}_{\lambda,u}(t)=
\phi^{''}_{\lambda,u}(t)=0,
\end{array}
$$
that is
$$
\left\{
\begin{aligned}
t||u||^{2}-t^{-\gamma}\lambda \int_{\mathbb{R}^{N}}a(x)|u|^{1-\gamma}dx-t^{p}\int_{\mathbb{R}^{N}}b(x)|u|^{p+1}dx=0,
\\
||u||^{2}+\gamma \lambda t^{-\gamma-1}\int_{\mathbb{R}^{N}}a(x)|u|^{1-\gamma}dx-pt^{p-1}\int_{\mathbb{R}^{N}}b(x)|u|^{p+1}dx=0.
\end{aligned}
\right.
$$
The system has a unique solution which is given by $(t(u),\lambda(u))$, where
\begin{equation}\label{E2}
	\left\{
	\begin{aligned}
		t(u)&= \left(\frac{1+\gamma}{p+\gamma}\right)^{\frac{1}{p-1}} \left[\frac{||u||^{2}}{\int_{\mathbb{R}^{N}} b|u|^{p+1}}\right]^{\frac{1}{p-1}}
		\\
		\lambda(u)&=C(\gamma,p)\frac{\left(||u||^{2}\right)^{\frac{p+\gamma}{p-1}}}{\left[\int_{\mathbb{R}^{N}}b|u|^{p+1}\right]^{\frac{1+\gamma}{p-1}}\left[\int_{\mathbb{R}^{N}}a|u|^{1-\gamma}\right]},
	\end{aligned}
	\right.
\end{equation}
where
$$C(\gamma,p)\equiv\left(\frac{1+\gamma}{p+\gamma}\right)^{\frac{1+\gamma}{p-1}}\left(\frac{p-1}{p+\gamma}\right).$$
From the definition of $\lambda(u)$ we conclude that
\begin{proposition}\label{lambdava} Suppose that $u\in Z^{+}$. Then, if $\lambda\in(0,\lambda(u))$ the fiber map $\phi_{\lambda,u}$ satisfies $I)$ of Proposition $\ref{prop21}$, while $\phi_{\lambda(u),u}$ satisfies $II)$ and if $\lambda\in (\lambda(u),\infty)$ it must satisfies $III)$.
\end{proposition}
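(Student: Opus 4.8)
The plan is to clear the singular term from the fiber derivative, which collapses the whole trichotomy into the elementary geometry of a single increasing-then-decreasing profile. Fix $u\in Z^{+}$ and abbreviate $A=\|u\|^{2}>0$, $B=\int_{\mathbb{R}^{N}}a|u|^{1-\gamma}dx>0$ (positive because $a>0$ and $u\neq 0$) and $D=\int_{\mathbb{R}^{N}}b|u|^{p+1}dx>0$ (positive because $u\in Z^{+}$). Introducing $\psi(t):=t^{\gamma}\phi^{'}_{\lambda,u}(t)=At^{1+\gamma}-Dt^{p+\gamma}-\lambda B$, I observe that for $t>0$ the critical points of $\phi_{\lambda,u}$ are exactly the zeros of $\psi$, that is, the solutions of $m(t)=\lambda B$, where $m(t):=At^{1+\gamma}-Dt^{p+\gamma}$. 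This reduction is the key move, since $m$ is smooth on $(0,\infty)$ and carries no singularity.

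First I would analyze the profile of $m$. Since $m^{'}(t)=t^{\gamma}\big[(1+\gamma)A-(p+\gamma)Dt^{p-1}\big]$ and $p-1>0$, the bracket is strictly decreasing in $t$, positive near $0$ and eventually negative, so it vanishes at a unique point which turns out to be precisely $t(u)$ from \eqref{E2}; thus $m$ is strictly increasing on $(0,t(u))$, strictly decreasing on $(t(u),\infty)$, with $m(0^{+})=0$ and $m(t)\to-\infty$. A direct computation of the maximum value yields $m(t(u))=C(\gamma,p)\,A^{(p+\gamma)/(p-1)}D^{-(1+\gamma)/(p-1)}$, and dividing by $B$ reproduces exactly $m(t(u))/B=\lambda(u)$ as defined in \eqref{E2}. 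This algebraic identity, matching the constant $C(\gamma,p)$ on the nose, is the one computation I would carry out in full.

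With the shape of $m$ established, the trichotomy follows by comparing the horizontal level $\lambda B>0$ with the peak $m(t(u))$. For $\lambda\in(0,\lambda(u))$ the level lies strictly below the peak and $m(t)=\lambda B$ has exactly two roots $t^{+}<t(u)<t^{-}$; for $\lambda=\lambda(u)$ the level is tangent, giving the single root $t^{0}=t(u)$; for $\lambda\in(\lambda(u),\infty)$ there is no root, and since then $m(t)-\lambda B<0$ for all $t>0$ one gets $\phi^{'}_{\lambda,u}(t)=t^{-\gamma}(m(t)-\lambda B)<0$, so $\phi_{\lambda,u}$ is strictly decreasing, which is case $III)$. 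To fix the signs of $\phi^{''}_{\lambda,u}$ in the first two cases I would differentiate $\psi$ and use $\phi^{'}_{\lambda,u}=0$ at a critical point to obtain $\phi^{''}_{\lambda,u}(t)=t^{-\gamma}m^{'}(t)$; hence the sign of $\phi^{''}_{\lambda,u}(t)$ coincides with the sign of $m^{'}(t)$, which is positive on the increasing branch (so $\phi^{''}_{\lambda,u}(t^{+})>0$), negative on the decreasing branch (so $\phi^{''}_{\lambda,u}(t^{-})<0$), and zero at $t(u)$ (the degenerate inflection point). Matching these configurations against Proposition \ref{prop21} then delivers $I)$, $II)$, $III)$ respectively.

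The main obstacle is purely computational rather than conceptual: verifying that $m(t(u))/B$ reproduces the closed form $\lambda(u)$ in \eqref{E2}, constant included, and checking at the outset that $A,B,D$ are all strictly positive so that both the substitution $t\mapsto t^{\gamma}\phi^{'}_{\lambda,u}$ and the division by $B$ are legitimate. Everything else is elementary one-variable calculus of $m$; if one prefers not to cite Proposition \ref{prop21} verbatim, the monotonicity intervals claimed in case $I)$ can be read off directly from the sign of $m(t)-\lambda B$ across the two roots $t^{+}$ and $t^{-}$.
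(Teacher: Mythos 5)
Your proposal is correct, and it is essentially the argument the paper has in mind: the paper offers no written proof, stating the proposition as an immediate consequence of the definition of $\lambda(u)$ as the unique parameter value for which $\phi^{'}_{\lambda,u}$ and $\phi^{''}_{\lambda,u}$ vanish simultaneously, and your clearing of the singular term (studying $m(t)=At^{1+\gamma}-Dt^{p+\gamma}$ against the level $\lambda B$, whose peak value reproduces $\lambda(u)$ with the constant $C(\gamma,p)$) is precisely the elementary fiber-map verification being left to the reader. The identity $\phi^{''}_{\lambda,u}(t)=t^{-\gamma}m^{'}(t)$ at critical points correctly pins down the signs required in cases $I)$--$III)$, so the proof is complete as written.
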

Define
$$
\lambda_{\ast}=\displaystyle \inf_{u\in Z^{+}}\lambda(u).
$$
\begin{lemma}\label{L0} The function $\lambda$ defined in \eqref{E2} is continuous, $0$-homogeneous and unbounded from above. Moreover, $\lambda_{\ast}>0$ and there exists $u\in Z^{+}$ such that $\lambda_{\ast} = \lambda(u)$.
\end{lemma}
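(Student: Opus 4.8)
The plan is to prove the four assertions one at a time, relying throughout on the compact embeddings of Lemma \ref{lemma1} and on the continuity/dominated-convergence arguments already carried out in the proof of Lemma \ref{lemma2}.

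\emph{Continuity and $0$-homogeneity.} First I would observe that $Z^{+}$ is open in $X_{+}$, since $u\mapsto\int_{\mathbb{R}^{N}}b|u|^{p+1}$ is continuous by Lemma \ref{lemma1} and $b\in L^{\infty}$. On $Z^{+}$ the three functionals $u\mapsto\|u\|^{2}$, $u\mapsto\int b|u|^{p+1}$ and $u\mapsto\int a|u|^{1-\gamma}$ are continuous (the last two by exactly the H\"older and Lebesgue-dominated-convergence reasoning of Lemma \ref{lemma2}), and the denominator in \eqref{E2} is strictly positive there, because $\int a|u|^{1-\gamma}>0$ ($a>0$, $u\neq0$) and $\int b|u|^{p+1}>0$ on $Z^{+}$. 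Hence $\lambda$ is a quotient of continuous functions with nonvanishing denominator, so it is continuous. For the homogeneity I would substitute $su$ ($s>0$) into \eqref{E2} and collect the resulting power of $s$, namely $[\,2(p+\gamma)-(p+1)(1+\gamma)-(1-\gamma)(p-1)\,]/(p-1)$, which is $0$ by the elementary identity $2(p+\gamma)=(p+1)(1+\gamma)+(1-\gamma)(p-1)$; thus $\lambda(su)=\lambda(u)$.

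\emph{Positivity of $\lambda_{\ast}$.} Here I would bound the denominator of \eqref{E2} from above: by the embedding $X\hookrightarrow L^{p+1}$ one has $\int b|u|^{p+1}\le\|b\|_{\infty}\|u\|_{p+1}^{p+1}\le C\|u\|^{p+1}$, and by H\"older together with $a\in L^{2/(1+\gamma)}$ one has $\int a|u|^{1-\gamma}\le C\|u\|^{1-\gamma}$. Multiplying and using the same power identity, the denominator is $\le C(\|u\|^{2})^{(p+\gamma)/(p-1)}$, whence $\lambda(u)\ge C(\gamma,p)/C>0$ for every $u\in Z^{+}$, and therefore $\lambda_{\ast}>0$.

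\emph{Attainment.} I would apply the direct method. Take a minimizing sequence and, using the $0$-homogeneity, normalize $\|u_{n}\|=1$. Since $\lambda(u_{n})\to\lambda_{\ast}\in(0,\infty)$ and $\int a|u_{n}|^{1-\gamma}$ is bounded above, the denominator $(\int b|u_{n}|^{p+1})^{(1+\gamma)/(p-1)}\int a|u_{n}|^{1-\gamma}$ stays bounded away from $0$, which forces $\int b|u_{n}|^{p+1}\ge\delta>0$. Passing to a subsequence $u_{n}\rightharpoonup u$ in $X$, Lemma \ref{lemma1} gives strong $L^{q}$ convergence, so $\int a|u_{n}|^{1-\gamma}\to\int a|u|^{1-\gamma}$ and $\int b|u_{n}|^{p+1}\to\int b|u|^{p+1}\ge\delta>0$; hence $u\ge0$, $u\neq0$, and $u\in Z^{+}$. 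Weak lower semicontinuity of the norm gives $\|u\|^{2}\le1$, while the two denominators converge, so $\lambda(u)\le\lim\lambda(u_{n})=\lambda_{\ast}$; combined with $\lambda(u)\ge\lambda_{\ast}$ this yields $\lambda(u)=\lambda_{\ast}$.

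\emph{Unboundedness.} This is the delicate point. The mechanism is that $\lambda(u)\to\infty$ as $u\in Z^{+}$ approaches, after normalization, the set $\{\int b|u|^{p+1}=0\}$. I would fix $\phi_{0}\in Z^{+}$ (available since $b^{+}\neq0$) with $m=\int b\phi_{0}^{p+1}>0$ and take unit-norm bumps $\psi_{n}\in X_{+}$ whose supports are disjoint from $\phi_{0}$ and escape to infinity, so $\psi_{n}\rightharpoonup0$ and hence, by the compact embedding, $\int b|\psi_{n}|^{p+1}\to0$ and $\int a|\psi_{n}|^{1-\gamma}\to0$. Setting $u_{n}=\phi_{0}+t_{n}\psi_{n}$ with $t_{n}\to\infty$, the disjoint supports give $\|u_{n}\|^{2}=\|\phi_{0}\|^{2}+t_{n}^{2}$; choosing the location of $\psi_{n}$ far enough (for each $t_{n}$) keeps $\int b|u_{n}|^{p+1}$ in $(\tfrac{1}{2}m,\tfrac{3}{2}m)$ and $\int a|u_{n}|^{1-\gamma}$ bounded. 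Normalizing to $\hat u_{n}=u_{n}/\|u_{n}\|\in Z^{+}$ then yields $\int b|\hat u_{n}|^{p+1}\to0^{+}$ with $\int a|\hat u_{n}|^{1-\gamma}$ bounded, whence $\lambda(\hat u_{n})\to\infty$. I expect the bookkeeping that simultaneously keeps $\hat u_{n}\in Z^{+}$ and drives the denominator to zero (the diagonal choice of escape speed against $t_{n}$) to be the main obstacle, the other three assertions being routine consequences of Lemma \ref{lemma1} and weak lower semicontinuity.
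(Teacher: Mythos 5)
Your proposal is correct, and on the continuity/homogeneity, positivity, and attainment claims it runs close to the paper's own proof (which dismisses the first two as obvious and proves positivity by the same H\"older--Sobolev bound). The genuine differences are in the last two parts. For attainment, the paper argues by contradiction: if the normalized minimizing sequence did not converge strongly, weak lower semicontinuity of the norm would force $\lambda(u)<\liminf\lambda(u_n)=\lambda_{\ast}$, contradicting $\lambda\geq\lambda_{\ast}$ on $Z^{+}$; you instead use weak lower semicontinuity directly to get $\lambda(u)\leq\lambda_{\ast}$ and conclude without ever needing strong convergence. Your version is slightly more economical; the paper's yields strong convergence of minimizing sequences as a by-product, a pattern it reuses later (e.g.\ in Lemma \ref{L4} and Proposition \ref{PPP1}). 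For unboundedness the routes really diverge: the paper's argument is topological --- $F_b^{-1}((0,\infty))\cap S$ is open in $S$ and claimed to be a proper subset, so one can approach a boundary point where $F_b$ vanishes, collapsing the denominator of $\lambda$ while the numerator stays $1$ --- whereas you build an explicit escaping-bump sequence, with the translation radius chosen diagonally against $t_n$, achieving the same collapse ($\int b|\hat u_n|^{p+1}\to 0^{+}$ with $\int a|\hat u_n|^{1-\gamma}$ bounded). Your construction is in fact more robust: the paper's properness claim $F_b^{-1}((0,\infty))\cap S\neq S$ fails when $b>0$ a.e.\ in $\mathbb{R}^{N}$, a case the hypotheses of Theorem \ref{TP1} explicitly allow, while your argument is insensitive to the sign of $b$ far from the fixed bump. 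The one detail you should pin down is the choice of $\phi_0$: to split $\|u_n\|^{2}$ and the two integrals by disjointness of supports you need $\phi_0\in Z^{+}$ with bounded support, which is not automatic for an arbitrary element of $Z^{+}$ but does exist by a Lebesgue density-point argument using $b^{+}\neq 0$; with that one line added, your proof is complete.
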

\begin{proof} The continuity and $0$-homogeneity are obvious. From these properties, it follows that the rest of the proof can be done by considering $\lambda$ restricted to the set $Z^+\cap S$, where $S=\{u\in X:\ \|u||=1\}$. To prove that $\lambda$ is unbounded from above, first note that the functional $F_b:X\longrightarrow \mathbb{R}$ defined by $F_b(u)=\int_{\mathbb{R}^{N}}b|u|^{p+1}$ is continuous and therefore $F_b^{-1}((0,\infty))\cap S$ is a open set in $S$. Moreover, since $F_b(tu)=t^{p+1}F_b(u)$ for $t>0$, it follows that $F_b^{-1}((0,\infty))\cap S\neq S$ and therefore there exists a sequence $\left\{u_{n}\right\}\subset F_b^{-1}((0,\infty))\cap S$ such that $F_b(u_{n})\to 0$ in $X$. Consequently
	\begin{equation*}
	\lim_{n\to \infty}\lambda(u_n)=\lim_{n\to \infty}\frac{C(\gamma,p)}{\left[\int_{\mathbb{R}^{N}}b|u_{n}|^{p+1}\right]^{\frac{1+\gamma}{p-1}}\left[\int_{\mathbb{R}^{N}}a|u_{n}|^{1-\gamma}\right]}= \infty,
	\end{equation*}
	which proves that $\lambda$ is unbounded from above. Now observe that 
	$$\lambda_{\ast}=\displaystyle \inf_{u\in Z^{+}\cap S}\lambda(u)\geq cC(\gamma,p)\Vert a \Vert_{2/(1+\gamma)}^{-1}\Vert b \Vert_{\infty}^{-1}>0$$
	for some $c>0$. To end the proof,  
	take $\displaystyle \left\{u_{n}\right\}\subset Z^{+}\cap S$ such that $\lambda(u_{n})\rightarrow \lambda_{\ast}$. So, it follows from Lemma \ref{lemma1} that 
	$$
	\begin{array}{c}
	u_{n}\rightharpoonup u\in X, ~~
	u_{n}\rightarrow u ~\mbox{in}~ L^{q}(\mathbb{R}^{N})~\mbox{for each}~ q\in [2,2^{\ast})~~\mbox{and}~~
	u_{n}(x)\rightarrow u(x) ~\mbox{a.e. in}~\mathbb{R}^{N},
	\end{array}
	$$
	which lead us to infer that $u\not\equiv 0$. Otherwise, we would have 
	\begin{equation}
		\label{19}
		\lambda_{\ast}=\lim_{n\to \infty}\lambda(u_{n})=\lim_{n\to \infty}\frac{C(\gamma,p)}{\left[\int_{\mathbb{R}^{N}}b|u_{n}|^{p+1}\right]^{\frac{1+\gamma}{p-1}}\left[\int_{\mathbb{R}^{N}}a|u_{n}|^{1-\gamma}\right]}= \infty,
	\end{equation}
	which is an absurd. Let $v=\frac{u}{||u||}\in X_{+}\cap S$. If  $u_{n}\nrightarrow u$ in $X$, it would follow by  the weak lower semi-continuity of the norm that
	$$\lambda(v)=\lambda\left(\frac{u}{\|u\|}\right)=\lambda(u)<\liminf \lambda(u_{n})=\lambda_{\ast},$$
	but this is impossible. It follows that  $u\in Z^{+}\cap S$ and $\lambda(u)=\lambda_{\ast}$. This ends the proof. \fim
\end{proof} 

\newpage

 Proposition \ref{prop21} and Lemma \ref{L0} are described on the following pictures:

\begin{figure}[h]\label{F1}
	\centering
	\begin{tikzpicture}[scale=.35]
	\draw[thick, ->] (-1, -3.1) -- (9, -3.1);
	\draw[thick, ->] (0,-4) -- (0, 5);
	
	\draw[thick] (0.5, 5) .. controls (3.1, -6.2) and (3, 5) ..(4.2,4.2);
	\draw[thick] (4.2,4.2).. controls (8, -3.8) and (7, 1.5) ..(8,5);
	\draw (9,-3.1) node[below]{$D$};
	\draw (0, -3) node[below left]{$0$};
	\draw (-0.5,-0.5) node[above]{$\lambda_{\ast}$};
	\draw[thick] (0, 0) -- (8,0);
	\draw (-1,5.5) node[below]{$\lambda(u)$};
	\draw (3,-4) node[below]{\textrm{ $Z^{+}=\left\{u\in X_{+}:\int b\vert u\vert^{p+1}>0\right\}$}};
	\end{tikzpicture}
	\hspace{0.8cm}
	\begin{tikzpicture}[scale=.40]
	figura
	\draw[thick, ->] (-1, 0) -- (8, 0);
	\draw[thick, ->] (0, -1) -- (0, 7);
	\draw[thick] (0, 0) .. controls (3, -2) and (4, 0) ..(7,6.5);
	\draw (8,0) node[below]{$t$};
	\draw (0, 0) node[below left]{$0$};
	\draw (0,7) node[left]{$\phi_{\lambda,u}$};
	\draw (2,0) node[above]{$t^{+}_\lambda (u)$};
	\draw[thick, dotted] (2,0) -- (2,-0.8);
	\draw (3,-1) node[below]{\textrm{$\int b\vert u\vert ^{p+1}\leq 0$}};
	\end{tikzpicture}
	\hspace{0.8cm}
	\begin{tikzpicture}[scale=0.45]
	\draw[thick, ->] (-1, 0) -- (5, 0);
	\draw[thick, ->] (0, -1) -- (0, 6);
	\draw[thick] (0, 0) .. controls (1, -2) and (2, 1) ..(2,1);
	\draw[thick] (2, 1) .. controls (3, 3) and (4,-1) ..(4,-1);
	\draw[thick, dotted] (0.7,0) -- (0.7,-0.6);
	\draw (5,0) node[below]{$t$};
	\draw (0, 0) node[below left]{$0$};
	\draw (0.9,0) node[above]{$t^{+}_{\lambda}(u)$};
	\draw (2.5,0) node[below]{$t^{-}_{\lambda}(u)$};
	\draw[thick, dotted] (2.6,0) -- (2.6,1.5);
	\draw (0,6) node[left]{$\phi_{\lambda,u}$};
	\draw (3,-1) node[below]{\textrm{$\left\{(\lambda,u):\lambda<\lambda(u)\right\}$}};
	\end{tikzpicture}

\begin{tikzpicture}[scale=0.40]
	\draw[thick, ->] (-1, 4) -- (8, 4);
	\draw[thick, ->] (0, -1) -- (0, 7);
	\draw[thick] (0 ,4) .. controls (0.5, 3) and (3,3) .. (3, 3) ;
	\draw[thick] (3 ,3) .. controls (4, 3) and (5,2) .. (6, 1) ;
	\draw[thick, dotted] (3.2,4) -- (3.2, 3);
	\draw (8,4) node[below]{$t$};
	\draw (0, 4) node[below left]{$0$};
	\draw (0,7) node[left]{$\phi_{\lambda,u}$};
	\draw (3.2,4) node[above]{$t^{0}_{\lambda}(u)$};
	\draw (3,-1) node[below]{\textrm{$\left\{(\lambda,u):\lambda(u)=\lambda\right\}$}};
	\end{tikzpicture}	
	\hspace{0.8cm}
	\begin{tikzpicture}[scale=0.40]
	\draw[thick, ->] (-1, 4) -- (8, 4);
	\draw[thick, ->] (0, -1) -- (0, 7);
	\draw[thick] (0 ,4) .. controls (2.9,2.9) and (2.9,2.9) .. (6,-0.5);
	\draw (8,4) node[below]{$t$};
	\draw (0, 0) node[below left]{$0$};
	\draw (0,7) node[left]{$\phi_{\lambda,u}$};
	\draw (3,-1) node[below]{\textrm{$\left\{(\lambda,u):\lambda>\lambda(u)\right\}$}};
	\end{tikzpicture}
\end{figure}

From Proposition \ref{prop21} and Lemma \ref{L0} we obtain

\begin{lemma}\label{L1} For each $\lambda>0$ we have that $\mathcal{N}^{+}_{\lambda}, \mathcal{N}^{-}_{\lambda}\neq \emptyset$. Moreover:
	\begin{itemize}
		\item[$a)$] $\mathcal{N}^{0}_{\lambda}= \emptyset$ for $0<\lambda <\lambda_{\ast}$,
		\item[$b)$]  $\mathcal{N}^{0}_{\lambda}\neq \emptyset$ for $\lambda \geq\lambda_{\ast}$.
	\end{itemize}
\end{lemma}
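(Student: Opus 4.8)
The plan is to translate every assertion about $\mathcal{N}^{+}_{\lambda},\mathcal{N}^{-}_{\lambda},\mathcal{N}^{0}_{\lambda}$ into a statement about the shape of the fibers $\phi_{\lambda,u}$, and then to read that shape off from the position of $\lambda$ relative to the threshold $\lambda(u)$ via Proposition \ref{lambdava}. The elementary but crucial device is the scaling identity $\phi_{\lambda,t_{0}u}(s)=\phi_{\lambda,u}(t_{0}s)$, which gives $\phi^{'}_{\lambda,t_{0}u}(1)=t_{0}\phi^{'}_{\lambda,u}(t_{0})$ and $\phi^{''}_{\lambda,t_{0}u}(1)=t_{0}^{2}\phi^{''}_{\lambda,u}(t_{0})$; hence whenever $t_{0}$ is a critical point of $\phi_{\lambda,u}$ of a prescribed type (local minimum, local maximum or inflection), the rescaled function $t_{0}u$ lies in $\mathcal{N}^{+}_{\lambda}$, $\mathcal{N}^{-}_{\lambda}$ or $\mathcal{N}^{0}_{\lambda}$ respectively. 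Thus producing elements of these sets amounts to producing fibers with critical points of the desired type.

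For the non-emptiness of $\mathcal{N}^{+}_{\lambda}$ and $\mathcal{N}^{-}_{\lambda}$, I fix $\lambda>0$ and use that $\lambda(\cdot)$ is unbounded from above on $Z^{+}$ (Lemma \ref{L0}) to choose $u\in Z^{+}$ with $\lambda(u)>\lambda$. Then $\lambda\in(0,\lambda(u))$, so Proposition \ref{lambdava} puts $\phi_{\lambda,u}$ in case $(I)$ of Proposition \ref{prop21}, which supplies a local minimum at $t^{+}_{\lambda}(u)$ and a local maximum at $t^{-}_{\lambda}(u)$; by the scaling device $t^{+}_{\lambda}(u)u\in\mathcal{N}^{+}_{\lambda}$ and $t^{-}_{\lambda}(u)u\in\mathcal{N}^{-}_{\lambda}$. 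For part $(a)$, I argue by contradiction: if $u\in\mathcal{N}^{0}_{\lambda}$ then $\int_{\mathbb{R}^{N}}b|u|^{p+1}>0$, i.e. $u\in Z^{+}$, and $\phi^{'}_{\lambda,u}(1)=\phi^{''}_{\lambda,u}(1)=0$, so the pair $(1,\lambda)$ solves the system defining $(t(u),\lambda(u))$ in \eqref{E2}. By the uniqueness of that solution I obtain $\lambda=\lambda(u)\geq\inf_{v\in Z^{+}}\lambda(v)=\lambda_{\ast}$, so no such $u$ exists when $\lambda<\lambda_{\ast}$.

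For part $(b)$, the same computation shows that $\mathcal{N}^{0}_{\lambda}\neq\emptyset$ \emph{if and only if} $\lambda$ lies in the range of $\lambda(\cdot)$ on $Z^{+}$: indeed, if $\lambda=\lambda(u)$ then $\phi_{\lambda,u}$ is in case $(II)$ of Proposition \ref{prop21} and its inflection point $t^{0}_{\lambda}(u)$ rescales to give $t^{0}_{\lambda}(u)u\in\mathcal{N}^{0}_{\lambda}$. So it suffices to prove that this range equals $[\lambda_{\ast},\infty)$. By Lemma \ref{L0} the value $\lambda_{\ast}$ is attained at some $u_{\ast}\in Z^{+}$ and $\lambda(\cdot)$ is continuous and unbounded above, so only the intermediate values remain, and I plan to reach each target $\lambda>\lambda_{\ast}$ by the intermediate value theorem along a continuous path in $Z^{+}$. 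If $b>0$ a.e., then $Z^{+}=X_{+}$ is convex, and I simply join $u_{\ast}$ by a segment to some $u_{1}\in Z^{+}$ with $\lambda(u_{1})>\lambda$. If $|\{b<0\}|>0$, I pick $\psi\in X_{+}$ with $\int_{\mathbb{R}^{N}}b\psi^{p+1}<0$, set $w_{s}=(1-s)u_{\ast}+s\psi$, and let $s_{0}\in(0,1)$ be the first zero of the continuous map $s\mapsto\int_{\mathbb{R}^{N}}bw_{s}^{p+1}$; then $w_{s}\in Z^{+}$ for $s\in[0,s_{0})$, with $\lambda(w_{0})=\lambda_{\ast}$ and $\lambda(w_{s})\to+\infty$ as $s\to s_{0}^{-}$, because $\int bw_{s}^{p+1}\to 0^{+}$ while $\|w_{s}\|$ and $\int aw_{s}^{1-\gamma}$ stay bounded and bounded away from $0$.

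The main obstacle is exactly this last step: controlling the entire range of $\lambda(\cdot)$, i.e. producing enough connectedness inside $Z^{+}$ to run the intermediate value argument all the way to $+\infty$. The delicate point is that $Z^{+}$ is only an \emph{open} piece of the cone $X_{+}$ singled out by the sign of $\int b|u|^{p+1}$, so arbitrary segments need not remain in $Z^{+}$; the two-case path above is designed precisely to stay inside it. The one remaining technical nuisance is the existence of an admissible $\psi\in X_{+}$ with $\int_{\mathbb{R}^{N}}b\psi^{p+1}<0$ when $\{b<0\}$ has positive measure but possibly empty interior, which I would settle by mollifying the indicator of a bounded subset of $\{b<0\}$ to obtain a nonnegative $C_{c}^{\infty}\subset X$ function with the required negative integral.
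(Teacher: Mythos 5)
Your proof of the non-emptiness claim and of part $a)$ coincides with the paper's: both produce elements of $\mathcal{N}^{\pm}_{\lambda}$ by choosing, via the unboundedness in Lemma \ref{L0}, some $u\in Z^{+}$ with $\lambda(u)>\lambda$ and rescaling the two critical points supplied by Propositions \ref{prop21} and \ref{lambdava}; and both prove $a)$ by observing that any $u\in\mathcal{N}^{0}_{\lambda}$ lies in $Z^{+}$ and forces $\lambda=\lambda(u)\geq\lambda_{\ast}$ (you phrase this via uniqueness of the solution $(t(u),\lambda(u))$ of the system in \eqref{E2}, the paper via Proposition \ref{lambdava}; it is the same computation). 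The genuine divergence is in part $b)$. The paper is laconic there: it asserts that ``from Lemma \ref{L0}'' every $\lambda\geq\lambda_{\ast}$ equals $\lambda(u)$ for some $u\in Z^{+}$. But Lemma \ref{L0} states only continuity, $0$-homogeneity, unboundedness from above, and attainment of the infimum; the surjectivity of $\lambda(\cdot)$ onto $[\lambda_{\ast},\infty)$ tacitly rests on an intermediate value argument, hence on connectedness of (a suitable part of) $Z^{+}$, which the paper never addresses and which is not obvious when $b$ changes sign. You identified exactly this obstacle and filled it: your path $w_{s}=(1-s)u_{\ast}+s\psi$, stopped at the first zero $s_{0}$ of $s\mapsto\int b w_{s}^{p+1}$, stays in $Z^{+}$, starts at the minimizer, and along it $\lambda(w_{s})\to+\infty$ because $\int b w_{s}^{p+1}\to0^{+}$ while $\|w_{s}\|$ and $\int a w_{s}^{1-\gamma}$ remain bounded and bounded away from $0$ (your pointwise bounds $w_{s}\geq(1-s)u_{\ast}$ and $w_{s}\geq s\psi$ do justify this, since $a>0$ and $V>0$). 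On this point your proof is more complete than the paper's own.

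One defect remains: your dichotomy ``$b>0$ a.e.'' versus ``$|\{b<0\}|>0$'' is not exhaustive under the standing hypotheses, which require only $b^{+}\neq 0$. If $b\geq0$ a.e. but $|\{b=0\}|>0$, then $Z^{+}\neq X_{+}$ (any $u$ supported in $\{b=0\}$ is excluded), so your first case does not apply, and no admissible $\psi$ with $\int b\psi^{p+1}<0$ exists, so your second case cannot even start. The repair is immediate and uses only your own estimates: when $b\geq0$ a.e., $Z^{+}$ is convex, since for $u,v\in Z^{+}$ and $w_{s}=(1-s)u+sv$ one has $b\,w_{s}^{p+1}\geq(1-s)^{p+1}b\,u^{p+1}$ pointwise, hence $\int b w_{s}^{p+1}>0$ for all $s\in[0,1)$; the segment-plus-intermediate-value argument of your first case then applies verbatim. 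With this correction the proposal is a valid proof of the lemma.
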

\begin{proof}
 First we will to prove that $\mathcal{N}^{+}_{\lambda}, \mathcal{N}^{-}_{\lambda}\neq \emptyset$. By Lemma \ref{L0} for each $\lambda>0$ there exists $u\in Z^{+}$ such that $\lambda <\lambda(u)$. Thus by Proposition \ref{lambdava} there exists $t^{+}_{\lambda}(u)< t^{-}_{\lambda}(u)$ such that $t^{+}_{\lambda}(u)u\in \mathcal{N}^{+}_{\lambda}$ and $t^{-}_{\lambda}(u)u\in \mathcal{N}^{-}_{\lambda}$. Hence $\mathcal{N}^{+}_{\lambda}\neq \emptyset, \mathcal{N}^{-}_{\lambda}\neq \emptyset$.

To prove $a)$ we first note that if $u\in Z^{+}$ then from Lemma \ref{L0} there holds $\lambda(u)\geq \lambda_{\ast}$. Hence, if $\lambda \in (0,\lambda_{\ast})$ it follows from Proposition \ref{lambdava} that $u\notin \mathcal{N}^{0}_{\lambda}$. If $u\notin Z^+$, then $\int_{\mathbb{R}^{N}}b|u|^{p+1}\leq 0$ and by Proposition \ref{prop21}, $\phi_{\lambda,u}$ has only one critical point at $t^{+}_{\lambda}(u)\in (0,\infty)$, which satisfies $\phi^{''}_{\lambda,u}(t_{\lambda}^{+}(u))>0$ which implies again that $u\notin \mathcal{N}^{0}_{\lambda}$. Therefore $\mathcal{N}^{0}_{\lambda}= \emptyset$ for $0<\lambda <\lambda_{{\ast}}$.

Now we prove $b)$. Indeed, from the definition of $\lambda(u)$ we know that 
\begin{equation*}
t(u)u\in \mathcal{N}_{\lambda(u)}^0.
\end{equation*}
From Lemma \ref{L0} we know that for each $\lambda\ge \lambda^*$, there exits $u\in Z^+$ such that $\lambda(u)=\lambda$ which ends the proof.

\fim
	\end{proof}
 
Now we characterize the Nehari set $\mathcal{N}_{\lambda_\ast}^0$. Note that the singular term forces the non-differentiability of the function $\lambda(u)$ at some points, however, at the global minimum points we prove that it has null derivative. 
\begin{lemma}\label{L2} There holds
	\begin{equation}
		\label{109}
		\mathcal{N}^{0}_{\lambda_{\ast}}=\displaystyle \left\{u\in \mathcal{N}_{\lambda_{\ast}}:\int_{\mathbb{R}^{N}}b|u|^{p+1}> 0,\lambda(u)=\lambda_{\ast}\right\},
	\end{equation}
	and
	\begin{equation}\label{N1}
		(u,\psi)-(p+1)\displaystyle \int_{\mathbb{R}^{N}}b(x)u^{p}\psi dx-(1-\gamma)\lambda_{\ast}\displaystyle \int_{\mathbb{R}^{N}}a(x)u^{-\gamma}\psi dx=0,~\forall \psi \in X,
	\end{equation}
	holds for each $u\in \mathcal{N}^{0}_{\lambda_{\ast}}$ given.
\end{lemma}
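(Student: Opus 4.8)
The plan is to prove the two displayed assertions separately: \eqref{109} from the uniqueness of the solution $(t(u),\lambda(u))$ of the system recorded in \eqref{E2}, and \eqref{N1} from the fact that every $u\in\mathcal N^0_{\lambda_\ast}$ is a global minimizer of the $0$-homogeneous map $\lambda(\cdot)$.

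For \eqref{109} I would first observe that $\mathcal N^0_\lambda\subset Z^+$ for every $\lambda>0$: if $u\in\mathcal N^0_\lambda$ had $\int b|u|^{p+1}\le 0$, then Proposition \ref{prop21} would give $\phi_{\lambda,u}$ a single critical point, at which $\phi''_{\lambda,u}>0$, contradicting $\phi''_{\lambda,u}(1)=0$. Now if $u\in\mathcal N^0_{\lambda_\ast}$, then $t=1$ solves $\phi'_{\lambda_\ast,u}(t)=\phi''_{\lambda_\ast,u}(t)=0$, so $(1,\lambda_\ast)$ is a solution of the system whose only solution is $(t(u),\lambda(u))$; hence $\lambda(u)=\lambda_\ast$, which gives the inclusion $\subseteq$. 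Conversely, if $u\in\mathcal N_{\lambda_\ast}\cap Z^+$ with $\lambda(u)=\lambda_\ast$, then Proposition \ref{lambdava} puts $\phi_{\lambda_\ast,u}=\phi_{\lambda(u),u}$ in case $(II)$, whose unique critical point is an inflection point; since $u\in\mathcal N_{\lambda_\ast}$ forces $t=1$ to be that critical point, $\phi''_{\lambda_\ast,u}(1)=0$ and $u\in\mathcal N^0_{\lambda_\ast}$, which gives $\supseteq$.

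For \eqref{N1}, set $A(u)=\|u\|^2$, $B(u)=\int b|u|^{p+1}$ and $E(u)=\int a|u|^{1-\gamma}$, so that $\lambda=C(\gamma,p)\,A^{\frac{p+\gamma}{p-1}}B^{-\frac{1+\gamma}{p-1}}E^{-1}$ on $Z^+$. By \eqref{109} together with Lemma \ref{L0}, $u$ minimizes $\lambda$ over $Z^+$, so \eqref{N1} should be the Euler--Lagrange equation $\lambda'(u)\psi=0$. The functionals $A$ and $B$ are of class $C^1$ with $A'(u)\psi=2(u,\psi)$ and $B'(u)\psi=(p+1)\int bu^p\psi$, and from $\phi'_{\lambda_\ast,u}(1)=\phi''_{\lambda_\ast,u}(1)=0$ one extracts the relations $B(u)=\frac{1+\gamma}{p-1}\lambda_\ast E(u)$ and $A(u)=\frac{p+\gamma}{p-1}\lambda_\ast E(u)$; substituting them after differentiating $\lambda$ collapses all the prefactors and reduces $\lambda'(u)\psi=0$ to \eqref{N1}, \emph{once} the derivative $E'(u)\psi=(1-\gamma)\int au^{-\gamma}\psi$ has been justified.

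The hard part is precisely this justification, since the singular term destroys the differentiability of $E$ and, a priori, $\int au^{-\gamma}\psi$ need not even be finite; here the minimality of $u$ is indispensable. First, for $0\le\psi\in X$ and $s>0$ small one has $u+s\psi\in Z^+$, and the concavity of $t\mapsto t^{1-\gamma}$ makes the quotients $\frac1s(|u+s\psi|^{1-\gamma}-u^{1-\gamma})$ increase pointwise to $(1-\gamma)u^{-\gamma}\psi$ as $s\downarrow 0$, so by monotone convergence the right derivative $D^+E(u;\psi)=(1-\gamma)\int au^{-\gamma}\psi$ exists in $(0,+\infty]$. Since $A,B$ are smooth, $D^+\lambda(u;\psi)$ then exists and, by minimality, is $\ge 0$; as the only possibly infinite term in it is $-D^+E(u;\psi)/E(u)$, this inequality forces $\int au^{-\gamma}\psi<\infty$. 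Applying this to $\psi^+$ and $\psi^-$ yields $au^{-\gamma}|\psi|\in L^1(\mathbb{R}^N)$ for every $\psi\in X$; with this integrability in hand, the tangent-line bounds coming from concavity furnish an integrable dominating function and dominated convergence upgrades the one-sided limits into the genuine G\^ateaux derivative $E'(u)\psi=(1-\gamma)\int au^{-\gamma}\psi$. Then $\lambda$ is G\^ateaux differentiable at $u$, minimality gives $\lambda'(u)\psi=0$ for all $\psi\in X$, and the computation above produces \eqref{N1}. I expect the main obstacle to be exactly the control of the singular term: deducing the integrability $\int au^{-\gamma}|\psi|<\infty$ from minimality, and then securing an integrable domination near the set where $u$ is small, which is the delicate point in passing from the one-sided derivatives to a two-sided one.
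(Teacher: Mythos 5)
Your proof of \eqref{109} is correct and fleshes out exactly what the paper compresses into ``a consequence of Lemma \ref{L0}'': uniqueness of the solution $(t(u),\lambda(u))$ of the system gives the inclusion $\subseteq$, and Proposition \ref{lambdava} gives $\supseteq$. Your treatment of the singular term is also sound and runs parallel to the paper's Step 1: you get the one-sided derivative of $E(u)=\int a|u|^{1-\gamma}$ by monotone convergence (the paper uses Fatou's lemma for the same purpose), and you use minimality of $\lambda(\cdot)$ to force $\int au^{-\gamma}\psi<\infty$ for $\psi\in X_+$, hence $u>0$ a.e. Likewise, your algebra is the paper's Step 2: the relations $B(u)=\frac{1+\gamma}{p-1}\lambda_\ast E(u)$, $A(u)=\frac{p+\gamma}{p-1}\lambda_\ast E(u)$ collapse the Euler--Lagrange expression correctly. (Minor remark: carried out, your computation yields $2(u,\psi)-(p+1)\int bu^{p}\psi-(1-\gamma)\lambda_\ast\int au^{-\gamma}\psi=0$, with a factor $2$, exactly as in the paper's Steps 2--3; the single $(u,\psi)$ in the displayed \eqref{N1} is a typo in the statement, as one checks against Corollary \ref{C1}.)

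The genuine gap is the final sentence, ``minimality gives $\lambda'(u)\psi=0$ for all $\psi\in X$.'' The minimality you actually have is over the cone $Z^+\subset X_+$ of nonnegative functions, and this cone has empty interior in $X$: for sign-changing $\psi$, and even for $\psi\in X_+$ with $s<0$, the perturbation $u+s\psi$ leaves the constraint set. So the first-order condition delivers only the one-sided inequality $\lambda'(u)\psi\ge 0$ for $\psi\in X_+$ --- which is precisely the paper's inequality \eqref{111} --- and not the equality \eqref{N1} for arbitrary $\psi\in X$. Bridging that gap is the whole content of the paper's Step 3, done via the Graham--Eagle truncation: test the inequality with $\Psi=(u+\epsilon\psi)^{+}$, split $\mathbb{R}^N$ into $\{u+\epsilon\psi>0\}$ and $\{u+\epsilon\psi\le 0\}$, let $\epsilon\downarrow0$, then replace $\psi$ by $-\psi$. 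Your route admits a cheaper repair: since $\|\,|v|\,\|=\|v\|$ and all other functionals depend only on $|v|$, one has $\lambda(v)=\lambda(|v|)\ge\lambda_\ast$ for every $v$ in the \emph{open} set $\{v\in X:\int b|v|^{p+1}>0\}$, so $u$ is in fact an interior minimizer and the unconstrained Euler--Lagrange equation becomes legitimate (your concavity/domination scheme for the two-sided derivative of $E$ then goes through, with some care on the set where $u+s\psi<0$). But some such ingredient --- evenness of $\lambda$ or the truncation trick --- must be supplied; as written, the deduction from cone-constrained minimality to \eqref{N1} fails.
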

\begin{proof} The characterization of $\mathcal{N}^{0}_{\lambda_{\ast}}$ is a consequence of Lemma \ref{L0}. Let us prove (\ref{N1}) by splitting the proof in three steps. First, let rewrite the function $\lambda(u)$ as $\lambda(u)=C(\gamma,p)f(u)g(u)$, where
	$$f(u)=\frac{1}{\int_{\mathbb{R}^{N}}a|u|^{1-\gamma}dx}~~\mbox{and}~~g(u)=\frac{\left(||u||^{2}\right)^{\frac{p+\gamma}{p-1}}}{\left[\int_{\mathbb{R}^{N}}b|u|^{p+1}\right]^{\frac{1+\gamma}{p-1}}}.$$
	\noindent\textbf{Step.1.} $\langle f^\prime(u),\psi \rangle$ there exists for all $\psi \in X_{+}$ and for all $u\in \mathcal{N}^{0}_{\lambda_{\ast}}$.
	
	In fact, for such $u,\psi $ given, it follows by continuity that  $\int_{\mathbb{R}^{N}}b|u+t\psi|^{p+1}>0$ for  $t>0$ small enough. Therefore $g(u+t\psi)$ is well defined for $t>0$ small enough and $\langle  g^{\prime}(u), \psi \rangle $ there exists. Since, $u$ is the minimum point for $\lambda(u)$, we have that
	$\lambda(u+t\psi)-\lambda(u)=\lambda(u+t\psi)-\lambda_{\ast}\geq 0,~\forall ~t\geq 0$ enough small, that  implies 
	$$
	(g(u+t\psi)-g(u))f(u+t\psi)\geq -g(u)(f(u+t\psi)-f(u)).
	$$
	
	Since, 
	$$f(u+t\psi)-f(u)=-h(t)^{-2}\left[\int_{\mathbb{R}^{N}}a|u+t\psi|^{1-\gamma} dx-\int_{\mathbb{R}^{N}}a|u|^{1-\gamma} dx\right],$$
	where 
	$$h(t)=\theta(t)\int_{\mathbb{R}^{N}}a|u+t\psi|^{1-\gamma} dx+(1-\theta(t))\int_{\mathbb{R}^{N}}a|u|^{1-\gamma} dx,~\theta(t)\in [0,1],$$ 
	is a measurable function such that $h(t)\rightarrow h(0)=\int_{\mathbb{R}^{N}}a|u|^{1-\gamma} dx\neq 0$ with $t\rightarrow 0^{+}$, it follows from Fatou's lemma, that
	\begin{align*}
		\infty> \langle  g^{\prime}(u), \psi \rangle f(u)\geq g(0)
		\left[\int_{\mathbb{R}^{N}}a|u|^{1-\gamma} dx\right]^{-2}\displaystyle \liminf_{t\to 0^{+}} \int_{\mathbb{R}^{N}}\frac{a|u+t\psi|^{1-\gamma}-a|u|^{1-\gamma}}{t}  & \\
		\geq g(0) \left[\int_{\mathbb{R}^{N}}a|u|^{1-\gamma} dx\right]^{-2}(1-\gamma)\int_{\mathbb{R}^{N}}aG(x)\psi dx, ~~~~~~~~~~~~~~~& 
	\end{align*}
	where
	$$
	G(x)=\left\{
	\begin{array}{rcl}
	u^{-\gamma}(x), & \mbox{if} & u(x)\neq 0 ,\\
	\infty, & \mbox{if} & u(x)=0.
	\end{array}
	\right.
	$$
	
	So, by taking $\psi>0,~\psi\in X$ above, we obtain that $G(x)=u^{-\gamma}(x)$ for all $x\in \mathbb{R}^{N}$, that is, $u>0$ in $\mathbb{R}^{N}$. This implies that $0< \int_{\mathbb{R}^{N}}au^{-\gamma} \psi dx<\infty$ for all $\psi\in X_{+}$. As a consequence, we have $\langle  j^{\prime}(u), \psi \rangle$ there exists, where $j(u)=\int_{\mathbb{R}^{N}}a|u|^{1-\gamma} dx,~\psi\in X_{+}$. To end the proof, we just note that $f(u)=\left[j(u)\right]^{-1}$ and hence
	$$\langle  f^{\prime}(u), \psi \rangle=-(1-\gamma)\left[\int_{\mathbb{R}^{N}}a|u|^{1-\gamma} dx\right]^{-2}\int_{\mathbb{R}^{N}}au^{-\gamma} \psi dx$$
	holds. 
	
	Before proving (\ref{N1}), let us prove the Step 2 by assuming without loss of generality that $||u||=1$.
	
	\noindent\textbf{Step.2.}  There holds
	\begin{equation}
		\label{111}
		2(u,\psi)-(p+1)\displaystyle \int_{\mathbb{R}^{N}}b(x)u^{p}\psi dx-(1-\gamma)\lambda_{\ast}\displaystyle \int_{\mathbb{R}^{N}}a(x)u^{-\gamma}\psi dx\geq 0,~\forall \psi\in X_{+}.
	\end{equation}
	
	Indeed, since $u \in X$ is minimum point of $\lambda(u)$ such that $\int b\vert u \vert^{p+1}>0$, we have
	\begin{align}\label{112}
		\left\{\frac{\left(\frac{2(p+\gamma)}{p-1}\right)(u,\psi)\displaystyle \left[F(u)\right]^{\frac{1+\gamma}{p-1}}-(p+1)\left(\frac{1+\gamma}{p-1}\right)\left[F(u)\right]^{\frac{2+\gamma-p}{p-1}}\displaystyle \int_{\mathbb{R}^{N}}b(x)u^{p}\psi dx}{\left[F(u)\right]^{\frac{2(1+\gamma)}{p-1}}}\right\} \left[H(u)\right]^{-1} 
		\\-(1-\gamma)\displaystyle\frac{\left[H(u)\right]^{-2}\left[\displaystyle\int_{\mathbb{R}^{N}}a(x)u^{-\gamma}\psi dx\right]}{\left[F(u)\right]^{\frac{1+\gamma}{p-1}}} \geq 0,\nonumber
	\end{align}
	for all $\psi\in X_{+}$, where
	\begin{equation}
		\label{115}
		F(u)=\int_{\mathbb{R}^{N}}b(x)u^{p+1}dx~\mbox{and}~ H(u)=\int_{\mathbb{R}^{N}}au^{1-\gamma}dx.
	\end{equation}
	
	Once using that $u\in \mathcal{N}^{0}_{\lambda_{\ast}}$, we are able to infer that
	$$ H(u)=\int_{\mathbb{R}^{N}}au^{1-\gamma}dx=\frac{p-1}{\lambda_{\ast}(p+\gamma)}~\mbox{and}~ F(u)=\int_{\mathbb{R}^{N}}b(x)u^{p+1}dx=\frac{1+\gamma}{p+\gamma}.$$
	
	Thus, by using these expressions in (\ref{112}), we get (\ref{111}) after some manipulations.
	
	Finely, by using the characterization (\ref{109}) and adjusting an argument from Graham-Eagle \cite{GE}, we are able to show the equality (\ref{N1}).
	
	\noindent\textbf{Step.3.} There holds
	$$2(u,\psi)-(p+1)\displaystyle \int_{\mathbb{R}^{N}}b(x)u^{p}\psi dx-(1-\gamma)\lambda_{\ast}\displaystyle \int_{\mathbb{R}^{N}}a(x)u^{-\gamma}\psi dx=0,~\forall~ \psi\in X.$$
	To do this, let us set $\Psi:=(u+\epsilon \psi)^{+}\in X_{+}$ for $\epsilon>0$. Since (\ref{111}) holds, it follows from splitting the whole space in $\left\{u+\epsilon \psi>0\right\}$ and $\left\{u+\epsilon \psi\leq 0\right\}$, that
	\begin{align}
		\label{118}
		0\leq 2(u,\Psi)-(p+1)\displaystyle \int_{\mathbb{R}^{N}}b(x)u^{p}\Psi dx-(1-\gamma)\lambda_{\ast}\displaystyle \int_{\mathbb{R}^{N}}a(x)u^{-\gamma}\Psi dx& \nonumber\\
		=2||u||^{2}-(p+1)\displaystyle \int_{\mathbb{R}^{N}}b(x)u^{p+1}dx-(1-\gamma)\lambda_{\ast}\displaystyle \int_{\mathbb{R}^{N}}a(x)u^{1-\gamma}dx& \\
		+\epsilon\left[\int_{\mathbb{R}^{N}}2(\nabla u\nabla \psi+V(x)u \psi)-(p+1)b(x)u^{p}\psi-(1-\gamma)\lambda_{\ast}a(x)u^{-\gamma} \psi)\right]\nonumber\\
		-2\int_{\left\{u+\epsilon \psi\leq 0\right\}}(|\nabla u|^{2}+V(x)u^{2})+(p+1)\int_{\left\{u+\epsilon \psi\leq 0\right\}}b(x)u^{p}(u+\epsilon \psi)\nonumber\\
		+(1-\gamma)\lambda_{\ast}\int_{\left\{u+\epsilon \psi\leq 0\right\}}au^{-\gamma}(u+\epsilon \psi)dx -2\epsilon \int_{\left\{u+\epsilon \psi\leq 0\right\}}(\nabla u\nabla \psi+V(x)u \psi). \nonumber
	\end{align} 	
	
	Now, by using $0< \gamma <1$ and again splitting $\left\{u+\epsilon \psi\leq 0\right\}$ in $\left\{u+\epsilon \psi\leq 0\right\} \cap \{b<0\}$ and $\left\{u+\epsilon \psi\leq 0\right\} \cap \{b\geq0\}$, we obtain
	\begin{align}\label{114}	
		0&\leq  2(u,\psi)-(p+1)\displaystyle \int_{\mathbb{R}^{N}}b(x)u^{p}\psi dx-(1-\gamma)\lambda_{\ast}\displaystyle \int_{\mathbb{R}^{N}}a(x)u^{-\gamma}\psi dx\nonumber\\
		&	\leq \epsilon\left[\int_{\mathbb{R}^{N}}2(\nabla u\nabla \psi+V(x)u \psi)-(p+1)b(x)u^{p}\psi-(1-\gamma)\lambda_{\ast}a(x)u^{-\gamma} \psi)\right]\\
		&-2\epsilon \int_{\left\{u+\epsilon \psi\leq 0\right\}}(\nabla u\nabla \psi+V(x)u \psi)+\epsilon (p+1)\int_{\left\{u+\epsilon \psi\leq 0,\left\{b<0\right\}\right\}}b(x)u^{p} \psi.\nonumber
	\end{align}
	
	Since the measure of the domains  of integration $\left\{u+\epsilon \psi\leq 0\right\}$ and $\left\{u+\epsilon \psi\leq 0\right\} \cap \{b<0\}$  tends to zero as $\epsilon\rightarrow 0$, we have
	from (\ref{114}) that
	
	\begin{align*}	
		0\leq & \int_{\mathbb{R}^{N}}(2(\nabla u\nabla \psi+V(x)u \psi)-(p+1)b(x)u^{p}\psi-(1-\gamma)\lambda_{\ast}a(x)u^{-\gamma} \psi)\\
		=&2(u,\psi)-(p+1)\displaystyle \int_{\mathbb{R}^{N}}b(x)u^{p}\psi dx-(1-\gamma)\lambda_{\ast}\displaystyle \int_{\mathbb{R}^{N}}a(x)u^{-\gamma}\psi dx
	\end{align*}
	holds. So, the equality is a consequence of taking  $-\psi$ in the above inequality. This ends the proof.
\end{proof}
\fim
\vspace{0.4cm}

The following result will be very important to show multiplicity of solutions to problem (\ref{pq})  at  $\lambda=\lambda_{\ast}$ and in particular it shows that these solutions belongs to $\mathcal{N}_{\lambda_{\ast}}^{-}$ and $\mathcal{N}_{\lambda_{\ast}}^{+}$, respectively.
\begin{corollary}\label{C1} The problem $(P_{\lambda_{\ast}})$ has no solution $u_{\lambda_{\ast}} \in \mathcal{N}_{\lambda_{\ast}}^{0}$.
	
\end{corollary}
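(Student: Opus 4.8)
The plan is to argue by contradiction. Suppose $u_{\lambda_\ast}\in\mathcal{N}^{0}_{\lambda_\ast}$ is a solution of $(P_{\lambda_\ast})$, and write $u=u_{\lambda_\ast}$ for short. Being a solution, $u$ satisfies the weak formulation $(u,\psi)=\lambda_\ast\int_{\mathbb{R}^N} a u^{-\gamma}\psi\,dx+\int_{\mathbb{R}^N} b u^{p}\psi\,dx$ for every $\psi\in X$; being an element of $\mathcal{N}^{0}_{\lambda_\ast}$, it also satisfies the identity \eqref{N1} of Lemma \ref{L2} for every $\psi\in X$. Both relations are linear in the three quantities $(u,\psi)$, $\int_{\mathbb{R}^N}a u^{-\gamma}\psi$ and $\int_{\mathbb{R}^N}b u^{p}\psi$, so the first step is to take the linear combination that cancels the $(u,\psi)$ term. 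This produces a relation of the form $c_{1}\lambda_\ast\int_{\mathbb{R}^N} a u^{-\gamma}\psi\,dx=c_{2}\int_{\mathbb{R}^N} b u^{p}\psi\,dx$ for all $\psi\in X$, where $c_{1},c_{2}$ are fixed constants depending only on $\gamma,p$; their exact values are irrelevant, what matters is that both are strictly positive.

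Next I would upgrade this weak identity to a pointwise one. Since it holds for every $\psi\in X$, and in particular for every $\psi\in C_{c}^{\infty}(\mathbb{R}^{N})$, and since both $a u^{-\gamma}$ and $b u^{p}$ are locally integrable (the finiteness of $\int_{\mathbb{R}^N}a u^{-\gamma}\psi$ for $\psi\in X_{+}$ being part of Step 1 of Lemma \ref{L2}), the fundamental lemma of the calculus of variations yields the a.e.\ equality $c_{1}\lambda_\ast\, a u^{-\gamma}=c_{2}\, b u^{p}$ in $\mathbb{R}^{N}$. Recall from Lemma \ref{L2} that every $u\in\mathcal{N}^{0}_{\lambda_\ast}$ satisfies $u>0$ a.e. Hence, using $a>0$ and $\lambda_\ast>0$, the left-hand side is strictly positive a.e., which forces $b u^{p}>0$, i.e.\ $b>0$ a.e.\ in $\mathbb{R}^{N}$. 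In particular, if $b$ were negative on a set of positive measure we would already reach a contradiction, so the sign-changing case is dispatched immediately.

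It then remains to rule out the surviving case $b>0$ a.e. Solving the pointwise identity for $u$ gives $u^{p+\gamma}=\tfrac{c_{1}\lambda_\ast}{c_{2}}\,\tfrac{a}{b}$, that is $u=\kappa\,[a/b]^{1/(p+\gamma)}$ for a positive constant $\kappa$. Since $u\in X$, this would force $[a/b]^{1/(p+\gamma)}\in X$, contradicting the standing hypothesis of Theorem \ref{TP1} that $[a/b]^{1/(p+\gamma)}\notin X$ when $b>0$ in $\mathbb{R}^{N}$. Either way we obtain a contradiction, so no solution can lie in $\mathcal{N}^{0}_{\lambda_\ast}$. I expect the only delicate point to be the rigorous passage from the weak identity to the a.e.\ equality — one must ensure that $a u^{-\gamma}$ and $b u^{p}$ are genuinely in $L^{1}_{\mathrm{loc}}(\mathbb{R}^N)$ so that testing against bump functions is legitimate — together with the bookkeeping guaranteeing that the proportionality constant stays strictly positive; the remaining steps are purely algebraic.
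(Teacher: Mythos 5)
Your proof is correct and follows essentially the same route as the paper: combine the weak formulation of the solution with the identity \eqref{N1} so as to cancel the $(u,\psi)$ term, upgrade the resulting weak identity to the pointwise proportionality $c_{1}\lambda_{\ast}a u^{-\gamma}=c_{2}b u^{p}$ a.e., and then contradict positivity of $a u^{-\gamma}$ if $b\leq 0$ on a set of positive measure, respectively contradict $\left[a/b\right]^{\frac{1}{p+\gamma}}\notin X$ if $b>0$. Your choice not to pin down $c_{1},c_{2}$ is in fact an asset: the displayed \eqref{N1} omits the factor $2$ on $(u,\psi)$ that Steps 2--3 of Lemma \ref{L2} actually establish (the paper's constants $(p-1)$ and $(1+\gamma)$ come from the factor-$2$ version, whereas the displayed version would give $p$ and $\gamma$), but in either case both constants are strictly positive, which is all your argument uses.
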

\begin{proof}
	If there exists a solution $u_{\lambda_{\ast}}\in \mathcal{N}_{\lambda_{\ast}}^{0}$ for $(P_{\lambda_{\ast}})$, then it  would follows from  Lemma \ref{L2}-(\ref{N1}) that
	$$\displaystyle \int_{\mathbb{R}^{N}}[(p-1)b(x)u_{\lambda_{\ast}}^{p}-(1+\gamma)\lambda_{\ast}a(x)u_{\lambda_{\ast}}^{-\gamma}]\psi=0, \forall \psi\in X,$$
	that is,
	$$(p-1)b(x)u_{\lambda_{\ast}}^{p}(x)=(1+\gamma)\lambda_{\ast}a(x)u_{\lambda_{\ast}}^{-\gamma}(x)~\mbox{a.e. in}~\mathbb{R}^{N}.$$
	Therefore we have two possibilities. If $b(x)\leq 0$ in $\Omega\subset \mathbb{R}^{N}$ with $|\Omega|>0$, then $(1+\gamma)a(x)u_{\lambda_{\ast}}^{-\gamma}\leq 0$ in $\Omega$, which is an absurd. If $b>0$ in $\mathbb{R}^{N}$, then 
	$$u_{\lambda_{\ast}}=\left[\frac{a(x)\lambda_{\ast}(1+\gamma)}{b(x)(p-1)}\right]^{\frac{1}{p+\gamma}}\notin X,$$
	which is an absurd again.
\end{proof}
\fim
\vspace{0.4cm}

The following result will be essential in order to prove the existence of multiple  solutions for $\lambda >\lambda_{\ast}$ as well. Due to the presence of the singular term, the arguments used for regular cases, see for instance  Corollary 2 in (see \cite{SM}), does not work anymore. 

\begin{lemma}\label{L4}
	The set $\mathcal{N}^{0}_{\lambda_{\ast}}$ is compact.
\end{lemma}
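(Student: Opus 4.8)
The plan is to show that $\mathcal{N}^{0}_{\lambda_{\ast}}$ is sequentially compact by taking an arbitrary sequence $\{u_n\}\subset \mathcal{N}^{0}_{\lambda_{\ast}}$ and extracting a subsequence converging \emph{strongly} in $X$ to some limit that again lies in $\mathcal{N}^{0}_{\lambda_{\ast}}$. The key structural facts I would use are the characterization from Lemma~\ref{L2}, namely that $u\in\mathcal{N}^{0}_{\lambda_{\ast}}$ forces $\int b|u|^{p+1}>0$ and $\lambda(u)=\lambda_{\ast}$, together with the two normalization identities derived inside the proof of Lemma~\ref{L2}, i.e. (after scaling) $\int_{\mathbb{R}^N}a u^{1-\gamma}=\frac{p-1}{\lambda_\ast(p+\gamma)}$ and $\int_{\mathbb{R}^N}b u^{p+1}=\frac{1+\gamma}{p+\gamma}$. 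Since $\lambda$ is $0$-homogeneous I may assume $\|u_n\|=1$, so the sequence is bounded and, by Lemma~\ref{lemma1}, admits a subsequence with $u_n\rightharpoonup u$ in $X$, $u_n\to u$ in $L^q(\mathbb{R}^N)$ for $q\in[2,2^\ast)$, and $u_n\to u$ a.e.

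\textbf{First} I would rule out $u\equiv 0$. By the same Fatou/H\"older argument used in Lemma~\ref{L0}, the compact embedding gives $\int b|u_n|^{p+1}\to\int b|u|^{p+1}$ and $\int a|u_n|^{1-\gamma}\to\int a|u|^{1-\gamma}$; if $u\equiv 0$ these limits would be $0$, but on $\mathcal{N}^{0}_{\lambda_{\ast}}$ the normalized identities keep $\int b|u_n|^{p+1}=\frac{1+\gamma}{p+\gamma}$ and $\int a u_n^{1-\gamma}=\frac{p-1}{\lambda_\ast(p+\gamma)}$ bounded away from zero, a contradiction. Hence $u\not\equiv 0$, and passing to the limit in these identities yields $\int b|u|^{p+1}=\frac{1+\gamma}{p+\gamma}>0$ and $\int a u^{1-\gamma}=\frac{p-1}{\lambda_\ast(p+\gamma)}$, so $u\in Z^+$ and $\lambda(u)$ is well defined.

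\textbf{Next} I would upgrade weak to strong convergence, which is the heart of the argument. Suppose for contradiction $u_n\nrightarrow u$ in $X$; then weak lower semicontinuity of the norm gives $\|u\|^2<\liminf\|u_n\|^2=1$. Because the two integral functionals $\int a\,|\cdot|^{1-\gamma}$ and $\int b\,|\cdot|^{p+1}$ converge along the subsequence (compact embedding), the formula for $\lambda$ in \eqref{E2} would force
$$
\lambda(u)=C(\gamma,p)\frac{\left(\|u\|^2\right)^{\frac{p+\gamma}{p-1}}}{\left[\int_{\mathbb{R}^N}b|u|^{p+1}\right]^{\frac{1+\gamma}{p-1}}\left[\int_{\mathbb{R}^N}a|u|^{1-\gamma}\right]}<\liminf_{n\to\infty}\lambda(u_n)=\lambda_\ast,
$$
contradicting $\lambda_\ast=\inf_{Z^+}\lambda$. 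Therefore $u_n\to u$ strongly in $X$, $\|u\|=1$, and by continuity $\lambda(u)=\lambda_\ast$, so $u\in\mathcal{N}^{0}_{\lambda_{\ast}}$ by the characterization \eqref{109}. Strong convergence together with membership of the limit is exactly sequential compactness, which in the metric space $X$ is equivalent to compactness.

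\textbf{The main obstacle} I expect is the strong-convergence step, and specifically making sure the singular term $\int a|u_n|^{1-\gamma}$ passes to the limit and stays bounded below. Unlike the superlinear term, $a\in L^{2/(1+\gamma)}$ is only handled through the H\"older plus dominated-convergence estimate established in Lemma~\ref{lemma2}; I would invoke precisely that computation to justify $\int a|u_n|^{1-\gamma}\to\int a|u|^{1-\gamma}$, and I must be careful that the normalization $\int a u_n^{1-\gamma}=\frac{p-1}{\lambda_\ast(p+\gamma)}$ (valid because each $u_n\in\mathcal{N}^0_{\lambda_\ast}$ and $u_n>0$ a.e.\ by Lemma~\ref{L2}) prevents the denominator of $\lambda$ from degenerating. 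Everything else---boundedness, the $0$-homogeneous reduction to $S$, and passing to the limit in the product form of $\lambda$---is routine once this convergence is in hand.
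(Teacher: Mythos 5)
Your core mechanism --- upgrading weak to strong convergence by noting that otherwise weak lower semicontinuity of the norm, together with the compact-embedding convergence of $\int a|u_n|^{1-\gamma}$ and $\int b|u_n|^{p+1}$, would force $\lambda(u)<\liminf\lambda(u_n)=\lambda_\ast$, contradicting the definition of $\lambda_\ast$ --- is exactly the paper's argument. But your opening reduction ``since $\lambda$ is $0$-homogeneous I may assume $\|u_n\|=1$'' is a genuine gap: $\mathcal{N}^{0}_{\lambda_{\ast}}$ is \emph{not} scale-invariant. Membership requires not only $\lambda(u)=\lambda_\ast$ and $\int b|u|^{p+1}>0$ but also the Nehari constraint $\phi^{'}_{\lambda_\ast,u}(1)=0$ (equivalently $t(u)=1$ in the notation of \eqref{E2}), and this last condition is destroyed by rescaling; indeed \eqref{109} is a characterization \emph{inside} $\mathcal{N}_{\lambda_\ast}$. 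Concretely: (i) an arbitrary sequence in $\mathcal{N}^{0}_{\lambda_{\ast}}$ need not consist of norm-one functions, and after normalizing, the functions $v_n=u_n/\|u_n\|$ no longer lie in $\mathcal{N}^{0}_{\lambda_{\ast}}$, so the ``normalized identities'' $\int b|v_n|^{p+1}=\tfrac{1+\gamma}{p+\gamma}$ and $\int a v_n^{1-\gamma}=\tfrac{p-1}{\lambda_\ast(p+\gamma)}$ that you invoke to rule out $u\equiv 0$ simply fail for them (they acquire powers of $\|u_n\|$), which makes the step circular; (ii) even if you proved $v_n\to v$ strongly with $\lambda(v)=\lambda_\ast$, that would not give compactness of $\mathcal{N}^{0}_{\lambda_{\ast}}$: the limit $v$ need not satisfy $\phi^{'}_{\lambda_\ast,v}(1)=0$, so quoting \eqref{109} for it is unjustified, and the original sequence $u_n=\|u_n\|v_n$ need not converge at all unless $\|u_n\|$ is separately controlled.

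The repair is precisely the step the paper puts first and your normalization skips: from the two identities valid on $\mathcal{N}^{0}_{\lambda_{\ast}}$, namely $(1+\gamma)\|u\|^{2}=(\gamma+p)\int b|u|^{p+1}$ and $(p-1)\|u\|^{2}=\lambda_\ast(\gamma+p)\int a|u|^{1-\gamma}$, H\"older's inequality and the Sobolev embeddings yield a priori bounds $0<c\le\|u\|\le C$ on all of $\mathcal{N}^{0}_{\lambda_{\ast}}$ (this is \eqref{E4}). With these bounds in hand you work with the original, un-normalized sequence: boundedness gives a weak limit, the lower bound plus the first identity rules out $u\equiv 0$, your $\lambda$-minimality argument gives strong convergence, and then the continuity of $u\mapsto\phi^{'}_{\lambda_\ast,u}(1)$ and $u\mapsto\phi^{''}_{\lambda_\ast,u}(1)$ under strong convergence (together with the convergence of the two integrals) places the limit back in $\mathcal{N}^{0}_{\lambda_{\ast}}$. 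With that substitution your proof coincides with the paper's.
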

\begin{proof}
	First, we note that $u\in \mathcal{N}^{0}_{\lambda_{\ast}}$  implies that
	$$(1+\gamma)||u||^{2}=(\gamma +p)\int_{\mathbb{R}^{N}}b(x)|u|^{p+1}dx ~\mbox{and}~ (p-1)||u||^{2}=\lambda_{\ast}(\gamma +p)\int_{\mathbb{R}^{N}}a(x)|u|^{1-\gamma}dx.$$
	Thus, by using the H\"older's inequality and the Sobolev embeddings $X\hookrightarrow L^{p+1}(\mathbb{R}^{N}),L^{2}(\mathbb{R}^{N})$, we obtain 
	\begin{equation}\label{E4}
		c\leq ||u||\leq C
	\end{equation}
	for some $c,C>0$.
	
	Set $\left\{u_{n}\right\}\subset \mathcal{N}^{0}_{\lambda_{\ast}}$. Thus we may assume that $u_{n}\rightharpoonup u\in X$ in $X$, $u_{n}\to u$ in $L^{q}(\mathbb{R}^{N})$ for $q\in [2,2^{\ast})$ and $u\geq 0$. This, together with (\ref{E4}), imply  that
	$$0<c\leq \liminf_{n \to \infty} ||u_{n}||^{2}=\left(\frac{(\gamma +p)}{1+\gamma}\right)\lim_{n \to \infty}\int_{\mathbb{R}^{N}}b(x)|u_{n}|^{p+1}dx=  \left(\frac{(\gamma +p)}{1+\gamma}\right) \int_{\mathbb{R}^{N}}b(x)|u|^{p+1}dx,$$
	that is, $u\not\equiv 0$. 
	
	Now, we claim that $u_{n}\to u$ in $X$. Indeed, if not,  it would follow from the continuities of $F$ and $H$ (see (\ref{115})), that 
	$$\lambda(u)=\left(\frac{1+\gamma}{p+\gamma}\right)^{\frac{1+\gamma}{p-1}}\left(\frac{p-1}{p+\gamma}\right)\frac{\left(||u||^{2}\right)^{\frac{p+\gamma}{p-1}}}{\left[\int_{\mathbb{R}^{N}}b|u|^{p+1}\right]^{\frac{1+\gamma}{p-1}}\left[\int_{\mathbb{R}^{N}}a|u|^{1-\gamma}\right]}<\liminf \lambda(u_{n})=\lambda_{{\ast}},$$
	which is an absurd, therefore, $u_{n}\to u$ in $X$ and consequently $\mathcal{N}^{0}_{\lambda_{\ast}}$ is compact.
	This ends the proof.
\end{proof}
\fim
\vspace{0.4cm}

Below, by taking advantage of Lemma \ref{L1}, we define for each $\lambda>0$ the non-empty set
$$\hat{\mathcal{N}}_{\lambda}=\left\{u\in X_{+}:\int_{\mathbb{R}^{N}}b|u|^{p+1}>0, \phi_{\lambda,u}~\mbox{has two critical points}\right\} ,$$
and the set
$$\hat{\mathcal{N}}_{\lambda}^{+}=\left\{u\in X_{+}:\int_{\mathbb{R}^{N}}b|u|^{p+1}\leq 0 \right\},$$
which may be empty.

Let $\overline{\hat{\mathcal{N}}_{\lambda}\cup \hat{\mathcal{N}}_{\lambda}^{+}}$ be the closure of $\hat{\mathcal{N}}_{\lambda}\cup \hat{\mathcal{N}}_{\lambda}^{+}$ with respect to the norm topology.  After a few modifications in the proofs of Propositions 2.9, 2.10 and Corollary 2.11 in \cite{SM}, we have
\begin{proposition}\label{r1} There holds:
	\begin{enumerate}
		\item[$(i)$] if $\lambda_{1},\lambda_{2}\in (0,\lambda_{\ast})$, then  $\hat{\mathcal{N}}_{\lambda_{1}}=\hat{\mathcal{N}}_{\lambda_{2}}$,
		\item[$(ii)$] if $u\in \hat{\mathcal{N}}_{\lambda}$, then $tu\in \hat{\mathcal{N}}_{\lambda}$ for all $t>0$, that is, $\hat{\mathcal{N}}_{\lambda}$ is a positive cone generated by the  set $\mathcal{N}_{\lambda}^{+}\cup \mathcal{N}_{\lambda}^{-}$.  More specifically, 
		$$\hat{\mathcal{N}}_{\lambda}\cup \hat{\mathcal{N}}_{\lambda}^{+}=\left\{tu:t>0,~u\in \mathcal{N}_{\lambda}^{+}\cup \mathcal{N}_{\lambda}^{-}\right\},$$
		\item[$(iii)$] there holds
		$$\overline{\hat{\mathcal{N}}_{\lambda_{\ast}}\cup \hat{\mathcal{N}}_{\lambda_{\ast}}^{+}}= \hat{\mathcal{N}}_{\lambda_{\ast}}\cup \hat{\mathcal{N}}_{\lambda_{\ast}}^{+}\cup \left\{tu:t>0,~u\in \mathcal{N}_{\lambda_{\ast}}^{0}\right\}\cup \left\{0\right\},$$
		\item[$(iv)$] the function $t_{\lambda_{\ast}}$ is  continuous and  $P^{-}:S\cap \overline{\hat{\mathcal{N}}_{\lambda_{\ast}}}\rightarrow \mathcal{N}_{\lambda_{\ast}}^{-}\cup \mathcal{N}_{\lambda_{\ast}}^{0}$ defined by $P^{-}(w)=t_{\lambda_{\ast}}(w)w$ is a homeomorphism, where
		\begin{equation}\label{EP1}
			t_{\lambda_{\ast}}(w)=\left\{
			\begin{array}{rcl}
				t_{\lambda_{\ast}}^{-}(w)&~\mbox{if} ~ w\in \hat{\mathcal{N}}_{\lambda_{\ast}},~~~~~~\\
				t_{\lambda_{\ast}}^{0}(w)&~\mbox{otherwise,}~~~~~~~
			\end{array}
			\right.
		\end{equation}
		\item[$(v)$] the function $s_{\lambda_{\ast}}$ is  continuous and $P^{+}:S\rightarrow \mathcal{N}_{\lambda_{\ast}}^{+}\cup \mathcal{N}_{\lambda_{\ast}}^{0}$ defined by $P^{+}(u)=s_{\lambda_{\ast}}(u)u$ is a homeomorphism, where 
		\begin{equation}\label{EP2}
			s_{\lambda_{\ast}}(u)=\left\{
			\begin{array}{rcl}
				t_{\lambda_{\ast}}^{+}(u)&~\mbox{if} ~ u\in \hat{\mathcal{N}}_{\lambda_{\ast}}\cup \hat{\mathcal{N}}_{\lambda_{\ast}}^{+}~~~~~~~~~~~~\\
				t_{\lambda_{\ast}}^{0}(u)&~\mbox{otherwise,}~~~~~~~~~~~~~~~~~~~~
			\end{array}
			\right.
		\end{equation}
		\item[$(vi)$] the set $\mathcal{N}_{\lambda_{\ast}}^{0}\subset \mathcal{N}_{\lambda_{\ast}}$ has empty interior, where  $\mathcal{N}_{\lambda_{\ast}}$ is endowed with the induced topology of the norm on $X$. 
	\end{enumerate}
\end{proposition}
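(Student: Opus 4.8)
The plan is to reduce all six assertions to the fibering analysis of Propositions \ref{prop21} and \ref{lambdava} together with the continuity, $0$-homogeneity and unboundedness of $\lambda(\cdot)$ from Lemma \ref{L0}, following the scheme of \cite{SM} but systematically replacing every appeal to the $C^1$-regularity of the energy by the elementary fact that the scalar coefficients $\|u\|^2$, $\int_{\mathbb{R}^N} a|u|^{1-\gamma}$ and $\int_{\mathbb{R}^N} b|u|^{p+1}$ depend continuously on $u\in X$, so that the fiber map $\phi_{\lambda,u}$ and its critical values vary continuously even though $\Phi_\lambda$ itself is not differentiable.

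For (i) and (ii) I would argue as follows. By the definition of $\lambda_\ast$ every $u\in Z^+$ satisfies $\lambda(u)\geq\lambda_\ast$; hence for $0<\lambda<\lambda_\ast$ Proposition \ref{lambdava} yields exactly two critical points of $\phi_{\lambda,u}$ for each $u\in Z^+$, so $\hat{\mathcal{N}}_\lambda=Z^+$ regardless of $\lambda\in(0,\lambda_\ast)$, which is (i). For (ii), since $\lambda(tu)=\lambda(u)$ and $\int_{\mathbb{R}^N}b|tu|^{p+1}$ keeps the sign of $\int_{\mathbb{R}^N}b|u|^{p+1}$ for $t>0$, membership in $\hat{\mathcal{N}}_\lambda$ and in $\hat{\mathcal{N}}_\lambda^+$ is invariant under positive dilations, giving the cone structure; the explicit description then follows by sending each $u$ to its critical points $t^+_\lambda(u)u\in\mathcal{N}_\lambda^+$ and $t^-_\lambda(u)u\in\mathcal{N}_\lambda^-$ (or to the unique local minimizer $t^+_\lambda(u)u\in\mathcal{N}_\lambda^+$ when $\int_{\mathbb{R}^N}b|u|^{p+1}\leq0$).

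For (iii) I would first use $0$-homogeneity and Lemma \ref{L2} to identify the cone over $\mathcal{N}^0_{\lambda_\ast}$ with $\{u\in Z^+:\lambda(u)=\lambda_\ast\}$ and $\hat{\mathcal{N}}_{\lambda_\ast}$ with $\{u\in Z^+:\lambda(u)>\lambda_\ast\}$; the right-hand side then collapses to the closed nonnegative cone $\{u\in X: u\geq0\}$, which is closed and contains the union, so one inclusion is immediate. The reverse inclusion amounts to approximating the points with $\lambda(u)=\lambda_\ast$ by points with $\lambda>\lambda_\ast$ (and $0$ by a dilation sequence); this reduces to showing that the minimal level set $\{u\in Z^+:\lambda(u)=\lambda_\ast\}$ has empty interior, which I would extract from the explicit homogeneous formula \eqref{E2}, local constancy of that ratio being incompatible with the freedom to vary $\|u\|^2$ and $\int_{\mathbb{R}^N}b|u|^{p+1}$ independently. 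This same empty-interior statement, transported through the projections, yields (vi).

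Finally for (iv)--(v) I would define $t_{\lambda_\ast}$ and $s_{\lambda_\ast}$ by the stated piecewise formulas, well-posed because on $\hat{\mathcal{N}}_{\lambda_\ast}$ the roots $t^\pm_{\lambda_\ast}$ exist and on $\{\lambda(u)=\lambda_\ast\}$ they merge into the inflection value $t^0_{\lambda_\ast}$ given explicitly by \eqref{E2}. The inverse of each $P^\pm$ is the normalization $w\mapsto w/\|w\|$, which is plainly continuous, so the homeomorphism claims reduce to the continuity of $t_{\lambda_\ast}$ and $s_{\lambda_\ast}$; away from $\mathcal{N}^0_{\lambda_\ast}$ this follows from the implicit function theorem applied to the scalar equation $\phi'_{\lambda_\ast,u}(t)=0$ (whose coefficients are continuous in $u$), and at the degenerate stratum it follows from a sequential argument using the compactness of $\mathcal{N}^0_{\lambda_\ast}$ (Lemma \ref{L4}). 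The hard part will be precisely this continuity across $\mathcal{N}^0_{\lambda_\ast}$, where the two critical points $t^+_{\lambda_\ast}(w)$ and $t^-_{\lambda_\ast}(w)$ coalesce and the implicit function theorem degenerates; there one must show directly that both roots converge to $t^0_{\lambda_\ast}(w_0)$ as $w\to w_0$ with $\lambda(w_0)=\lambda_\ast$, which is where the singular term and the consequent lack of differentiability of $\Phi_\lambda$ force the replacement of the arguments of \cite{SM} by the homogeneity of $\lambda$, the explicit formula \eqref{E2}, and the compactness Lemma \ref{L4}.
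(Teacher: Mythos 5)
Your treatment of (i) and (ii) is correct and complete: by Proposition \ref{lambdava}, $\hat{\mathcal{N}}_{\lambda}=\{u\in Z^{+}:\lambda<\lambda(u)\}$, so this set equals $Z^{+}$ for every $\lambda\in(0,\lambda_{\ast})$, and the cone structure follows from the $0$-homogeneity of $\lambda(\cdot)$. Your plan for (iv)--(v) is also workable (the inverse of $P^{\pm}$ is normalization, and continuity of $t_{\lambda_{\ast}},s_{\lambda_{\ast}}$ follows from a perturbation-of-roots argument: the critical points of $\phi_{\lambda_{\ast},w_{n}}$ stay in a compact subinterval of $(0,\infty)$, and any limit point is a critical point of the limiting fiber map), though note that the surjectivity of $P^{-}$ onto $\mathcal{N}_{\lambda_{\ast}}^{-}\cup\mathcal{N}_{\lambda_{\ast}}^{0}$ already presupposes the density statement of (iii), since one needs the cone over $\mathcal{N}_{\lambda_{\ast}}^{0}$ to lie in $\overline{\hat{\mathcal{N}}_{\lambda_{\ast}}}$. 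For comparison, the paper gives no self-contained proof at all: it invokes Propositions 2.9, 2.10 and Corollary 2.11 of \cite{SM} ``after a few modifications'', and the modifications concern exactly the point discussed next.

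The genuine gap is in (iii)/(vi). You correctly reduce both to the claim that $\{u\in Z^{+}:\lambda(u)=\lambda_{\ast}\}$ has empty interior in $Z^{+}$, i.e.\ that $\lambda(\cdot)$ is not locally constant at any of its minimizers. But your justification --- that local constancy is ``incompatible with the freedom to vary $\|u\|^{2}$ and $\int_{\mathbb{R}^{N}}b|u|^{p+1}$ independently'' --- fails precisely at the points in question. Indeed, Lemma \ref{L2} shows that every $u\in\mathcal{N}^{0}_{\lambda_{\ast}}$ satisfies the Euler--Lagrange identity \eqref{N1}, and a direct computation (using the identities $F(u)=\frac{1+\gamma}{p+\gamma}\|u\|^2$, $\lambda_{\ast}H(u)=\frac{p-1}{p+\gamma}\|u\|^2$ valid on $\mathcal{N}^{0}_{\lambda_{\ast}}$) shows that \eqref{N1} is exactly the statement that the directional derivative of $\lambda(\cdot)$ vanishes in every direction $\psi$: a fixed nontrivial linear combination of the derivatives of $\|u\|^{2}$, $\int b|u|^{p+1}$ and $\int a|u|^{1-\gamma}$ is identically zero there. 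So to first order there is no independent freedom at all at a minimizer (as must be the case at a minimum), and non-constancy cannot be detected by the variation you propose. A correct argument must be a rigidity argument: if $\lambda\equiv\lambda_{\ast}$ on a relatively open subset of $Z^{+}$, then \eqref{N1} holds at the normalization of every point of that set; testing \eqref{N1} at $u$ and at the normalization of $u+s\psi$ against test functions $\varphi$ supported away from the (compact) support of $\psi$ and comparing the resulting scalar relations forces the pointwise identity $b\,u^{p+\gamma}=c\,a$ a.e.\ for a constant $c>0$, which is impossible on any positive-measure set where $b\le 0$ (since $a>0$), and when $b>0$ yields $u=[c\,a/b]^{1/(p+\gamma)}\in X$, contradicting the standing hypothesis $[a/b]^{1/(p+\gamma)}\notin X$. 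This is the same mechanism as Corollary \ref{C1}. The fact that your sketch never invokes Lemma \ref{L2}, Corollary \ref{C1}, or the hypothesis on $[a/b]^{1/(p+\gamma)}$ is a reliable sign that the decisive step is missing: without some such hypothesis the ``thinness'' of the degenerate set is not a soft consequence of the formula \eqref{E2}.
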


As a fundamental ingredient to show multiplicity of solutions for Problem (\ref{pq})  beyond Nehari's extremal value, we have to prove the continuity and monotonicity of the energy functional constrained on $\mathcal{N}_{\lambda}^+$ and $\mathcal{N}_{\lambda}^-$. To do these, let us define  $J_{\lambda}^{+}:\hat{\mathcal{N}}_{\lambda}\cup \hat{\mathcal{N}}_{\lambda}^{+}\rightarrow \mathbb{R}$ and $J_{\lambda}^{-}:\hat{\mathcal{N}}_{\lambda}\rightarrow \mathbb{R}$  by
\begin{equation}
	\label{2002}
	J^{+}_{\lambda}(u)=\Phi_{\lambda}(t^{+}_{\lambda}(u)u)~\mbox{and}~J_{\lambda}^{-}(u)=\Phi_{\lambda}(t^{-}_{\lambda}(u)u)
\end{equation}
and denote their infimum by 
$$\tilde{J}^{+}_{\lambda}=\inf\left\{J^{+}_{\lambda}(u):u\in \mathcal{N}_{\lambda}^{+}\right\}~\mbox{and}~\tilde{J}^{-}_{\lambda}=\inf\left\{J^{-}_{\lambda}(u):u\in \mathcal{N}_{\lambda}^{-}\right\},$$
respectively.

Unlikely of the non-singular case, the proof of the regularities of the functions $t^{+}_{\lambda}(u)$ and $t^{-}_{\lambda}(u)$ here are  more delicated. However, by inspiring on ideas found in \cite{HSS}, we are able to overcome these obstacles. 
\begin{lemma}\label{L5}
	Let $u\in X_{+}$ and $I\subset \mathbb{R}$ be an open interval such that $t^{\pm}_{\lambda}(u)$ are well defined for all $\lambda \in I$. Then:
	\begin{itemize}
		\item[$a)$] the functions $I\ni \lambda \rightarrow t^{\pm}_{\lambda}
		(u)$ are $C^{\infty}$. Moreover, $I\ni \lambda \rightarrow t^{-}_{\lambda}(u)$ is decreasing while $I\ni \lambda \rightarrow t^{+}_{\lambda}(u)$ is increasing. 
		\item[$b)$] the functions $I\ni \lambda \rightarrow J^{\pm}_{\lambda}(u)$ are $C^{\infty}$ and decreasing.
	\end{itemize}
	In particular, both claims hold true for $I=(0,\lambda_\ast)$ and all $u\in X_{+}$ given.
\end{lemma}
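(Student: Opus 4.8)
The plan is to realize $\lambda\mapsto t^{\pm}_{\lambda}(u)$ as implicitly defined zeros of a single smooth equation and to apply the implicit function theorem, the crucial point being that the required non-degeneracy is exactly the strict sign of $\phi''_{\lambda,u}$ that defines $\mathcal{N}_{\lambda}^{+}$ and $\mathcal{N}_{\lambda}^{-}$. First I would clear the singularity from the critical-point equation $\phi'_{\lambda,u}(t)=0$ by multiplying through by $t^{\gamma}$. Writing $A=\|u\|^{2}$, $H=\int_{\mathbb{R}^{N}}a|u|^{1-\gamma}dx$ and $F=\int_{\mathbb{R}^{N}}b|u|^{p+1}dx$, define
$$\Psi(t,\lambda)=t^{\gamma}\phi'_{\lambda,u}(t)=t^{1+\gamma}A-\lambda H-t^{p+\gamma}F,\qquad (t,\lambda)\in(0,\infty)\times I.$$
This $\Psi$ is $C^{\infty}$ on $(0,\infty)\times I$, and for each fixed $\lambda$ its positive zeros are precisely the critical points $t^{+}_{\lambda}(u)<t^{-}_{\lambda}(u)$ of $\phi_{\lambda,u}$. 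Since $\partial_{t}\Psi(t,\lambda)=\gamma t^{\gamma-1}\phi'_{\lambda,u}(t)+t^{\gamma}\phi''_{\lambda,u}(t)$, at a critical point the first term vanishes and
$$\partial_{t}\Psi\big(t^{\pm}_{\lambda}(u),\lambda\big)=\big(t^{\pm}_{\lambda}(u)\big)^{\gamma}\,\phi''_{\lambda,u}\big(t^{\pm}_{\lambda}(u)\big).$$
Because $t^{+}_{\lambda}(u)u\in\mathcal{N}_{\lambda}^{+}$ and $t^{-}_{\lambda}(u)u\in\mathcal{N}_{\lambda}^{-}$ give $\phi''_{\lambda,u}(t^{+}_{\lambda}(u))>0$ and $\phi''_{\lambda,u}(t^{-}_{\lambda}(u))<0$, we have $\partial_{t}\Psi\neq0$ at both points, so the implicit function theorem yields that $\lambda\mapsto t^{\pm}_{\lambda}(u)$ is $C^{\infty}$ on $I$, which is the regularity half of $a)$.

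The monotonicity follows from the same relation. Differentiating $\Psi(t^{\pm}_{\lambda}(u),\lambda)\equiv0$ in $\lambda$ and using $\partial_{\lambda}\Psi=-H$ gives
$$\frac{d}{d\lambda}t^{\pm}_{\lambda}(u)=-\frac{\partial_{\lambda}\Psi}{\partial_{t}\Psi}=\frac{H}{\big(t^{\pm}_{\lambda}(u)\big)^{\gamma}\,\phi''_{\lambda,u}\big(t^{\pm}_{\lambda}(u)\big)}.$$
Since $a>0$ forces $H>0$, the sign is governed entirely by $\phi''_{\lambda,u}$: the right-hand side is positive at $t^{+}_{\lambda}(u)$ and negative at $t^{-}_{\lambda}(u)$, so $\lambda\mapsto t^{+}_{\lambda}(u)$ is increasing and $\lambda\mapsto t^{-}_{\lambda}(u)$ is decreasing, completing $a)$.

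For $b)$, the map $J^{\pm}_{\lambda}(u)=\phi_{\lambda,u}(t^{\pm}_{\lambda}(u))$ is a composition of the smooth map $(t,\lambda)\mapsto\phi_{\lambda,u}(t)$ on $(0,\infty)\times I$ with the $C^{\infty}$ map $\lambda\mapsto t^{\pm}_{\lambda}(u)$, hence $C^{\infty}$. An envelope argument then removes the interior derivative: since $\phi'_{\lambda,u}(t^{\pm}_{\lambda}(u))=0$,
$$\frac{d}{d\lambda}J^{\pm}_{\lambda}(u)=\phi'_{\lambda,u}\big(t^{\pm}_{\lambda}(u)\big)\frac{d}{d\lambda}t^{\pm}_{\lambda}(u)+\partial_{\lambda}\phi_{\lambda,u}\big(t^{\pm}_{\lambda}(u)\big)=-\frac{\big(t^{\pm}_{\lambda}(u)\big)^{1-\gamma}}{1-\gamma}\,H<0,$$
using $0<\gamma<1$ and $H>0$; thus both $J^{+}_{\lambda}(u)$ and $J^{-}_{\lambda}(u)$ are decreasing.

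For the final assertion I would check that $I=(0,\lambda_{\ast})$ is admissible for every $u\in X_{+}$. By Lemma \ref{L0} every $u\in Z^{+}$ satisfies $\lambda(u)\geq\lambda_{\ast}$, so Proposition \ref{lambdava} guarantees that $t^{+}_{\lambda}(u)$ and $t^{-}_{\lambda}(u)$ are well defined for all $\lambda\in(0,\lambda_{\ast})$, while for $u\notin Z^{+}$ Proposition \ref{prop21} provides $t^{+}_{\lambda}(u)$ for all $\lambda>0$; applying the claims on this open interval gives the stated conclusion. The main obstacle is the very first step: establishing the non-degeneracy $\partial_{t}\Psi\neq0$ cleanly despite the singular term $t^{-\gamma}$. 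Multiplying by $t^{\gamma}$ is what turns the singular fiber equation into a genuinely smooth relation on $(0,\infty)\times I$ and simultaneously exhibits $\phi''_{\lambda,u}$ as the controlling quantity, so that the strict inequalities defining $\mathcal{N}_{\lambda}^{\pm}$ supply at once the invertibility needed for the implicit function theorem and the signs of all the derivatives.
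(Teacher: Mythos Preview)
Your proof is correct and follows essentially the same route as the paper: apply the implicit function theorem to the critical-point equation of the fiber map, using that $\phi''_{\lambda,u}(t^{\pm}_{\lambda}(u))\neq 0$ provides the required non-degeneracy, then differentiate implicitly to obtain the signs of $\tfrac{d}{d\lambda}t^{\pm}_{\lambda}(u)$, and finish $b)$ via the envelope identity $\tfrac{d}{d\lambda}J^{\pm}_{\lambda}(u)=\partial_{\lambda}\phi_{\lambda,u}(t^{\pm}_{\lambda}(u))$. The only cosmetic difference is that you first multiply $\phi'_{\lambda,u}(t)$ by $t^{\gamma}$ to obtain a polynomial $\Psi$, whereas the paper works directly with $F(\lambda,t,e,f,g)=et-\lambda f t^{-\gamma}-g t^{p}$ (already $C^{\infty}$ on $t>0$); your remark that clearing the singularity is the ``main obstacle'' is therefore overstated, but the computation it produces is identical to the paper's.
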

\begin{proof} Let us begin proving $a)$.  To show that $I\ni \lambda \rightarrow t^{\pm}_{\lambda}
	(u)$ are $C^{\infty}$, define the $C^{\infty}$-function $F$ by 
	$$F(\lambda,t,e,f,g)=et-\lambda ft^{-\gamma}- gt^{p}~\mbox{ for}~ (\lambda,t,e,f,g)\in I\times (0,\infty)\times \mathbb{R}^{3},$$
	and set
	$$e_{1}=||u||^{2},~f_{1}=\int_{\mathbb{R}^{N}}a|u|^{1-\gamma}~\mbox{and}~ g_{1}=\int_{\mathbb{R}^{N}}b|u|^{p+1}.$$
	
	For $\lambda^{'}\in I$, we have that
	\begin{align*}
		\frac{\partial F(\lambda^{'},t^{+}_{\lambda^{'}}(u),e_{1},f_{1},g_{1})}{\partial t}=||u||^{2}+\gamma (t^{+}_{\lambda^{'}}(u))^{-\gamma-1}\lambda^{'}\int_{\mathbb{R}^{N}}a(x)|u|^{1-\gamma}dx\\
		-p(t^{+}_{\lambda^{'}}(u))^{p-1}\int_{\mathbb{R}^{N}}b(x)|u|^{p+1}dx>0                                                ,&
	\end{align*}
	because $t^{+}_{\lambda^{'}}(u)u\in \mathcal{N}_{\lambda^{'}}^{+}$. Since
	$$
	F(\lambda^{'},t^{+}_{\lambda^{'}}(u),e_{1},f_{1},g_{1})=0~\mbox{and}~\frac{\partial F}{\partial t}(\lambda^{'},t^{+}_{\lambda^{'}}(u),e_{1},f_{1},g_{1})>0,
	$$
	it follows from the implicit function theorem  that $t^{+}_{\lambda}(u)\in C^{\infty}((\lambda^{'}-\epsilon,\lambda^{'}+\epsilon),\mathbb{R})$ for some $\epsilon>0$ and hence, by the arbitrariness of  $\lambda^{'}$, we conclude that  the function $I\ni \lambda \rightarrow t^{+}_{\lambda}
	(u)$ is $C^{\infty}$. Moreover, since $F(\lambda,t^{+}_{\lambda}(u),e_{1},f_{1},g_{1})=0$ we also have  
	$$\frac{\partial F(\lambda,t^{+}_{\lambda}(u),e_{1},f_{1},g_{1})}{\partial \lambda}+\frac{\partial F(\lambda,t^{+}_{\lambda}(u),e_{1},f_{1},g_{1})}{\partial t}\frac{dt^{+}_{\lambda}(u)}{d\lambda}=0,$$
	that is,
	$$\frac{dt^{+}_{\lambda}(u)}{d\lambda}=\frac{(t^{+}_{\lambda}(u))^{-\gamma}\int_{\mathbb{R}^{N}}a|u|^{1-\gamma}}{||u||^{2}+\gamma (t^{+}_{\lambda}(u))^{-\gamma-1}\lambda\int_{\mathbb{R}^{N}}a(x)|u|^{1-\gamma}dx-p(t^{+}_{\lambda}(u))^{p-1}\int_{\mathbb{R}^{N}}b(x)|u|^{1+p}dx}>0,$$
	where the last inequality is a consequence of  $t^{+}_{\lambda}(u)u\in \mathcal{N}^{+}_{\lambda}$. Therefore, the function $I\ni \lambda \rightarrow t^{+}_{\lambda}(u)$ is increasing. In a similar way, we can prove that $I\ni \lambda \rightarrow t^{-}_{\lambda}(u)$ is $C^{\infty}$ and decreasing.
	
	Now let us prove $b)$. Since $t^{+}_{\lambda}(u)>0$ and 
	\begin{align*}
		J_{\lambda}^{+}(u)=\Phi_{\lambda}(t^{+}_{\lambda}(u)u)=\frac{(t^{+}_{\lambda}(u))^{2}}{2}||u||^{2}-\frac{(t^{+}_{\lambda}(u))^{1-\gamma}\lambda}{1-\gamma}\int_{\mathbb{R}^{N}}a(x)|u|^{1-\gamma}dx\\ -\frac{(t^{+}_{\lambda}(u))^{p+1}}{p+1}\int_{\mathbb{R}^{N}}b(x)|u|^{p+1}dx,
	\end{align*}
	it follows from item $a)$ the $C^{\infty}$-regularity for $J_{\lambda}^{+}(u)$ with respect to $\lambda$. Besides this, we have
	\begin{align*}
		\frac{dJ^{+}_{\lambda}(u)}{d\lambda}=\phi_{\lambda,u}^{'}(t^{+}_{\lambda}(u))\frac{dt^{+}_{\lambda}(u)}{d\lambda}-\frac{(t^{+}_{\lambda}(u))^{1-\gamma}}{1-\gamma}\int_{\mathbb{R}^{N}}a(x)|u|^{1-\gamma}dx\\
		=-\frac{(t^{+}_{\lambda}(u))^{1-\gamma}}{1-\gamma}\int_{\mathbb{R}^{N}}a(x)|u|^{1-\gamma}dx<0,~~~~~~~~~~~~~~~~~~~
	\end{align*}
	where we used the fact that $t^{+}_{\lambda}(u)u\in \mathcal{N}^{+}_{\lambda}$ to obtain the last inequality, that is, $I\ni \lambda \rightarrow J^{+}_{\lambda}(u)$ is decreasing. Similarly, we can prove that $I\ni \lambda \rightarrow J^{-}_{\lambda}(u)$ is a continuous and decreasing function. 
\end{proof} 
\fim
\vspace{0.4cm}

As a consequence of the monotonicity proved above, after some adjusts on the proof of Corollary 2.15  in \cite{SM}, we can prove the below Corollary.
\begin{corollary}\label{C3} Suppose that $u\not\in \hat{\mathcal{N}}_{\lambda_{\ast}}^{+}$. Then
	$$\lim_{\lambda \uparrow \lambda_{\ast}}t^{-}_{\lambda}(u)=t_{\lambda_{\ast}}(u),~\lim_{\lambda \uparrow \lambda_{\ast}}t^{+}_{\lambda}(u)=s_{\lambda_{\ast}}(u)$$
	$$\lim_{\lambda \uparrow \lambda_{\ast}}J^{-}_{\lambda}(u)=\Phi_{\lambda_{\ast}}(t_{\lambda_{\ast}}(u)u),~\lim_{\lambda \uparrow \lambda_{\ast}}J^{+}_{\lambda}(u)=\Phi_{\lambda_{\ast}}(s_{\lambda_{\ast}}(u)u),$$
	where $t_{\lambda_{\ast}}(u)$ and $s_{\lambda_{\ast}}(u)$ are defined at $(\ref{EP1})$ and $(\ref{EP2})$, respectively.
\end{corollary}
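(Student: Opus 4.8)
The plan is to reduce the convergence of the two fibering roots to the convergence of the zeros of a single $\lambda$-independent function and then to transport the conclusion to the energy by joint continuity. Throughout I write $e=\|u\|^{2}$, $f=\int_{\mathbb{R}^{N}}a|u|^{1-\gamma}$ and $g=\int_{\mathbb{R}^{N}}b|u|^{p+1}$. Since $u\notin\hat{\mathcal{N}}^{+}_{\lambda_{\ast}}$ we have $g>0$, so $u\in Z^{+}$ and hence $\lambda(u)\ge\lambda_{\ast}$ by the definition of $\lambda_{\ast}$. Consequently, for every $\lambda<\lambda_{\ast}\le\lambda(u)$, Proposition \ref{lambdava} places $\phi_{\lambda,u}$ in case $I)$ of Proposition \ref{prop21}, so $0<t^{+}_{\lambda}(u)<t^{-}_{\lambda}(u)$ are both well defined on $(0,\lambda_{\ast})$ and the left limits in the statement are meaningful.

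First I would introduce $\psi:(0,\infty)\to\mathbb{R}$, $\psi(t)=t^{1+\gamma}e-t^{p+\gamma}g$. Multiplying $\phi'_{\lambda,u}(t)=0$ by $t^{\gamma}>0$ shows that $t$ is a critical point of $\phi_{\lambda,u}$ if and only if $\psi(t)=\lambda f$, and that $\phi''_{\lambda,u}(t)=t^{-\gamma}\psi'(t)$ at such a point. A direct computation gives $\psi'(t)=0$ exactly at $t(u)=\left(\frac{(1+\gamma)e}{(p+\gamma)g}\right)^{1/(p-1)}$, the value in \eqref{E2}; thus $\psi$ is strictly increasing on $(0,t(u))$, strictly decreasing on $(t(u),\infty)$, and attains its maximum $\psi(t(u))=\lambda(u)f$. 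Because $\phi''_{\lambda,u}$ and $\psi'$ share the same sign at critical points, $t^{+}_{\lambda}(u)$ is the unique preimage of $\lambda f$ under the increasing branch $\psi|_{(0,t(u)]}$ and $t^{-}_{\lambda}(u)$ the unique preimage under the decreasing branch $\psi|_{[t(u),\infty)}$.

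Next I would pass to the limit. By Lemma \ref{L5}~$a)$ the map $\lambda\mapsto t^{+}_{\lambda}(u)$ is increasing and bounded above by $t(u)$, while $\lambda\mapsto t^{-}_{\lambda}(u)$ is decreasing and bounded below by $t(u)$; hence $L^{\pm}:=\lim_{\lambda\uparrow\lambda_{\ast}}t^{\pm}_{\lambda}(u)$ exist with $L^{+}\le t(u)\le L^{-}$. From $\psi(t^{\pm}_{\lambda}(u))=\lambda f\to\lambda_{\ast}f$ and the continuity of $\psi$ we get $\psi(L^{\pm})=\lambda_{\ast}f$, and the strict monotonicity of $\psi$ on each branch identifies $L^{+}$ and $L^{-}$ as the two branch-preimages of $\lambda_{\ast}f$. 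If $\lambda(u)>\lambda_{\ast}$ (so $u\in\hat{\mathcal{N}}_{\lambda_{\ast}}$) these are $t^{+}_{\lambda_{\ast}}(u)=s_{\lambda_{\ast}}(u)$ and $t^{-}_{\lambda_{\ast}}(u)=t_{\lambda_{\ast}}(u)$; if $\lambda(u)=\lambda_{\ast}$ (so $u\notin\hat{\mathcal{N}}_{\lambda_{\ast}}\cup\hat{\mathcal{N}}^{+}_{\lambda_{\ast}}$) then $\lambda_{\ast}f=\psi(t(u))$ forces $L^{+}=L^{-}=t(u)=t^{0}_{\lambda_{\ast}}(u)$, which by \eqref{EP1}--\eqref{EP2} equals both $s_{\lambda_{\ast}}(u)$ and $t_{\lambda_{\ast}}(u)$. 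This yields the first line of the corollary.

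Finally, the energy limits come from $J^{\pm}_{\lambda}(u)=\phi_{\lambda,u}(t^{\pm}_{\lambda}(u))$ together with the joint continuity of $(\lambda,t)\mapsto\phi_{\lambda,u}(t)=\frac{t^{2}}{2}e-\frac{t^{1-\gamma}\lambda}{1-\gamma}f-\frac{t^{p+1}}{p+1}g$ on $(0,\infty)\times(0,\infty)$: letting $\lambda\uparrow\lambda_{\ast}$ and using $t^{\pm}_{\lambda}(u)\to L^{\pm}$ gives $J^{+}_{\lambda}(u)\to\Phi_{\lambda_{\ast}}(s_{\lambda_{\ast}}(u)u)$ and $J^{-}_{\lambda}(u)\to\Phi_{\lambda_{\ast}}(t_{\lambda_{\ast}}(u)u)$. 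I expect the degenerate case $\lambda(u)=\lambda_{\ast}$ to be the main obstacle: there the implicit function theorem underlying Lemma \ref{L5} breaks down at $\lambda_{\ast}$ because $\phi''_{\lambda_{\ast},u}(t^{0}_{\lambda_{\ast}}(u))=0$, so no differentiable continuation of $t^{\pm}_{\lambda}(u)$ up to $\lambda_{\ast}$ is available and one genuinely needs the monotone squeeze of $t^{+}_{\lambda}$ and $t^{-}_{\lambda}$ toward the common inflection point $t(u)$.
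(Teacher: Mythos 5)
Your proof is correct, and it is essentially the argument the paper itself points to: the paper derives Corollary \ref{C3} from the monotonicity of Lemma \ref{L5} together with an adaptation of Corollary 2.15 of \cite{SM}, and that is exactly what you do --- monotonicity plus the bounds $t^{+}_{\lambda}(u)<t(u)<t^{-}_{\lambda}(u)$ give existence of the limits $L^{\pm}$, which you then identify through the $\lambda$-independent function $\psi(t)=t^{1+\gamma}e-t^{p+\gamma}g$ (the nonlinear Rayleigh quotient underlying \eqref{E2}), treating separately the cases $\lambda(u)>\lambda_{\ast}$ and $\lambda(u)=\lambda_{\ast}$ so that the limits match the definitions \eqref{EP1}--\eqref{EP2}. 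Your explicit handling of the degenerate case $\lambda(u)=\lambda_{\ast}$, where the implicit function theorem behind Lemma \ref{L5} cannot be continued up to $\lambda_{\ast}$ and the monotone squeeze onto the inflection point $t^{0}_{\lambda_{\ast}}(u)$ is genuinely needed, is precisely the ``adjustment'' the paper alludes to.
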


\section{Multiplicity of solutions on the interval $0<\lambda<\lambda_{\ast}$}

In this section we show the existence of two solutions for problem \eqref{pq} when $\lambda\in(0,\lambda_\ast)$. Some ideas are motivated by the work of Hirano-Sacon-Shioji \cite{HSS}. Like them, first we show the existence of $u_{\lambda}\in \mathcal{N}^{+}_{\lambda}$ and $w_{\lambda}\in \mathcal{N}^{-}_{\lambda}$ such that
$$\Phi_{\lambda}(u_{\lambda})=\tilde{J}^{+}_{\lambda},~~\Phi_{\lambda}(w_{\lambda})=\tilde{J}^{-}_{\lambda},$$
$$0\leq \int_{\mathbb{R}^{N}}\nabla u_{\lambda} \nabla \psi+V(x)u_{\lambda} \psi dx-\lambda\int_{\mathbb{R}^{N}}a(x)u^{-\gamma}_{\lambda} \psi dx-\int_{\mathbb{R}^{N}}b(x)u^{p}_{\lambda} \psi dx, \forall \psi \in X_{+}$$
and
$$0\leq \int_{\mathbb{R}^{N}}\nabla w_{\lambda} \nabla \psi+V(x)w_{\lambda} \psi dx-\lambda\int_{\mathbb{R}^{N}}a(x)w^{-\gamma}_{\lambda} \psi dx-\int_{\mathbb{R}^{N}}b(x)w^{p}_{\lambda} \psi dx, \forall \psi \in X_{+}.$$
The next step will be to adjust the arguments used to prove the Step 3 of Lemma \ref{L2} to show that the last inequalities are in fact equalities, that is, $u_{\lambda}\in \mathcal{N}^{+}_{\lambda}$ and $w_{\lambda}\in \mathcal{N}^{-}_{\lambda}$
are solutions for problem $(P_{\lambda})$.

To carry out this strategy, let us begin by proving the next Lemma.
\begin{lemma}\label{LL1} Let $\lambda>0$. Then:
	\begin{itemize}
		\item[$a)$] for all $u\in \mathcal{N}^{+}_{\lambda}$, we have that
		\begin{equation}\label{N2}
			||u||^{2}<\frac{\lambda (\gamma+p)}{p-1}\int_{\mathbb{R}^{N}}a(x)|u|^{1-\gamma}dx
		\end{equation}
		holds. In particular $\sup \left\{||u||:u\in \mathcal{N}^{+}_{\lambda} \right\}< \infty$.
		\item[$b)$] for all $w\in \mathcal{N}^{-}_{\lambda}$, we have that
		\begin{equation}\label{N3}
			||w||^{2}<\frac{(\gamma+p)}{(1+\gamma)}\displaystyle \int_{\mathbb{R}^{N}} b|w|^{p+1}
		\end{equation}
		holds and $\sup \left\{||w||:w\in \mathcal{N}^{-}_{\lambda}, \Phi_{\lambda}(w)\leq M \right\}<\infty $ for each $M>0$ given. Moreover 
		$$\inf \left\{||w||:w\in \mathcal{N}^{-}_{\lambda} \right\}>0.$$
	\end{itemize}
	Furthermore, 
	\begin{equation}
		\label{123}
		0>\tilde{J}^{+}_{\lambda}:=\displaystyle\inf_{u\in \mathcal{N}^{+}_{\lambda}}  \Phi_{\lambda}(u)>-\infty~~\mbox{and}~~\tilde{J}^{-}_{\lambda}:=\displaystyle\inf_{w\in \mathcal{N}^{-}_{\lambda}}  \Phi_{\lambda}(w)>-\infty.
	\end{equation}
\end{lemma}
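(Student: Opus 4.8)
The plan is to extract the two pointwise inequalities (\ref{N2}) and (\ref{N3}) algebraically from the Nehari identity $\phi'_{\lambda,u}(1)=0$ together with the defining sign of $\phi''_{\lambda,u}(1)$, and then to convert these into the stated norm bounds by means of the H\"older inequality and the continuous embeddings of Lemma \ref{lemma1}. Recall that $\mathcal{N}^{+}_\lambda,\mathcal{N}^{-}_\lambda\neq\emptyset$ for every $\lambda>0$ by Lemma \ref{L1}, so all the sets in the statement are nonempty.

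For item $a)$, let $u\in\mathcal{N}^{+}_\lambda$. Writing $\phi'_{\lambda,u}(1)=0$ as $\int_{\mathbb{R}^N}b|u|^{p+1}=||u||^2-\lambda\int_{\mathbb{R}^N}a|u|^{1-\gamma}$ and substituting this into $\phi''_{\lambda,u}(1)>0$ to eliminate the $b$-term gives $(p-1)||u||^2<(\gamma+p)\lambda\int_{\mathbb{R}^N}a|u|^{1-\gamma}$, which is (\ref{N2}). For the uniform bound I would estimate, via H\"older with exponents $2/(1+\gamma)$ and $2/(1-\gamma)$ and the embedding $X\hookrightarrow L^2(\mathbb{R}^N)$, that $\int_{\mathbb{R}^N}a|u|^{1-\gamma}\le ||a||_{2/(1+\gamma)}(\int_{\mathbb{R}^N}|u|^2)^{(1-\gamma)/2}\le C||u||^{1-\gamma}$; plugging this into (\ref{N2}) and dividing by $||u||^{1-\gamma}$ yields $||u||^{1+\gamma}\le \frac{C\lambda(\gamma+p)}{p-1}||a||_{2/(1+\gamma)}$, so $\sup\{||u||:u\in\mathcal{N}^{+}_\lambda\}<\infty$.

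For item $b)$, the same elimination applied to $w\in\mathcal{N}^{-}_\lambda$, now using $\phi''_{\lambda,w}(1)<0$ together with $\lambda\int_{\mathbb{R}^N}a|w|^{1-\gamma}=||w||^2-\int_{\mathbb{R}^N}b|w|^{p+1}$, produces $(1+\gamma)||w||^2<(\gamma+p)\int_{\mathbb{R}^N}b|w|^{p+1}$, i.e. (\ref{N3}). Since $b\in L^\infty$ and $X\hookrightarrow L^{p+1}(\mathbb{R}^N)$ (as $p+1<2^\ast$), one has $\int_{\mathbb{R}^N}b|w|^{p+1}\le C||b||_\infty||w||^{p+1}$, so (\ref{N3}) forces $||w||^{p-1}\ge c>0$, giving $\inf\{||w||:w\in\mathcal{N}^{-}_\lambda\}>0$. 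For the conditional supremum I would reduce the energy on $\mathcal{N}_\lambda$ by eliminating $\int_{\mathbb{R}^N}b|w|^{p+1}$, obtaining $\Phi_\lambda(w)=\frac{p-1}{2(p+1)}||w||^2-\lambda\frac{p+\gamma}{(1-\gamma)(p+1)}\int_{\mathbb{R}^N}a|w|^{1-\gamma}$; bounding the last integral by $C||w||^{1-\gamma}$ as above, the hypothesis $\Phi_\lambda(w)\le M$ becomes $\frac{p-1}{2(p+1)}||w||^2-C'||w||^{1-\gamma}\le M$, and since $1-\gamma<2$ the left-hand side is coercive in $||w||$, whence the claimed bound.

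The same reduced energy identity settles (\ref{123}). For every $w\in\mathcal{N}^{-}_\lambda$ (and likewise $u\in\mathcal{N}^{+}_\lambda$) it gives $\Phi_\lambda(w)\ge g(||w||)$ with $g(t)=\frac{p-1}{2(p+1)}t^2-C't^{1-\gamma}$, and $g$ attains a finite minimum on $(0,\infty)$ because $1-\gamma<2$; hence $\tilde J^{+}_\lambda,\tilde J^{-}_\lambda>-\infty$. Finally $\tilde J^{+}_\lambda<0$ follows because, by Proposition \ref{prop21}$(I)$, each fiber $\phi_{\lambda,u}$ is strictly decreasing on $(0,t^{+}_\lambda(u)]$ while $\phi_{\lambda,u}(0^{+})=0$ (the term $t^{1-\gamma}$ dominates as $t\to 0^{+}$), so every point of $\mathcal{N}^{+}_\lambda$ has strictly negative energy; since $\mathcal{N}^{+}_\lambda\neq\emptyset$, any such point furnishes a strictly negative upper bound for $\tilde J^{+}_\lambda$. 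The only step needing genuine care is the conditional supremum in $b)$: one must retain the $\int_{\mathbb{R}^N}a|w|^{1-\gamma}$ term in the energy rather than discarding it, and exploit $1-\gamma<2$ to extract coercivity from $\Phi_\lambda(w)\le M$.
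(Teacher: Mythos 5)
Your proof is correct, and for the main body of the lemma it is essentially the paper's proof: items $a)$ and $b)$ are obtained there by exactly your algebraic elimination through the Nehari identity $\phi'_{\lambda,u}(1)=0$ combined with the sign of $\phi''_{\lambda,u}(1)$, followed by H\"older and the embeddings of Lemma \ref{lemma1}, and the conditional supremum in $b)$ is proved by the same reduced-energy coercivity inequality $\bigl(\tfrac{1}{2}-\tfrac{1}{p+1}\bigr)\|w\|^{2}+\lambda\bigl(\tfrac{1}{p+1}-\tfrac{1}{1-\gamma}\bigr)C\|w\|^{1-\gamma}\leq M$. Where you genuinely diverge is in \eqref{123}. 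For $\tilde{J}^{\pm}_{\lambda}>-\infty$ the paper takes a minimizing sequence, uses the boundedness from $a)$ (resp. the conditional bound from $b)$) and the weak lower semicontinuity of $\Phi_{\lambda}$ from Lemma \ref{lemma2}; you instead note the pointwise bound $\Phi_{\lambda}\geq g(\|\cdot\|)$ on all of $\mathcal{N}_{\lambda}$ with $g(t)=\tfrac{p-1}{2(p+1)}t^{2}-C't^{1-\gamma}$ bounded below, which is more elementary (no weak compactness or semicontinuity needed) and even gives an explicit lower bound; it also sidesteps the slight looseness in the paper, where the reduced identity is applied to a weak limit that is not known to lie in $\mathcal{N}^{+}_{\lambda}$. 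For $\tilde{J}^{+}_{\lambda}<0$ the paper substitutes \eqref{N2} into the reduced energy and gets the quantitative estimate $\Phi_{\lambda}(u)<-\tfrac{(1+\gamma)(p-1)}{2(1-\gamma)(p+1)}\|u\|^{2}$ on $\mathcal{N}^{+}_{\lambda}$, whereas you use the fiber geometry: $\phi_{\lambda,u}$ strictly decreasing on $(0,t^{+}_{\lambda}(u)]$ with $\phi_{\lambda,u}(0^{+})=0$. Both are valid; the only point to state carefully in your version is that Proposition \ref{prop21}$(I)$ covers only the case $\int_{\mathbb{R}^{N}}b|u|^{p+1}>0$, so for $u\in\mathcal{N}^{+}_{\lambda}$ with $\int_{\mathbb{R}^{N}}b|u|^{p+1}\leq 0$ you should invoke the first assertion of Proposition \ref{prop21} (unique critical point, hence $\phi'_{\lambda,u}<0$ before it) --- the conclusion is identical, while the paper's algebraic route treats both cases at once.
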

\begin{proof} Item $a)$ is a consequence of $\phi^{''}_{\lambda,u}(1)>0$, H\"older and Sobolev embedding. The inequalities \eqref{N3} of $b)$ and $\inf \left\{||w||:w\in \mathcal{N}^{-}_{\lambda} \right\}>0$ are direct consequences of $\phi^{''}_{\lambda,u}(1)<0$, H\"older and Sobolev embedding. Now fix $M>0$ and $w\in \mathcal{N}_\lambda^-$ such that $\Phi_{\lambda}(w)\leq M$. By using H\"older and Sobolev embeddings, we obtain
	$$ \left(\frac{1}{2}-\frac{1}{p+1}\right)||w||^{2}+\lambda\left(\frac{1}{p+1}-\frac{1}{1-\gamma}\right)C||w||^{1-\gamma}\leq \Phi_{\lambda}(w)\leq M,$$
	where $C$ is a positive constant. Since $0<1-\gamma<2$, we have $\sup \left\{||w||:w\in \mathcal{N}^{-}_{\lambda}, \Phi_{\lambda}(w)\leq M \right\}<\infty $. 
	
	Now, let us prove the two first inequalities in (\ref{123}). First, let $u_{n}\subset \mathcal{N}^{+}_{\lambda}$ such that $\Phi_{\lambda}(u_{n})\rightarrow \tilde{J}_{\lambda}^{+}$. Thus, if follows from the boundedness of $\mathcal{N}^{+}_{\lambda}$ proved in $a)$ that, up to a subsequence, $u_{n}\rightharpoonup u$ in $X$ and hence $-\infty <\Phi_{\lambda}(u)\leq \liminf \Phi_{\lambda}(u_{n})=\tilde{J}_{\lambda}^{+}$. To show the first inequality, we use (\ref{N2}) in the expression of $\Phi_{\lambda}(u)$ to infer that
	\begin{align*}
		\Phi_{\lambda}(u)&=\left(\frac{p-1}{2(p+1)}\right)||u||^{2}-\lambda\left(\frac{\gamma+p}{(p+1)(1-\gamma)}\right) \int_{\mathbb{R}^{N}}a(x)|u|^{1-\gamma}dx \\
		&< \left(\frac{p-1}{2(p+1)}\right)||u||^{2}
		-\left(\frac{(\gamma+p)(p-1)}{(p+1)(1-\gamma)(\gamma+p)}\right)||u||^{2}~~~~~~~\\
		&=-\left(\frac{(1+\gamma)(p-1)}{2(1-\gamma)(p+1)}\right)||u||^{2}<0
	\end{align*}
	holds, that is, $\tilde{J}_{\lambda}^{+}<0$.
	
	In a similar way we can prove that $-\infty <\Phi_{\lambda}(w)\leq \liminf \Phi_{\lambda}(w_{n})=\tilde{J}_{\lambda}^{-}$. This ends the proof.
\end{proof}
\fim
\vspace{0.4cm}

 Now we show that the infimum value is achieved in both Nehari manifolds $ \mathcal{N}^{+}_{\lambda}$ and $\mathcal{N}^{-}_{\lambda}$.

\begin{lemma}\label{LL2} Let $0< \lambda < \lambda_\ast$. Then there exist $u_{\lambda}\in \mathcal{N}^{+}_{\lambda}$ and $w_{\lambda}\in \mathcal{N}^{-}_{\lambda}$ such that $\Phi_{\lambda}(u_{\lambda})=\tilde{J}_{\lambda}^{+}$ and $\Phi_{\lambda}(w_{\lambda})=\tilde{J}_{\lambda}^{-}$.
\end{lemma}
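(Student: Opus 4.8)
The plan is to handle the two components separately: in each case I take a minimizing sequence, extract a weak limit, and upgrade weak to strong convergence (or project and compare energies) by exploiting the fiber structure together with the compactness already available. The recurring mechanism is that, by Lemma \ref{lemma1}, a bounded sequence $v_n \rightharpoonup v$ in $X$ satisfies $\int_{\mathbb{R}^N} a|v_n|^{1-\gamma}\to\int_{\mathbb{R}^N} a|v|^{1-\gamma}$ (exactly as in the proof of Lemma \ref{lemma2}) and $\int_{\mathbb{R}^N} b|v_n|^{p+1}\to\int_{\mathbb{R}^N} b|v|^{p+1}$, so that inside $\Phi_\lambda$ only the quadratic term $\tfrac12\|\cdot\|^2$ can drop in the limit, while the singular and superlinear terms pass to the limit.

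First I would treat $\mathcal{N}^{+}_{\lambda}$. Take $\{u_n\}\subset\mathcal{N}^{+}_{\lambda}$ with $\Phi_\lambda(u_n)\to\tilde{J}^{+}_{\lambda}$; by Lemma \ref{LL1}(a) it is bounded, so $u_n\rightharpoonup u_\lambda$ with $u_\lambda\ge 0$. Weak lower semicontinuity and the convergence above give $\Phi_\lambda(u_\lambda)\le\tilde{J}^{+}_{\lambda}<0=\Phi_\lambda(0)$, hence $u_\lambda\neq 0$. Since $0<\lambda<\lambda_\ast$, the projection $t^{+}_{\lambda}(u_\lambda)$ is well defined by Propositions \ref{prop21} and \ref{lambdava}, and $t^{+}_{\lambda}(u_\lambda)u_\lambda\in\mathcal{N}^{+}_{\lambda}$, so $\Phi_\lambda(t^{+}_{\lambda}(u_\lambda)u_\lambda)\ge\tilde{J}^{+}_{\lambda}$. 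I would then rule out loss of norm by contradiction: set $\delta:=\lim_n\|u_n\|^2-\|u_\lambda\|^2$ and suppose $\delta>0$. On one hand $\Phi_\lambda(u_\lambda)=\tilde{J}^{+}_{\lambda}-\tfrac{\delta}{2}<\tilde{J}^{+}_{\lambda}$. On the other hand, writing $H(t):=\lim_n\phi_{\lambda,u_n}(t)$ one has $\phi_{\lambda,u_\lambda}(t)=H(t)-\tfrac{\delta}{2}t^2$ with $H'(1)=0$; comparing the two fibers (and using that $\mathcal{N}^{0}_{\lambda}=\emptyset$ forces $H''(1)>0$, so $\phi_{\lambda,u_\lambda}$ is decreasing on $(0,1]$) yields $1\le t^{+}_{\lambda}(u_\lambda)$. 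Since $\phi_{\lambda,u_\lambda}$ is decreasing on $(0,t^{+}_{\lambda}(u_\lambda)]$, this gives $\Phi_\lambda(u_\lambda)=\phi_{\lambda,u_\lambda}(1)\ge\phi_{\lambda,u_\lambda}(t^{+}_{\lambda}(u_\lambda))\ge\tilde{J}^{+}_{\lambda}$, a contradiction. Thus $u_n\to u_\lambda$ in $X$, so $\phi'_{\lambda,u_\lambda}(1)=0$ and $\phi''_{\lambda,u_\lambda}(1)\ge 0$; as $\mathcal{N}^{0}_{\lambda}=\emptyset$ for $\lambda<\lambda_\ast$ by Lemma \ref{L1}, the latter is strict and $u_\lambda\in\mathcal{N}^{+}_{\lambda}$ with $\Phi_\lambda(u_\lambda)=\tilde{J}^{+}_{\lambda}$.

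For $\mathcal{N}^{-}_{\lambda}$, take $\{w_n\}\subset\mathcal{N}^{-}_{\lambda}$ with $\Phi_\lambda(w_n)\to\tilde{J}^{-}_{\lambda}$. Since $\Phi_\lambda(w_n)$ is eventually bounded above, Lemma \ref{LL1}(b) gives boundedness, so $w_n\rightharpoonup w_\lambda$; combining \eqref{N3}, the bound $\inf\{\|w\|:w\in\mathcal{N}^{-}_{\lambda}\}>0$, and $\int_{\mathbb{R}^N} b|w_n|^{p+1}\to\int_{\mathbb{R}^N} b|w_\lambda|^{p+1}$ shows $\int_{\mathbb{R}^N} b|w_\lambda|^{p+1}>0$, i.e. $w_\lambda\in Z^{+}$, so $t^{-}_{\lambda}(w_\lambda)$ is well defined and $t^{-}_{\lambda}(w_\lambda)w_\lambda\in\mathcal{N}^{-}_{\lambda}$. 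The decisive point is that for each $n$ the value $t=1$ maximizes $\phi_{\lambda,w_n}$ on $[t^{+}_{\lambda}(w_n),\infty)$, so $\phi_{\lambda,w_n}(s)\le\phi_{\lambda,w_n}(1)=\Phi_\lambda(w_n)$ whenever $s\ge t^{+}_{\lambda}(w_n)$. Taking $s=t^{-}_{\lambda}(w_\lambda)$ and checking, via the same fiber comparison (the positivity set of the limiting fiber's derivative contains that of $\phi'_{\lambda,w_\lambda}$), that $t^{-}_{\lambda}(w_\lambda)>\lim_n t^{+}_{\lambda}(w_n)$, one has $s\ge t^{+}_{\lambda}(w_n)$ for large $n$; weak lower semicontinuity then gives $\Phi_\lambda(t^{-}_{\lambda}(w_\lambda)w_\lambda)\le\liminf_n\Phi_\lambda(s\,w_n)\le\liminf_n\Phi_\lambda(w_n)=\tilde{J}^{-}_{\lambda}$, and since the reverse inequality holds by membership in $\mathcal{N}^{-}_{\lambda}$, the point $w_\lambda:=t^{-}_{\lambda}(w_\lambda)w_\lambda$ realizes $\tilde{J}^{-}_{\lambda}$.

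The hard part is the $\mathcal{N}^{-}_{\lambda}$ case: unlike $\mathcal{N}^{+}_{\lambda}$ it is unbounded and its points are \emph{local maxima} of the fibers, so weak lower semicontinuity points in the unfavorable direction and one cannot simply pass to the limit in the constraint. The crux is therefore to locate the projection parameter $t^{-}_{\lambda}(w_\lambda)$ correctly relative to the thresholds $t^{+}_{\lambda}(w_n)$ and to use the fiber-maximality at $t=1$. The non-differentiability coming from the singular term is not itself the obstruction here, since that term converges strongly and enters only through continuous quantities in the fiber comparison, exactly as in Lemma \ref{lemma2}.
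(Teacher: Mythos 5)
Your proposal is correct, and while your $\mathcal{N}^{+}_{\lambda}$ half is essentially the paper's argument in different bookkeeping, your $\mathcal{N}^{-}_{\lambda}$ half takes a genuinely different route. The paper proves strong convergence of the minimizing sequence in both cases. For $\mathcal{N}^{+}_{\lambda}$ it evaluates $\phi'_{\lambda,u_n}$ at $t^{+}_{\lambda}(u_\lambda)$ to conclude $1=t^{+}_{\lambda}(u_n)<t^{+}_{\lambda}(u_\lambda)$ and contradicts the definition of $\tilde{J}^{+}_{\lambda}$; you instead quantify the energy drop as $\delta/2$ and contradict $\Phi_\lambda(u_\lambda)\ge \phi_{\lambda,u_\lambda}(t^{+}_{\lambda}(u_\lambda))\ge \tilde{J}^{+}_{\lambda}$ --- same mechanism, minor variant. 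For $\mathcal{N}^{-}_{\lambda}$, however, the paper invokes the Brezis--Lieb lemma to write $\tilde{J}^{-}_{\lambda}=\Phi_\lambda(w_\lambda)+\theta^2/2$, $\phi'_{\lambda,w_\lambda}(1)=-\theta^2$, $\phi''_{\lambda,w_\lambda}(1)\le-\theta^2$, locates $t^{-}_{\lambda}(w_\lambda)\in(0,1)$, and reaches a contradiction through the auxiliary function $g(t)=\phi_{\lambda,w_\lambda}(t)+\theta^2t^2/2$, which is increasing on $[t^{-}_{\lambda},1]$. You avoid Brezis--Lieb and strong convergence altogether: you project the weak limit via $s=t^{-}_{\lambda}(w_\lambda)$, check $s\ge t^{+}_{\lambda}(w_n)$ for large $n$ by the fiber comparison (for $\delta>0$ one has $t^{+}_{H}<s<1$, so $H'(s)>0$ and hence $\phi'_{\lambda,w_n}(s)>0$ eventually; for $\delta=0$, $s=1$ trivially works), use maximality of $t=1$ on $[t^{+}_{\lambda}(w_n),\infty)$ to get $\Phi_\lambda(sw_n)\le\Phi_\lambda(w_n)$, and finish by weak lower semicontinuity. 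This is shorter and more elementary; what it gives up is the conclusion that the minimizing sequence converges strongly and that its weak limit itself (rather than a dilation $sw_\lambda$ of it) is the minimizer --- information the paper's proof produces but the lemma as stated does not require.

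One imprecision you should repair: in the $\mathcal{N}^{+}_{\lambda}$ case you justify $H''(1)>0$ by ``$\mathcal{N}^{0}_{\lambda}=\emptyset$,'' but $H$ is the fiber of the limiting data $\bigl(\lim_n\|u_n\|^2,\int_{\mathbb{R}^N} a|u_\lambda|^{1-\gamma},\int_{\mathbb{R}^N} b|u_\lambda|^{p+1}\bigr)$, which need not be realized by any element of $X_{+}$, so the emptiness of $\mathcal{N}^{0}_{\lambda}$ does not literally apply to it. Two immediate fixes: (i) use the explicit formula \eqref{E2}: degeneracy of $H$ at $t=1$ would force $\lambda$ to equal $C(\gamma,p)E^{\frac{p+\gamma}{p-1}}B^{-\frac{1+\gamma}{p-1}}A^{-1}$ with $E=\|u_\lambda\|^2+\delta>\|u_\lambda\|^2$, hence $\lambda>\lambda(u_\lambda)\ge\lambda_{\ast}$, a contradiction; or (ii) bypass $H''(1)$ entirely: each $u_n\in\mathcal{N}^{+}_{\lambda}$ has $t=1$ as the first critical point of $\phi_{\lambda,u_n}$, so $\phi'_{\lambda,u_n}\le 0$ on $(0,1]$, hence $H'\le 0$ there and $\phi'_{\lambda,u_\lambda}(t)=H'(t)-\delta t<0$ on $(0,1]$, which already yields $1<t^{+}_{\lambda}(u_\lambda)$. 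With either repair your argument is complete.
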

\begin{proof} First, we will show that there exists $u_{\lambda}\in \mathcal{N}^{+}_{\lambda}$ be such that $\Phi_{\lambda}(u_{\lambda})=\tilde{J}_{\lambda}^{+}$. Let $\left\{u_{n}\right\}\subset \mathcal{N}^{+}_{\lambda}$ such that $\Phi_{\lambda}(u_{n})\rightarrow \tilde{J}_{\lambda}^{+}$. So, it follows from Lemma \ref{LL1} a) that, up to a subsequence, $u_{n}\rightharpoonup u_{\lambda}$ in $X$ and $u_{\lambda}\geq 0$. Suppose on the contrary that $u_{\lambda}=0$, then  $0=\Phi_{\lambda}(u_{\lambda})\leq \liminf \Phi_{\lambda}(u_{n})=\tilde{J}_{\lambda}^{+}<0$, which is impossible, that is, $u_{\lambda}\neq 0$ and so $u_{\lambda}\in X_{+}$. 
	
	Let us prove that $u_{\lambda}\in \mathcal{N}^{+}_{\lambda}$. First, we claim that $\left\{u_{n}\right\}$ converges strongly to $u_{\lambda}$ in $X$. On the contrary, we would have that $||u_{\lambda}||<\liminf ||u_{n}||$ and thus
	$$\liminf_{n\to \infty}\phi^{'}_{\lambda,u_{n}}(t^{+}_{\lambda}(u_{\lambda})u_{n})>\phi^{'}_{\lambda,u_{\lambda}}(t^{+}_{\lambda}(u_{\lambda})u_{\lambda})=0,$$
	which implies that $\phi^{'}_{\lambda,u_{n}}(t^{+}_{\lambda}(u)u_{n})>0$ for sufficiently large $n$. It follows from Proposition \ref{prop21} and Lemma \ref{L1} applied to the fiber map $\phi_{\lambda,u_{n}}$ that $1=t^{+}_{\lambda}(u_{n})<t^{+}_{\lambda}(u_{\lambda})$ holds for larger $n$. Therefore, by coming back to the fiber map $\phi_{\lambda,u_{\lambda}}$, we obtain from  Proposition \ref{prop21} again that  $\Phi_{\lambda}(t^{+}_{\lambda}(u_{\lambda})u_{\lambda})<\Phi_{\lambda}(u_{\lambda})$ and consequently
	$$\tilde{J}_{\lambda}\leq J^{+}_{\lambda}(u)=\Phi_{\lambda}(t^{+}_{\lambda}(u)u)<\liminf \Phi_{\lambda}(u_{n})=\tilde{J}^{+}_{\lambda},$$
	which is an absurd, that is, $u_{n}\rightarrow u$ in $X$ and hence
	\begin{equation}
		\label{124}
		\phi^{'}_{\lambda,u_{\lambda}}(1)=\lim_{n \to \infty} \phi^{'}_{\lambda,u_{n}}(1)=0~~\mbox{and}~~\phi^{''}_{\lambda,u_{\lambda}}(1)=\lim_{n \to \infty} \phi^{''}_{\lambda,u_{n}}(1)\geq 0.
	\end{equation}
	Since from Lemma \ref{L1} $b)$ we have that $\mathcal{N}_{\lambda}^{0}=\emptyset$ for $0< \lambda < \lambda_\ast$, we must conclude that $u_{\lambda}\in \mathcal{N}_{\lambda}^{+}$ and $\Phi_{\lambda}(u_{\lambda})=\tilde{J}_{\lambda}^{+}$. 
	
	Next, let us prove that there exists $w_{\lambda}\in \mathcal{N}^{-}_{\lambda}$ for which $\Phi_{\lambda}(w_{\lambda})=\tilde{J}_{\lambda}^{-}$ holds. Let $\left\{w_{n}\right\}\subset \mathcal{N}^{-}_{\lambda}$ be such that $\Phi_{\lambda}(w_{n})\rightarrow \tilde{J}_{\lambda}^{-}$. As above, we have that  $w_{n}\rightharpoonup w_{\lambda}$ in $X$ and $w_{\lambda}\geq 0$. Assume on the contrary that $w_{\lambda}=0$ then, from Lemma \ref{N2} $b)$ we obtain the absurd
	$$0<\inf \left\{||w||:w\in \mathcal{N}^{-}_{\lambda} \right\}\leq \liminf_{n\to \infty} ||w_{n}||^{2}  \leq \liminf_{n\to \infty} \frac{(\gamma+p)}{(1+\gamma)} \displaystyle \int_{\mathbb{R}^{N}} b|w_n|^{p+1}=0$$
	where the last equality follows from the compact embedding $X$ into $L^{p+1}(\mathbb{R}^N)$, hence $w_{\lambda} \neq 0$ and so $w_{\lambda}\in X_{+}$. By repeating the above arguments, we have $\int b|w_{\lambda}|^{p+1}>0$.
	
	We claim that $\left\{w_{n}\right\}$ converges strongly to $w_{\lambda}$ in $X$. Suppose not. Then we may assume that $||w_{n}-w_{\lambda}||\rightarrow \theta>0$ and apply  Brezis-Lieb lemma to infer that
	$$\tilde{J}_{\lambda}^{-}=\Phi_{\lambda}(w_{\lambda})+\frac{\theta^{2}}{2}, ~\phi^{'}_{\lambda,w_{\lambda}}(1)+\theta^{2}=0,~\mbox{and}~\phi^{''}_{\lambda,w_{\lambda}}+\theta^{2}\leq 0$$
	holds. So, we would have  $\phi^{'}_{\lambda,w_{\lambda}}(1)<0$ and $\phi^{''}_{\lambda,w_{\lambda}}(1)<0$. As a consequence of Proposition \ref{prop21} and  Lemma \ref{L1},  there exists a $t_{\lambda}^{-}\in (0,1)$ such that $\phi^{'}_{\lambda,w_{\lambda}}(t_{\lambda}^{-})=0,~\phi^{''}_{\lambda,w_{\lambda}}(t_{\lambda}^{-})<0$ and $t_{\lambda}^{-}w_{\lambda}\in \mathcal{N}^{-}_{\lambda}$. 
	
	By setting $g(t)=\phi_{\lambda,w_{\lambda}}(t)+\frac{\theta^{2}t^{2}}{2}$ for $t>0$ we conclude that   $0<t_{\lambda}^{-}<1,~g'(1)=0$ and $g^{'}(t_{\lambda}^{-})=\theta^{2}t_{\lambda}^{-}>0$, which together with  Proposition \ref{prop21} lead us to conclude that $g$ is increasing on $[t_{\lambda}^{-},1]$. Thus, we have
	$$\tilde{J}_{\lambda}^{-}=\lim \Phi_{\lambda}(w_{n})=g(1)>g(t_{\lambda}^{-})>\phi_{\lambda,w_{\lambda}}(t_{\lambda}^{-})=\Phi_{\lambda}(t_{\lambda}^{-}w_{\lambda})\geq \tilde{J}_{\lambda}^{-},$$
	which is a contradiction, that is $\theta =0$ and $\left\{w_{n}\right\}$ converges strongly to $w_{\lambda}$ in $X$. After this,  we obtain that $w_{\lambda}\in \mathcal{N}_{\lambda}^{-}$ and $\Phi_{\lambda}(w_{\lambda})=\tilde{J}_{\lambda}^{-}$, as done at (\ref{124}). This ends the proof.
\end{proof}
\fim
\vspace{0.4cm}

\begin{lemma}\label{LL3} Let $0<\lambda < \lambda_\ast$. Then there exists $\epsilon_{0}>0$ such that: 
	\begin{itemize}
		\item[a)] $\Phi_{\lambda}(u_{\lambda}+\epsilon \psi )\geq \Phi_{\lambda}(u_{\lambda})$,
		\item[b)] $t_{\lambda}^{-}(w_{\lambda}+\epsilon \psi )\rightarrow 1$ as $\epsilon \downarrow 0$, where $t_{\lambda}^{-}(w_{\lambda}+\epsilon \psi )$ is the unique positive real number, given by Proposition $\ref{prop21}$, satisfying $t_{\lambda}^{-}(w_{\lambda}+\epsilon \psi )(w_{\lambda}+\epsilon\psi )\in \mathcal{N}^{-}_{\lambda}$
	\end{itemize}
	for each $\psi \in X_{+}$ given and for each $0\leq \epsilon \leq \epsilon_{0}$.
\end{lemma}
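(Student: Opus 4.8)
The plan is to prove both statements by working directly with the fiber maps $\phi_{\lambda,v}$ and exploiting that $\mathcal{N}^0_\lambda=\emptyset$ for $0<\lambda<\lambda_\ast$, which keeps us in case $(I)$ of Proposition \ref{prop21} (or in the alternative $\int b|v|^{p+1}\le 0$) for every $v\in X_+$, since $\lambda<\lambda_\ast\le\lambda(v)$ for all $v\in Z^+$. Throughout I fix $\psi\in X_+$ and write $v_\epsilon=u_\lambda+\epsilon\psi$ (resp.\ $w_\lambda+\epsilon\psi$); as $u_\lambda,w_\lambda,\psi\ge 0$ these lie in $X_+$, and from the argument in Lemma \ref{lemma2} the three coefficients $\|v\|^2$, $\int a|v|^{1-\gamma}$ and $\int b|v|^{p+1}$ depend continuously on $v$ in $X$. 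I will also use that $u_\lambda\in\mathcal{N}^+_\lambda$ forces $t^+_\lambda(u_\lambda)=1$, that $w_\lambda\in\mathcal{N}^-_\lambda$ forces $t^-_\lambda(w_\lambda)=1$, and that $w_\lambda\in\mathcal{N}^-_\lambda$ yields $\int b|w_\lambda|^{p+1}>0$ by \eqref{N3}.

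For part $a)$, the key observation is that $t\mapsto\phi''_{\lambda,v}(t)$ is strictly decreasing whenever $\int b|v|^{p+1}>0$, so the sign condition $\phi''_{\lambda,v_\epsilon}(1)>0$ places $t=1$ strictly to the left of the inflection point, hence strictly below $t^-_\lambda(v_\epsilon)$. Since $\phi''_{\lambda,u_\lambda}(1)>0$, continuity of the coefficients gives $\phi''_{\lambda,v_\epsilon}(1)>0$ for $\epsilon$ small, and therefore $1\in(0,t^-_\lambda(v_\epsilon)]$ (in the alternative $\int b|v_\epsilon|^{p+1}\le 0$ the map $\phi''_{\lambda,v_\epsilon}$ is everywhere positive and no $t^-$ exists, so the same conclusion is immediate). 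On $[0,t^-_\lambda(v_\epsilon)]$ the fiber map attains its minimum at $t^+_\lambda(v_\epsilon)$, whence $\Phi_\lambda(u_\lambda+\epsilon\psi)=\phi_{\lambda,v_\epsilon}(1)\ge\phi_{\lambda,v_\epsilon}(t^+_\lambda(v_\epsilon))=\Phi_\lambda\big(t^+_\lambda(v_\epsilon)v_\epsilon\big)\ge\tilde J^+_\lambda=\Phi_\lambda(u_\lambda)$, the last inequality because $t^+_\lambda(v_\epsilon)v_\epsilon\in\mathcal{N}^+_\lambda$. This proves $a)$ with $\epsilon_0$ determined by the range of $\epsilon$ for which $\phi''_{\lambda,v_\epsilon}(1)>0$.

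For part $b)$, I argue by sequences. Since $\int b|w_\lambda|^{p+1}>0$, continuity gives $w_\lambda+\epsilon\psi\in Z^+$ for small $\epsilon$, so $t^-_\lambda(w_\lambda+\epsilon\psi)$ is well defined. Take $\epsilon_n\downarrow 0$, set $t_n=t^-_\lambda(w_\lambda+\epsilon_n\psi)$ and $v_n=w_\lambda+\epsilon_n\psi\to w_\lambda$ strongly in $X$. Dividing the identity $\phi'_{\lambda,v_n}(t_n)=0$ by $t_n$ gives $\|v_n\|^2=\lambda t_n^{-\gamma-1}\int a|v_n|^{1-\gamma}+t_n^{p-1}\int b|v_n|^{p+1}$; since the coefficients converge and $\int b|w_\lambda|^{p+1}>0$, neither $t_n\to 0$ (the $t^{-\gamma-1}$ term would blow up) nor $t_n\to\infty$ (the $t^{p-1}$ term would blow up) is possible, so $\{t_n\}$ is bounded away from $0$ and $\infty$. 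Along any convergent subsequence $t_n\to t^*\in(0,\infty)$, passing to the limit yields $\phi'_{\lambda,w_\lambda}(t^*)=0$ and $\phi''_{\lambda,w_\lambda}(t^*)\le 0$; because $\mathcal{N}^0_\lambda=\emptyset$ the degenerate case $\phi''_{\lambda,w_\lambda}(t^*)=0$ is excluded, so $t^*w_\lambda\in\mathcal{N}^-_\lambda$ and $t^*=t^-_\lambda(w_\lambda)=1$. The limit being subsequence-independent, $t^-_\lambda(w_\lambda+\epsilon\psi)\to 1$.

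The main obstacle is the non-differentiability coming from the singular term $\int a|v|^{1-\gamma}$: I cannot differentiate $t^\pm_\lambda$ in the variable $v$, so the whole argument must be organized around continuity of the coefficients and compactness of the values $t_n$, with the emptiness of $\mathcal{N}^0_\lambda$ (valid precisely because $\lambda<\lambda_\ast$) doing the work of ruling out degenerate limits. In part $a)$ the delicate point is to guarantee that $t=1$ does not slip past $t^-_\lambda(v_\epsilon)$ as $\epsilon$ grows, which is exactly what the strict monotonicity of $t\mapsto\phi''_{\lambda,v}(t)$, together with $\phi''_{\lambda,v_\epsilon}(1)>0$, secures.
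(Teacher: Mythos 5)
Your proof is correct. Part $a)$ follows the paper's own argument essentially verbatim: continuity of $\epsilon\mapsto\phi''_{\lambda,u_\lambda+\epsilon\psi}(1)$ together with $\phi''_{\lambda,u_\lambda}(1)>0$ gives the sign for small $\epsilon$, the fiber-map geometry of Proposition \ref{prop21} yields $\phi_{\lambda,v_\epsilon}(1)\geq\phi_{\lambda,v_\epsilon}(t^+_\lambda(v_\epsilon))$, and minimality of $u_\lambda$ over $\mathcal{N}^+_\lambda$ (Lemma \ref{LL2}) closes the chain; you merely make explicit the step the paper leaves implicit, namely why $\phi''_{\lambda,v_\epsilon}(1)>0$ forces $1<t^-_\lambda(v_\epsilon)$ (strict monotonicity of $t\mapsto\phi''_{\lambda,v_\epsilon}(t)$ when $\int b|v_\epsilon|^{p+1}>0$).

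Part $b)$ is where you genuinely diverge. The paper applies the implicit function theorem to the smooth auxiliary map $F(t,e,f,g)=et-\lambda ft^{-\gamma}-gt^{p}$, treating the three integrals $\|v\|^2$, $\int a|v|^{1-\gamma}$, $\int b|v|^{p+1}$ as real parameters; the nondegeneracy hypothesis is exactly $\partial F/\partial t=\phi''_{\lambda,w_\lambda}(1)<0$, i.e.\ $w_\lambda\in\mathcal{N}^-_\lambda$, and the conclusion is a locally continuous (indeed $C^\infty$) parametrization $t_\lambda^-(w_\lambda+\theta\psi)$ near $\theta=0$. You instead run a sequential compactness argument: the Nehari identity $\|v_n\|^2=\lambda t_n^{-\gamma-1}\int a|v_n|^{1-\gamma}+t_n^{p-1}\int b|v_n|^{p+1}$ pins $t_n$ away from $0$ and $\infty$, limits of subsequences are critical points of $\phi_{\lambda,w_\lambda}$ with $\phi''\leq 0$, and the emptiness of $\mathcal{N}^0_\lambda$ for $\lambda<\lambda_\ast$ (Lemma \ref{L1}) upgrades this to $\phi''<0$, forcing the limit to be $t^-_\lambda(w_\lambda)=1$. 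Both arguments are sound for this lemma. What each buys: yours is more elementary (no IFT) and exploits the global structure of the regime $\lambda<\lambda_\ast$; the paper's is purely local, needing only nondegeneracy at the single point $w_\lambda$ rather than $\mathcal{N}^0_\lambda=\emptyset$, which is precisely why the same IFT device is recycled later (Lemma \ref{LL3} $b)$ is invoked again in the proof of Proposition \ref{T2}) in the regime $\lambda>\lambda_\ast$, where $\mathcal{N}^0_\lambda\neq\emptyset$ and your exclusion-of-degenerate-limits step would not be available. The paper's version also delivers continuity of $\epsilon\mapsto t^-_\lambda(w_\lambda+\epsilon\psi)$ for small $\epsilon$, slightly more than the bare convergence you prove, though only the convergence is used in Lemma \ref{LL4}.
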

\begin{proof} Let $\psi$ be a function in $X_{+}$. First,  let us prove $a)$. It follows from (\ref{17}) that
	$$\phi{''}_{\lambda,u_{\lambda}+\epsilon \psi}(1)=||u_{\lambda}+\epsilon \psi||^{2}+\gamma \lambda\int_{\mathbb{R}^{N}}a(x)|u_{\lambda}+\epsilon \psi|^{1-\gamma}dx-p\int_{\mathbb{R}^{N}}b(x)|u_{\lambda}+\epsilon \psi|^{p+1}dx,~\epsilon \geq 0,$$
	which combined with the continuity of $\phi_{\lambda,u_{\lambda}+\epsilon \psi}(1)$ in $\epsilon\geq 0$ and the fact that $\phi^{''}_{\lambda,u_{\lambda}}(1)>0$, because $u_{\lambda}\in \mathcal{N}^{+}_{\lambda}$, implies that there exists an $\epsilon_{0}>0$ such that $\phi{''}_{\lambda,u_{\lambda}+\epsilon \psi}(1)>0$ for all $0\leq \epsilon \leq \epsilon_{0}$.
	
	Fix $0\leq \epsilon \leq \epsilon_{0}$.  Then from $\phi{''}_{\lambda,u_{\lambda}+\epsilon \psi}(1)>0$, we obtain
	$$\Phi_{\lambda}(u_{\lambda}+\epsilon \psi ) = \phi_{\lambda, u_{\lambda}+\epsilon \psi }(1)\geq \phi_{\lambda, u_{\lambda}+\epsilon \psi }(t_{\lambda}^{+}(u_{\lambda}+\epsilon \psi))=
	\Phi_{\lambda}(t_{\lambda}^{+}(u_{\lambda}+\epsilon \psi)( u_{\lambda}+ \epsilon \psi))\geq \Phi_{\lambda}(u_{\lambda})$$
	where the last inequality follows from Lemma \ref{LL2}, because $u_\lambda,t_{\lambda}^{+}(u_{\lambda}+\epsilon \psi)( u_{\lambda}+ \epsilon \psi) \in \mathcal{N}^{+}_{\lambda}$.
	
	Now we prove $b)$. By defining $F:(0,\infty)\times \mathbb{R}^{3}\rightarrow \mathbb{R}$ by
	$F(t,e,f,g)=et-\lambda ft^{-\gamma}- gt^{p}$, we have that 
	$F$ is a $C^{\infty}$ function,
	$$F(1,e_{1},f_{1},g_{1})=\phi^{'}_{\lambda,w_{\lambda}}(1)=0,$$
	because $w_\lambda \in \mathcal{N}_{\lambda}$,
	and 
	$$\frac{dF}{dt}(1,e_{1},f_{1},g_{1})=\phi^{''}_{\lambda,w_{\lambda}}(1)<0,$$
	due to the fact that 
	$w_\lambda \in \mathcal{N}^{-}_{\lambda}$, where
	$$e_{1}=||w_{\lambda}||^{2},~f_{1}=\int_{\mathbb{R}^{N}}a|w_{\lambda}|^{1-\gamma}~\mbox{and}~ g_{1}=\int_{\mathbb{R}^{N}}b|w_{\lambda}|^{p+1}.$$
	
	Therefore, it follows from the implicit function theorem and from $$F(t_{\lambda}^{-}(w_{\lambda}+\epsilon \psi),||w_{\lambda}+\epsilon \psi ||^{2},\int_{\mathbb{R}^{N}}a(x)|w_{\lambda}+\epsilon \psi |^{1-\gamma}dx,\int_{\mathbb{R}^{N}}b(x)|w_{\lambda}+\epsilon \psi |^{p+1}dx)=0,$$
	thanks to Proposition \ref{prop21}, that 
	$$ t(||w_{\lambda}+\epsilon \psi ||^{2},\int_{\mathbb{R}^{N}}a(x)|w_{\lambda}+\epsilon \psi |^{1-\gamma}dx,\int_{\mathbb{R}^{N}}b(x)|w_{\lambda}+\epsilon \psi |^{p+1}dx)=t_{\lambda}^{-}(w_{\lambda}+\epsilon \psi)$$
	for $\epsilon>0$ small enough, where $t:B\rightarrow A$ is a $C^{\infty}$-function where $A$  and $B$ are open neighborhoods of $1$ and $(e_{1},f_{1},g_{1})$, respectively. The  continuity of $t$ implies the claim. This finishes the proof.
\end{proof}
\fim
\vspace{0.4cm}

 Lemma \ref{LL3} implies
\begin{lemma}\label{LL4} Let $0<\lambda < \lambda_\ast$. Then for each $\psi\in X_{+}$ given, there hold $au_{\lambda}^{-\gamma} \psi, aw_{\lambda}^{-\gamma} \psi \in L^{1}(\mathbb{R}^{N})$,
	\begin{equation}
		\label{128}
		\int_{\mathbb{R}^{N}}\nabla u_{\lambda} \nabla \psi +V(x)u_{\lambda} \psi dx-\int_{\mathbb{R}^{N}}(\lambda a(x)u_{\lambda}^{-\delta}\psi dx+b(x)u_{\lambda}^{p}\psi dx)\geq 0
	\end{equation}
	and
	\begin{equation}
		\label{129}
		\int_{\mathbb{R}^{N}}\nabla u_{\lambda} \nabla \psi +V(x)u_{\lambda} \psi dx-\int_{\mathbb{R}^{N}}(\lambda a(x)u_{\lambda}^{-\delta}\psi dx+b(x)u_{\lambda}^{p}\psi dx)\geq 0
	\end{equation}
	
	In particular, $u_{\lambda}, w_{\lambda}>0$ almost everywhere in $\mathbb{R}^{N}$.
\end{lemma}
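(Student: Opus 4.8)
The goal is to upgrade the weak-minimality information about $u_\lambda$ and $w_\lambda$ from Lemma \ref{LL3} into the variational inequalities \eqref{128}--\eqref{129} and, as a byproduct, to deduce that the singular integrals $a u_\lambda^{-\gamma}\psi$ and $a w_\lambda^{-\gamma}\psi$ are genuinely integrable and that both functions are positive a.e. I would treat $u_\lambda$ and $w_\lambda$ in parallel, since the mechanism is identical once the appropriate monotonicity from Lemma \ref{LL3} is available; the only difference is that for $u_\lambda$ I directly have $\Phi_\lambda(u_\lambda+\epsilon\psi)\geq\Phi_\lambda(u_\lambda)$, whereas for $w_\lambda$ I must first pull back along the fiber using $t_\lambda^-(w_\lambda+\epsilon\psi)\to 1$.

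\textbf{Step 1: a one-sided difference quotient.} Fix $\psi\in X_+$ and $0\leq\epsilon\leq\epsilon_0$. For $u_\lambda$, Lemma \ref{LL3}\,$a)$ gives $\Phi_\lambda(u_\lambda+\epsilon\psi)-\Phi_\lambda(u_\lambda)\geq 0$. I would expand this difference using the explicit form \eqref{14} of $\Phi_\lambda$, dividing by $\epsilon>0$ and letting $\epsilon\downarrow 0$. The quadratic (gradient plus potential) part and the $b$-term are smooth in $\epsilon$ and contribute exactly $\int \nabla u_\lambda\nabla\psi + V u_\lambda\psi$ and $-\int b u_\lambda^p\psi$ in the limit. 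For $w_\lambda$ the analogous starting inequality is $\Phi_\lambda(t_\lambda^-(w_\lambda+\epsilon\psi)(w_\lambda+\epsilon\psi))\geq\Phi_\lambda(w_\lambda)$ combined with $t_\lambda^-(w_\lambda+\epsilon\psi)\to 1$ from Lemma \ref{LL3}\,$b)$; here I would use that $t\mapsto\Phi_\lambda(tw_\lambda)$ has a critical point at $t=1$ (since $w_\lambda\in\mathcal N_\lambda$) to absorb the derivative of $t_\lambda^-$ into a harmless higher-order term, exactly as in Step 3 of Lemma \ref{L2}.

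\textbf{Step 2: Fatou on the singular term.} The crux is the singular integral $-\tfrac{\lambda}{1-\gamma}\int a(|u_\lambda+\epsilon\psi|^{1-\gamma}-u_\lambda^{1-\gamma})$. Since $\psi\geq 0$, for each $x$ the map $\epsilon\mapsto(u_\lambda+\epsilon\psi)^{1-\gamma}$ is nondecreasing, so the difference quotients $\epsilon^{-1}(( u_\lambda+\epsilon\psi)^{1-\gamma}-u_\lambda^{1-\gamma})$ are nonnegative and increase to $(1-\gamma)G(x)\psi(x)$ as $\epsilon\downarrow 0$, where $G=u_\lambda^{-\gamma}$ on $\{u_\lambda>0\}$ and $G=+\infty$ on $\{u_\lambda=0\}$. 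The hard part is precisely this term, because a priori we do not know the limit is finite. Applying Fatou's lemma to the nonnegative integrand gives
\begin{equation*}
\liminf_{\epsilon\downarrow 0}\frac{1}{\epsilon}\int_{\mathbb{R}^N} a\big((u_\lambda+\epsilon\psi)^{1-\gamma}-u_\lambda^{1-\gamma}\big)\geq (1-\gamma)\int_{\mathbb{R}^N} a\,G\,\psi,
\end{equation*}
and this lower bound is exactly what the sign of the inequality in Step 1 tolerates: rearranging $\Phi_\lambda(u_\lambda+\epsilon\psi)-\Phi_\lambda(u_\lambda)\geq 0$ and passing to the $\liminf$ yields
\begin{equation*}
\int_{\mathbb{R}^N}\nabla u_\lambda\nabla\psi+V u_\lambda\psi - \int_{\mathbb{R}^N} b u_\lambda^p\psi \geq \lambda\int_{\mathbb{R}^N} a\,G\,\psi .
\end{equation*}
Because the left side is finite, $\int a\,G\,\psi<\infty$; taking $\psi$ strictly positive on all of $\mathbb{R}^N$ forces $|\{u_\lambda=0\}|=0$, so $G=u_\lambda^{-\gamma}$ a.e., which simultaneously establishes $a u_\lambda^{-\gamma}\psi\in L^1(\mathbb{R}^N)$, the positivity $u_\lambda>0$ a.e., and the inequality \eqref{128}.

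\textbf{Step 3: repeat for $w_\lambda$ and conclude.} The same Fatou argument applied to the $w_\lambda$-inequality from Step 1 delivers integrability of $a w_\lambda^{-\gamma}\psi$, positivity $w_\lambda>0$ a.e., and \eqref{129}. I expect the genuine difficulty to be confined to Step 2: controlling the unknown singular limit via monotonicity and Fatou, and then reading off both integrability and positivity from the finiteness of the competing regular terms. Everything else is the routine expansion of $\Phi_\lambda$ and the bookkeeping of the fiber map already carried out in Lemma \ref{L2}.
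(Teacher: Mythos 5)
Your treatment of $u_\lambda$ and your Step 2 are exactly the paper's argument: the paper also starts from $\Phi_\lambda(u_\lambda+\epsilon\psi)\geq\Phi_\lambda(u_\lambda)$ (Lemma \ref{LL3}\,$a)$), isolates the singular difference quotient, and then invokes the Fatou/monotonicity mechanism of Lemma \ref{L2} with the extended function $G$ to get integrability of $au_\lambda^{-\gamma}\psi$, positivity a.e., and \eqref{128} simultaneously. The gap is in your Step 1 for $w_\lambda$. You propose to control the mismatch between $t_\epsilon:=t_\lambda^-(w_\lambda+\epsilon\psi)$ and $1$ by a Taylor-type argument, "absorbing the derivative of $t_\lambda^-$" into a higher-order term via the criticality $\phi_{\lambda,w_\lambda}'(1)=0$. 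But Lemma \ref{LL3}\,$b)$ gives only continuity, $t_\epsilon\to 1$, not differentiability of $\epsilon\mapsto t_\epsilon$: through the implicit function theorem, that differentiability would require differentiability of $\epsilon\mapsto\int a|w_\lambda+\epsilon\psi|^{1-\gamma}$, which is precisely what is not yet known (it is part of what this lemma establishes). Without it, the error $\phi_{\lambda,w_\lambda}(t_\epsilon)-\phi_{\lambda,w_\lambda}(1)=O\bigl((t_\epsilon-1)^2\bigr)$ cannot be divided by $\epsilon$ and discarded: the only a priori rate, coming from the concavity bound $0\leq\int a\bigl(|w_\lambda+\epsilon\psi|^{1-\gamma}-|w_\lambda|^{1-\gamma}\bigr)\leq\epsilon^{1-\gamma}\int a\psi^{1-\gamma}$, gives $|t_\epsilon-1|=O(\epsilon^{1-\gamma})$, hence $(t_\epsilon-1)^2/\epsilon=O(\epsilon^{1-2\gamma})$, which does not vanish when $\gamma\geq 1/2$.

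The repair is a sign argument rather than a rate argument, and it is what the paper does. Since $w_\lambda\in\mathcal{N}_\lambda^-$, Proposition \ref{prop21} says $t=1$ is the maximum point of $\phi_{\lambda,w_\lambda}$ on $[t_\lambda^+(w_\lambda),\infty)$; as $t_\epsilon\to 1$, for small $\epsilon$ one has $t_\epsilon$ in that interval, so $\Phi_\lambda(w_\lambda)=\phi_{\lambda,w_\lambda}(1)\geq\phi_{\lambda,w_\lambda}(t_\epsilon)=\Phi_\lambda(t_\epsilon w_\lambda)$ with no error term at all. Chaining this with your minimality inequality yields $\Phi_\lambda\bigl(t_\epsilon(w_\lambda+\epsilon\psi)\bigr)\geq\Phi_\lambda(t_\epsilon w_\lambda)$, in which both sides carry the same scale factor; dividing by $\epsilon$, the three difference quotients appear multiplied by $t_\epsilon^2$, $t_\epsilon^{1-\gamma}$, $t_\epsilon^{p+1}$, all tending to $1$, and your Fatou step then goes through verbatim. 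A minor point: your citation of "Step 3 of Lemma \ref{L2}" here is misplaced — that step is the truncation $(u+\epsilon\psi)^+$ used to upgrade the inequality on $X_+$ to an equality on all of $X$, which is the business of Proposition \ref{T1}, not of this lemma.
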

\begin{proof} Let $\psi\in X_{+}$. First, let us prove the  inequality (\ref{128}). After some manipulations, we obtain from Lemma \ref{LL3} item $a)$, that
	$$\lambda \int_{\mathbb{R}^{N}}\frac{a|u_{\lambda}+\epsilon \psi |^{1-\gamma}-a|u_{\lambda}|^{1-\gamma}}{(1-\gamma)\epsilon}dx\leq \frac{||u_{\lambda}+\epsilon \psi ||^{2}-||u_{\lambda}||^{2}}{ 2\epsilon}-\int_{\mathbb{R}^{N}}\frac{b|u_{\lambda}+\epsilon \psi |^{p+1}-b|u_{\lambda}|^{p+1}}{(p+1)\epsilon}dx$$
	holds 
	for sufficiently small $\epsilon>0$. 
	
	By using similar arguments as in the proof of Lemma \ref{L2}, we obtain from the last inequality that $u_{\lambda}>0$ in $\mathbb{R}^{N}$, $au_{\lambda}^{-\gamma} \psi \in L^{1}(\mathbb{R}^{N})$ and 
	$$\int_{\mathbb{R}^{N}}\nabla u_{\lambda} \nabla \psi +V(x)u_{\lambda} \psi dx-\int_{\mathbb{R}^{N}}(\lambda a(x)u_{\lambda}^{-\delta}\psi dx+b(x)u_{\lambda}^{p}\psi dx)\geq 0.$$

To prove \eqref{129}, we note that 
	$$\Phi_{\lambda}(t_{\lambda}^{-}(w_{\lambda}+\epsilon \psi)( w_{\lambda}+ \epsilon \psi) )\geq 
	\Phi_{\lambda}(w_{\lambda}) = \phi_{\lambda, w_{\lambda} }(1)\geq \phi_{\lambda, w_{\lambda} }(t_{\lambda}^{-}(w_{\lambda}+\epsilon \psi))=\Phi_{\lambda}(t_{\lambda}^{-}(w_{\lambda}+\epsilon \psi)w_{\lambda}),$$
	where the first inequality follows from Lemma \ref{LL2} and the second inequality comes from Proposition \ref{prop21}.
	
	After some manipulations, we obtain  from the above inequality that
	\begin{align*}
		t_{\lambda}^{-}(w_{\lambda}+\epsilon \psi)^{2}\frac{||w_{\lambda}+\epsilon \psi ||^{2}-||w_{\lambda}||^{2}}{ 2\epsilon}-t_{\lambda}^{-}(w_{\lambda}+\epsilon \psi)^{p+1}\int_{\mathbb{R}^{N}}\frac{b|w_{\lambda}+\epsilon \psi |^{p+1}-b|w_{\lambda}|^{p+1}}{(p+1)\epsilon}dx\\
		\geq t_{\lambda}^{-}(w_{\lambda}+\epsilon \psi)^{1-\gamma}\lambda\int_{\mathbb{R}^{N}}\frac{a|w_{\lambda}+\epsilon \psi |^{1-\gamma}-a|w_{\lambda}|^{1-\gamma}}{(1-\gamma)\epsilon}dx
	\end{align*}
	holds for $\epsilon>0$ small enough.
	
	So, by applying Lemma \ref{LL3} item $b)$, we obtain $w_{\lambda}>0$ in $\mathbb{R}^{N}$, $aw_{\lambda}^{-\gamma} \psi \in L^{1}(\mathbb{R}^{N})$ and 
	$$\int_{\mathbb{R}^{N}}\nabla w_{\lambda} \nabla \psi +V(x)w_{\lambda} \psi dx-\int_{\mathbb{R}^{N}}(\lambda a(x)w_{\lambda}^{-\delta}\psi dx+b(x)w_{\lambda}^{p}\psi dx)\geq 0$$
	holds. This completes the proof.
\end{proof}
\fim
\vspace{0.4cm}

\begin{proposition}\label{T1} Let $0<\lambda < \lambda_\ast$. Then $u_\lambda\in \mathcal{N}^{+}_{\lambda}$ and $w_\lambda\in \mathcal{N}^{-}_{\lambda}$ are solutions of Problem $(P_{\lambda})$.
\end{proposition}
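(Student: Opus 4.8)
The goal is to upgrade the one-sided variational inequalities (\ref{128}) and (\ref{129}) of Lemma \ref{LL4}, which are only known for nonnegative test functions, into the equalities defining a weak solution of $(P_\lambda)$; this is the singular counterpart of Step 3 in the proof of Lemma \ref{L2}, and I would carry it out first for $u_\lambda$ and then repeat it verbatim for $w_\lambda$. The two ingredients I would lean on are already available: Lemma \ref{LL4} gives $u_\lambda>0$ a.e., $a u_\lambda^{-\gamma}\psi\in L^1(\mathbb{R}^N)$ for every $\psi\in X_+$ (hence, splitting $\psi=\psi^+-\psi^-$, for every $\psi\in X$), and the inequality (\ref{128}); while Lemma \ref{LL2} gives $u_\lambda\in\mathcal{N}^+_\lambda\subset\mathcal{N}_\lambda$, so that the Nehari identity
$$\|u_\lambda\|^2-\lambda\int_{\mathbb{R}^N}a u_\lambda^{1-\gamma}\,dx-\int_{\mathbb{R}^N}b u_\lambda^{p+1}\,dx=0$$
holds.

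The core step is to fix an arbitrary $\psi\in X$ and $\epsilon>0$ and to insert the admissible nonnegative test function $\Psi:=(u_\lambda+\epsilon\psi)^+\in X_+$ into (\ref{128}). Writing $\Psi=(u_\lambda+\epsilon\psi)-(u_\lambda+\epsilon\psi)^-$ and splitting $\mathbb{R}^N=\{u_\lambda+\epsilon\psi>0\}\cup A_\epsilon$ with $A_\epsilon:=\{u_\lambda+\epsilon\psi\le0\}$, the $\epsilon^0$-terms collapse to the Nehari identity above and cancel, leaving
$$0\le\epsilon\Big[(u_\lambda,\psi)-\lambda\int_{\mathbb{R}^N}a u_\lambda^{-\gamma}\psi\,dx-\int_{\mathbb{R}^N}b u_\lambda^{p}\psi\,dx\Big]+T_\epsilon,$$
where $T_\epsilon$ gathers all integrals over $A_\epsilon$. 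The decisive observation is that on $A_\epsilon$ one has $u_\lambda>0$ and $u_\lambda+\epsilon\psi\le0$, so the singular contribution $\lambda\int_{A_\epsilon}a u_\lambda^{-\gamma}(u_\lambda+\epsilon\psi)$, the Dirichlet–potential contribution $-\int_{A_\epsilon}(|\nabla u_\lambda|^2+V u_\lambda^2)$, the portion of the $b$-term where $b\ge0$, and the piece $\int_{A_\epsilon\cap\{b<0\}}b u_\lambda^{p+1}$ are each $\le0$ and may be discarded; what survives is $T_\epsilon\le\epsilon\, r(\epsilon)$ with
$$r(\epsilon)=-\int_{A_\epsilon}(\nabla u_\lambda\nabla\psi+V u_\lambda\psi)\,dx+\int_{A_\epsilon\cap\{b<0\}}b u_\lambda^{p}\psi\,dx.$$

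Because $\nabla u_\lambda\nabla\psi+V u_\lambda\psi$ and $b u_\lambda^{p}\psi$ belong to $L^1(\mathbb{R}^N)$ while $\chi_{A_\epsilon}\to0$ a.e. as $\epsilon\downarrow0$ (since $u_\lambda>0$ a.e.), dominated convergence yields $r(\epsilon)\to0$. Dividing the displayed inequality by $\epsilon>0$ and letting $\epsilon\downarrow0$ then gives
$$\int_{\mathbb{R}^N}\nabla u_\lambda\nabla\psi+V u_\lambda\psi\,dx-\lambda\int_{\mathbb{R}^N}a u_\lambda^{-\gamma}\psi\,dx-\int_{\mathbb{R}^N}b u_\lambda^{p}\psi\,dx\ge0\quad\text{for all }\psi\in X.$$
Since the left-hand side is linear in $\psi$, replacing $\psi$ by $-\psi$ forces equality for every $\psi\in X$, so $u_\lambda$ solves $(P_\lambda)$; running the same computation from (\ref{129}) with $w_\lambda\in\mathcal{N}^-_\lambda$ shows that $w_\lambda$ solves it too. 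I expect the only real difficulty to be exactly the singular term on $A_\epsilon$: since $a u_\lambda^{-\gamma}$ can blow up where $u_\lambda$ is near zero — precisely where $A_\epsilon$ concentrates — no crude estimate is available, and the argument must rely entirely on the algebraic sign $a u_\lambda^{-\gamma}(u_\lambda+\epsilon\psi)\le0$ together with the a.e. positivity and the integrability $a u_\lambda^{-\gamma}\psi\in L^1$ supplied by Lemma \ref{LL4}.
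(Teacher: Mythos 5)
Your proposal is correct and follows exactly the paper's route: the paper also tests the one-sided inequality of Lemma \ref{LL4} with $\Psi_\epsilon=(u_\lambda+\epsilon\psi)^+$, cancels the zeroth-order terms via the Nehari identity $\phi^{'}_{\lambda,u_\lambda}(1)=0$, and then adapts Step 3 of Lemma \ref{L2} (sign discarding on $\{u_\lambda+\epsilon\psi\le 0\}$, vanishing of the remaining terms as $\epsilon\downarrow 0$, and the $\pm\psi$ trick). Your write-up merely spells out the details that the paper compresses into ``adapting the proof of Step 3 of Lemma \ref{L2}'', including the correct treatment of the singular term and of the region where $b<0$.
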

\begin{proof} First we will show that $u_{\lambda}$ is a solution for $(P_{\lambda})$. To this end, let $\psi \in X$ and define $\Psi_\epsilon=(u_{\lambda}+\epsilon \psi )^{+}\in X_{+}$ for each $\epsilon>0$ given. Therefore, it follows from Lemma \ref{LL4} that the inequality (\ref{128}) holds true with $\Psi_\epsilon$ in the place of $\psi$.
	
	Now, by adapting the proof of Step 3 of Lemma \ref{L2} with 
	$$||u_{\lambda}||^{2}-\lambda\int_{\mathbb{R}^{N}}a(x)|u_{\lambda}|^{1-\gamma}dx-\int_{\mathbb{R}^{N}}b(x)|u_{\lambda}|^{p+1}dx=0~~(\mbox{because}~u_\lambda \in \mathcal{N}_{\lambda})$$
	in the place of (\ref{118}),
	we are able to show that $u_\lambda$ is a solution for Problem $(P_{\lambda})$. In a similar way,  $w_{\lambda}$ will be a solution for $(P_{\lambda})$ as well.
\end{proof}

\fim
\section{Multiplicity of solutions for $\lambda=\lambda_{\ast}$}

In this section we prove the existence of at least two solutions for Problem $(P_{\lambda_\ast})$ by using the multiplicity result given in Proposition \ref{T1}  for $0<\lambda<\lambda_\ast$ and performing a limit process. The next proposition is a consequence of the monotonicities and regularities of the functions   ${t}_{\lambda}^{+}(u), {t}_{\lambda}^{-}(u)$, ${J}_{\lambda}^{+}$ and $ {J}_{\lambda}^{-}$ given by Lemma \ref{L5}.
\begin{proposition}\label{CC1} There holds:
	\begin{itemize}
		\item[$a)$] the functions $(0,\lambda_{\ast}]\ni \lambda \rightarrow \tilde{J}_{\lambda}^{\pm}$ are decreasing and left-continuous for $\lambda \in (0,\lambda_{\ast})$,
		\item[$b)$] $\displaystyle \lim_{\lambda \uparrow \lambda_{\ast}}\tilde{J}^{\pm}_{\lambda}=\tilde{J}^{\pm}_{\lambda_{\ast}}$.
	\end{itemize}
\end{proposition}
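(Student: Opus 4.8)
The plan is to reduce both minimization problems to the unit sphere and then exploit two facts about the one–dimensional reduced energies: for each fixed direction they are \emph{monotone} in $\lambda$ (Lemma \ref{L5}) and they \emph{converge} as $\lambda\uparrow\lambda_\ast$ (Corollary \ref{C3}). Concretely, using the $0$-homogeneity of $J^{\pm}_\lambda$ and the cone/homeomorphism structure of Proposition \ref{r1}~$(ii)$--$(v)$, I would write $\tilde J^{+}_\lambda=\inf\{J^{+}_\lambda(v):v\in S\cap X_{+}\}$ and $\tilde J^{-}_\lambda=\inf\{J^{-}_\lambda(v):v\in S\cap\hat{\mathcal N}_\lambda\}$, where each admissible value equals $\Phi_\lambda(P^{\pm}(v))$. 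By Proposition \ref{r1}~$(i)$ the domain $\hat{\mathcal N}_\lambda\cap S$ is the \emph{same} set for every $\lambda\in(0,\lambda_\ast)$, and by Lemma \ref{L5} the map $\lambda\mapsto J^{\pm}_\lambda(v)$ is $C^\infty$ and strictly decreasing for each fixed $v$.

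For part $a)$ on $(0,\lambda_\ast)$, monotonicity is then immediate: an infimum of a family that is pointwise decreasing in $\lambda$ is itself non-increasing. To upgrade this to \emph{strictly} decreasing I would evaluate at a minimizer: given $\lambda_1<\lambda_2<\lambda_\ast$, take the minimizer $u_{\lambda_1}\in\mathcal N^{+}_{\lambda_1}$ of Lemma \ref{LL2}, normalize it to $v_1\in S$, and use $\tilde J^{+}_{\lambda_2}\le J^{+}_{\lambda_2}(v_1)<J^{+}_{\lambda_1}(v_1)=\tilde J^{+}_{\lambda_1}$ (and analogously for $-$). Left-continuity at $\lambda_0\in(0,\lambda_\ast)$ follows from a squeeze: monotonicity gives $\liminf_{\lambda\uparrow\lambda_0}\tilde J^{\pm}_\lambda\ge\tilde J^{\pm}_{\lambda_0}$, while $\tilde J^{\pm}_\lambda\le J^{\pm}_\lambda(v)$ for every fixed $v$ combined with the continuity of $\lambda\mapsto J^{\pm}_\lambda(v)$ (Lemma \ref{L5}) and then taking the infimum over $v$ yields $\limsup_{\lambda\uparrow\lambda_0}\tilde J^{\pm}_\lambda\le\tilde J^{\pm}_{\lambda_0}$.

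For part $b)$ the same squeeze is the engine, but now the limiting parameter $\lambda_\ast$ is where the Nehari set degenerates. The pointwise limit is supplied by Corollary \ref{C3}: for each admissible $v$ one has $J^{+}_\lambda(v)\to\Phi_{\lambda_\ast}(s_{\lambda_\ast}(v)v)=:\ell^{+}(v)$ and $J^{-}_\lambda(v)\to\Phi_{\lambda_\ast}(t_{\lambda_\ast}(v)v)=:\ell^{-}(v)$ as $\lambda\uparrow\lambda_\ast$ (for directions with $\int b|v|^{p+1}\le0$ this is just the continuity in Lemma \ref{L5} on an interval containing $\lambda_\ast$). Because $\lambda\mapsto J^{\pm}_\lambda(v)$ is decreasing, $J^{\pm}_\lambda(v)\ge\ell^{\pm}(v)$ for all $\lambda<\lambda_\ast$, so $\tilde J^{\pm}_\lambda\ge\inf_v\ell^{\pm}(v)$; and $\tilde J^{\pm}_\lambda\le J^{\pm}_\lambda(v)\to\ell^{\pm}(v)$ gives, after taking the infimum over $v$, the matching upper bound. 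Via the homeomorphisms $P^{\pm}$ of Proposition \ref{r1}~$(iv)$--$(v)$ this identifies $\lim_{\lambda\uparrow\lambda_\ast}\tilde J^{\pm}_\lambda=\inf_{\mathcal N^{\pm}_{\lambda_\ast}\cup\mathcal N^{0}_{\lambda_\ast}}\Phi_{\lambda_\ast}$.

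The main obstacle is the final reconciliation: the limit naturally produces an infimum over $\mathcal N^{\pm}_{\lambda_\ast}\cup\mathcal N^{0}_{\lambda_\ast}$, whereas $\tilde J^{\pm}_{\lambda_\ast}$ is defined as an infimum over $\mathcal N^{\pm}_{\lambda_\ast}$ only, so I must show the $\mathcal N^{0}_{\lambda_\ast}$ directions do not lower the value. I would argue by approximation: a direction $v$ with $\lambda(v)=\lambda_\ast$ lies in the closure of $\{\lambda(\cdot)>\lambda_\ast\}$ in $S$ because $\mathcal N^{0}_{\lambda_\ast}$ has empty interior (Proposition \ref{r1}~$(vi)$), and the continuity of $s_{\lambda_\ast}$, $t_{\lambda_\ast}$ (Proposition \ref{r1}~$(iv)$--$(v)$) together with that of $\Phi_{\lambda_\ast}$ shows $\Phi_{\lambda_\ast}(P^{\pm}(v))$ is a limit of energies of genuine points of $\mathcal N^{\pm}_{\lambda_\ast}$, hence $\ge\tilde J^{\pm}_{\lambda_\ast}$; therefore $\inf_{\mathcal N^{\pm}_{\lambda_\ast}\cup\mathcal N^{0}_{\lambda_\ast}}\Phi_{\lambda_\ast}=\tilde J^{\pm}_{\lambda_\ast}$, proving $b)$. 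Finally, combining $b)$ with the monotonicity on the open interval extends $a)$ to the closed interval $(0,\lambda_\ast]$, since $\tilde J^{\pm}_{\lambda_\ast}=\lim_{\lambda\uparrow\lambda_\ast}\tilde J^{\pm}_\lambda\le\tilde J^{\pm}_{\lambda'}$ for every $\lambda'<\lambda_\ast$. I emphasize that the whole argument stays on the level of the reduced fibers, avoiding any weak-limit/compactness analysis of the minimizers $u_\lambda,w_\lambda$, which is exactly what the presence of the singular term would make delicate.
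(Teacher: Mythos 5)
Your proposal is correct and is essentially the proof the paper intends: the paper gives no details for Proposition \ref{CC1}, stating only that it is "a consequence of the monotonicities and regularities" in Lemma \ref{L5}, and your argument carries out exactly that plan --- pointwise strict monotonicity of $\lambda\mapsto J^{\pm}_{\lambda}(v)$ plus a squeeze for part $a)$, and Corollary \ref{C3} for the pointwise limits in part $b)$. Your additional step reconciling the $\mathcal{N}^{0}_{\lambda_{\ast}}$ directions via Proposition \ref{r1} $(iv)$--$(vi)$ is exactly the identification $\tilde{J}^{-}_{\lambda_{\ast}}=\inf\left\{\Phi_{\lambda_{\ast}}(t_{\lambda_{\ast}}(w)w):w\in\mathcal{N}^{-}_{\lambda_{\ast}}\cup\mathcal{N}^{0}_{\lambda_{\ast}}\right\}$ that the paper itself invokes in (\ref{140}), so this is the same route, just written out in full.
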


\begin{proposition}\label{T2a} The problem $(P_{\lambda_\ast})$ admits at least two solutions $w_{\lambda_\ast}\in \mathcal{N}_{\lambda_\ast}^{-}$ and $u_{\lambda_\ast}\in \mathcal{N}_{\lambda_\ast}^{+}$.
\end{proposition}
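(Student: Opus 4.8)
The plan is to obtain $u_{\lambda_\ast}$ and $w_{\lambda_\ast}$ as limits of the minimizers produced by Proposition \ref{T1} along a sequence $\lambda_n \uparrow \lambda_\ast$. First I would fix such a sequence and set $u_n := u_{\lambda_n} \in \mathcal{N}^{+}_{\lambda_n}$ and $w_n := w_{\lambda_n} \in \mathcal{N}^{-}_{\lambda_n}$, so that $\Phi_{\lambda_n}(u_n) = \tilde{J}^{+}_{\lambda_n}$ and $\Phi_{\lambda_n}(w_n) = \tilde{J}^{-}_{\lambda_n}$. Using Lemma \ref{LL1} (the bound on $\mathcal{N}^{+}_\lambda$ in \eqref{N2} is uniform for $\lambda \in (0,\lambda_\ast]$ after Sobolev, while the bound on $\mathcal{N}^{-}_\lambda$ follows since the levels $\tilde{J}^{-}_{\lambda_n}$ stay bounded by Proposition \ref{CC1}), both sequences are bounded in $X$; hence, up to a subsequence, $u_n \rightharpoonup u_{\lambda_\ast}$ and $w_n \rightharpoonup w_{\lambda_\ast}$ in $X$, with strong convergence in $L^q(\mathbb{R}^N)$ for $q\in[2,2^\ast)$ and a.e. To see the limits are nontrivial: $w_{\lambda_\ast}\neq 0$ because $\inf\{\|w\|:w\in\mathcal{N}^{-}_\lambda\}$ is bounded below uniformly (from \eqref{N3} and Sobolev) and the compact embedding $X\hookrightarrow L^{p+1}$ then forces $\int_{\mathbb{R}^N} b|w_{\lambda_\ast}|^{p+1}>0$; and $u_{\lambda_\ast}\neq 0$ because $\Phi_{\lambda_\ast}(u_n)=\Phi_{\lambda_n}(u_n)+o(1)\to \tilde{J}^{+}_{\lambda_\ast}<0$ by Proposition \ref{CC1}, which is incompatible with $u_n\rightharpoonup 0$ (as then the lower order terms would vanish and $\liminf\Phi_{\lambda_\ast}(u_n)\geq 0$).

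Next I would pass to the limit in the equations solved by $u_n$ and $w_n$. For $\psi\in X_{+}$ the regular terms converge by weak convergence and the compact embeddings (as in Lemma \ref{lemma2}), while the singular term is treated by Fatou's lemma: since $a u_n^{-\gamma}\psi\geq 0$ converges a.e. to $a u_{\lambda_\ast}^{-\gamma}\psi$ on $\{u_{\lambda_\ast}>0\}$, one gets $\liminf_n \lambda_n\int_{\mathbb{R}^N} a u_n^{-\gamma}\psi \geq \lambda_\ast\int_{\mathbb{R}^N} a u_{\lambda_\ast}^{-\gamma}\psi$. As the remaining terms converge, this limit is finite, which forces $u_{\lambda_\ast}>0$ a.e. and $a u_{\lambda_\ast}^{-\gamma}\psi\in L^1(\mathbb{R}^N)$, and yields the variational inequality
$$\int_{\mathbb{R}^N}\nabla u_{\lambda_\ast}\nabla\psi + V u_{\lambda_\ast}\psi\,dx - \lambda_\ast\int_{\mathbb{R}^N} a u_{\lambda_\ast}^{-\gamma}\psi\,dx - \int_{\mathbb{R}^N} b u_{\lambda_\ast}^{p}\psi\,dx \geq 0,\quad \forall\,\psi\in X_{+},$$
together with the analogous one for $w_{\lambda_\ast}$.

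Now comes the main point. Testing this inequality with $\psi=u_{\lambda_\ast}\in X_{+}$ gives $\phi^{'}_{\lambda_\ast,u_{\lambda_\ast}}(1)\geq 0$. On the other hand, $\|u_n\|^{2}=\lambda_n\int_{\mathbb{R}^N} a u_n^{1-\gamma}+\int_{\mathbb{R}^N} b u_n^{p+1}$ converges (the right-hand side converges by the argument of Lemma \ref{lemma2} and the compact embedding), so weak lower semicontinuity of the norm yields $\|u_{\lambda_\ast}\|^{2}\leq \lim_n\|u_n\|^{2}=\lambda_\ast\int_{\mathbb{R}^N} a u_{\lambda_\ast}^{1-\gamma}+\int_{\mathbb{R}^N} b u_{\lambda_\ast}^{p+1}$, that is $\phi^{'}_{\lambda_\ast,u_{\lambda_\ast}}(1)\leq 0$. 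Hence $\phi^{'}_{\lambda_\ast,u_{\lambda_\ast}}(1)=0$, so $u_{\lambda_\ast}\in\mathcal{N}_{\lambda_\ast}$, and moreover $\|u_{\lambda_\ast}\|^{2}=\lim_n\|u_n\|^{2}$, which upgrades the weak convergence to strong convergence in $X$. The identical reasoning gives $w_{\lambda_\ast}\in\mathcal{N}_{\lambda_\ast}$ and strong convergence $w_n\to w_{\lambda_\ast}$.

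Finally, strong convergence lets me pass $\phi^{''}_{\lambda_n,u_n}(1)>0$ and $\phi^{''}_{\lambda_n,w_n}(1)<0$ to the limit, giving $u_{\lambda_\ast}\in\mathcal{N}^{+}_{\lambda_\ast}\cup\mathcal{N}^{0}_{\lambda_\ast}$ and $w_{\lambda_\ast}\in\mathcal{N}^{-}_{\lambda_\ast}\cup\mathcal{N}^{0}_{\lambda_\ast}$. Using the Nehari identities just obtained together with the variational inequalities, I would repeat the truncation argument of Step 3 of Lemma \ref{L2} (test with $(u_{\lambda_\ast}+\epsilon\psi)^{+}$, split the domain into $\{u_{\lambda_\ast}+\epsilon\psi>0\}$ and its complement, and let $\epsilon\downarrow 0$) to promote the inequalities to the equalities defining weak solutions; thus $u_{\lambda_\ast}$ and $w_{\lambda_\ast}$ solve $(P_{\lambda_\ast})$. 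Since by Corollary \ref{C1} there is no solution in $\mathcal{N}^{0}_{\lambda_\ast}$, the degenerate case is excluded, so $u_{\lambda_\ast}\in\mathcal{N}^{+}_{\lambda_\ast}$ and $w_{\lambda_\ast}\in\mathcal{N}^{-}_{\lambda_\ast}$, and these being disjoint the two solutions are distinct. The main obstacle throughout is the singular term: it obstructs direct differentiation of the energy, so both the limit passage (via Fatou and the positivity it forces) and the final inequality-to-equality upgrade must be handled by hand, and the fact that $\mathcal{N}^{0}_{\lambda_\ast}\neq\emptyset$ makes Corollary \ref{C1} indispensable for landing in the correct Nehari component.
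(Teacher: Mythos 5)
Your proposal is correct, and its skeleton is the paper's: take the minimizers $u_{\lambda_n}\in\mathcal{N}^{+}_{\lambda_n}$, $w_{\lambda_n}\in\mathcal{N}^{-}_{\lambda_n}$ from Proposition \ref{T1} along $\lambda_n\uparrow\lambda_\ast$, prove boundedness via Lemma \ref{LL1} and Proposition \ref{CC1}, pass to the limit in the equations using Fatou's lemma for the singular term (which forces positivity of the limits and yields the variational inequalities), upgrade weak to strong convergence, run the truncation argument of Step 3 of Lemma \ref{L2} to turn the inequalities into equalities, and invoke Corollary \ref{C1} to exclude $\mathcal{N}^{0}_{\lambda_\ast}$. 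The one place where you genuinely depart from the paper is the mechanism for strong convergence and Nehari membership. The paper tests the equations solved by $w_{\lambda_n}$ against $w_{\lambda_n}-w_{\lambda_\ast}$ and runs a $\limsup$ computation, applying Fatou to the term $\int a\,w_{\lambda_n}^{-\gamma}w_{\lambda_\ast}$, to get $\limsup\|w_{\lambda_n}-w_{\lambda_\ast}\|^{2}\leq 0$ first, and only then reads off $\phi^{'}_{\lambda_\ast,w_{\lambda_\ast}}(1)=0$ and $\phi^{''}_{\lambda_\ast,w_{\lambda_\ast}}(1)\leq 0$. You instead obtain $\phi^{'}_{\lambda_\ast,u_{\lambda_\ast}}(1)\geq 0$ by testing the limiting variational inequality with $\psi=u_{\lambda_\ast}$ itself, and $\phi^{'}_{\lambda_\ast,u_{\lambda_\ast}}(1)\leq 0$ from weak lower semicontinuity together with the Nehari identity $\|u_n\|^{2}=\lambda_n\int a|u_n|^{1-\gamma}+\int b|u_n|^{p+1}$, whose right-hand side converges by compactness; the resulting equality simultaneously puts $u_{\lambda_\ast}$ in $\mathcal{N}_{\lambda_\ast}$ and gives $\|u_n\|\to\|u_{\lambda_\ast}\|$, hence strong convergence. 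This variant is slightly more economical, treats the two families symmetrically, and is correct. The only thing it omits, which the paper's route delivers en passant in \eqref{140}, is that the limits are global minimizers of $\Phi_{\lambda_\ast}$ over $\mathcal{N}^{\mp}_{\lambda_\ast}\cup\mathcal{N}^{0}_{\lambda_\ast}$; that property is not needed for the present statement, but it is what makes $S^{\pm}_{\lambda_\ast}$ nonempty in Corollary \ref{139} and is used in Section 5, so if you want your proof to substitute for the paper's you should add the one-line identification $\Phi_{\lambda_\ast}(u_{\lambda_\ast})=\lim\Phi_{\lambda_n}(u_{\lambda_n})=\lim\tilde{J}^{+}_{\lambda_n}=\tilde{J}^{+}_{\lambda_\ast}$ (and its analogue for $w_{\lambda_\ast}$), which your strong convergence plus Proposition \ref{CC1} immediately gives.
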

\begin{proof}
	First, let us show that there exists a solution $w_{\lambda_{\ast}}\in \mathcal{N}^{-}_{\lambda_{\ast}}$ for  $(P_{\lambda_{\ast}})$. Let $\left\{\lambda_{n}\right\}\subset (0,\lambda_{\ast})$ be such that $\lambda_{n}\uparrow \lambda_{\ast}$ and $\left\{w_{\lambda_{n}}\right\}\subset \mathcal{N}^{-}_{\lambda_{n}}$ as in Proposition \ref{T1}. Suppose on the contrary that $||w_{\lambda_{n}}||\rightarrow \infty $, hence after applying the H\"older inequality, Sobolev embedding and the fact that $w_{\lambda_{n}}\in \mathcal{N}^{-}_{\lambda_{n}}$, we obtain
	\begin{align*}
		J^{-}_{\lambda_{n}}=\Phi_{\lambda_n}(w_{\lambda_n})
		=\left(\frac{1}{2}-\frac{1}{p+1}\right)||w_{\lambda_{n}}||^{2}+\lambda_{n}\left(\frac{1}{p+1}-\frac{1}{1-\gamma}\right)\int_{\mathbb{R}^{N}}a(x)|w_{\lambda_{n}}|^{1-\gamma}dx~~ &\\
		\geq\left(\frac{1}{2}-\frac{1}{p+1}\right)||w_{\lambda_{n}}||^{2}+C\left(\frac{1}{p+1}-\frac{1}{1-\gamma}\right)||w_{\lambda_{n}}||^{1-\gamma},~~~~~~~~~~~~~~~~
	\end{align*}
	which implies by Proposition \ref{CC1} that $\infty >\lim \tilde{J}^{-}_{\lambda_{n}}\geq \infty$, which is a contradiction. Therefore $\left\{w_{\lambda_{n}}\right\}$ is bounded and we can assume that $w_{\lambda_{n}}\rightharpoonup w_{\lambda_{\ast}}$ in $X$,
	\begin{align*}
		\begin{array}{c}
			w_{\lambda_{n}}\rightarrow w_{\lambda_{\ast}}~\mbox{in}~L^{q}(\mathbb{R}^{N}),\forall ~q\in [2,2^{\ast}),~~~~~~~~~~~~~~~~~~~~\\
			w_{\lambda_{n}}\rightarrow w_{\lambda_{\ast}}~\mbox{a.e.}~\mathbb{R}^{N},~~~~~~~~~~~~~~~~~~~~~~~~~~~~~~~~~~~~~~~~\\
			\mbox{there exist}~ h_{q}\in L^{q}(\mathbb{R}^{N})~\mbox{such that}~|w_{\lambda_{n}}|\leqslant h_{q}~~~~~~
		\end{array}
	\end{align*}
	with $w_{\lambda_{\ast}}\geqslant 0$. 
	
	Thus, once  $w_{\lambda_{n}}$ is a solution for Problem $(P_{\lambda_n})$ it follows that
	\begin{equation}\label{130}
		(w_{\lambda_{\ast}},\psi )-\int_{\mathbb{R}^{N}}b(x)w_{\lambda_{\ast}}^{p}\psi dx\geq \lambda_{\ast}\int_{\mathbb{R}^{N}}a(x)G(x)\psi dx
	\end{equation}
	for all $\psi \in X_+$, where $G$ is understood as $G(x):=w_{\lambda_{\ast}}^{-\gamma}(x)$ if $w_{\lambda_{\ast}}(x)\neq 0$ and $G(x):=\infty$ if $w_{\lambda_{\ast}}(x)=0$. It follows that $0\leq \int_{\mathbb{R}^{N}}a(x)G(x)\psi dx< \infty$, which implies  $w_{\lambda_{\ast}}(x)>0$ in $\mathbb{R}^{N}$ and
	\begin{equation}\label{2000}
		(w_{\lambda_{\ast}},\psi )-\int_{\mathbb{R}^{N}}b(x)w_{\lambda_{\ast}}^{p}\psi dx\geq \lambda_{\ast}\int_{\mathbb{R}^{N}}a(x)w_{\lambda_{\ast}}^{-\gamma}\psi dx, \forall \psi \in X_{+}.
	\end{equation}
	
	Moreover, it follows from Lemma \ref{lemma2} and Fatou's lemma that
	\begin{align*}
		\limsup_{n \to \infty}(w_{\lambda_{n}},w_{\lambda_{n}}-w_{\lambda_{\ast}})
		\leq  \limsup_{n \to \infty} \lambda_{n}\int_{\mathbb{R}^{N}}a(x)w_{\lambda_{n}}^{1-\gamma}+\limsup_{n \to \infty} \left(-\lambda_{n}\int_{\mathbb{R}^{N}}a(x)w_{\lambda_{n}}^{-\gamma}w_{\lambda_{\ast}}\right)~~~~~~~~~~~~~~~~~~~~~ \\
		=\lambda_{\ast}\int_{\mathbb{R}^{N}}a(x)w_{\lambda_{\ast}}^{1-\gamma}-\liminf\int_{\mathbb{R}^{N}}\lambda_{n} a(x)w_{\lambda_{n}}^{-\gamma}w_{\lambda_{\ast}}~~~~~~~~~~~~~~~~~~~~~~~~~~~~~~~~~~~~~~~\\
		\leq  \lambda_{\ast}\int_{\mathbb{R}^{N}}a(x)w_{\lambda_{\ast}}^{1-\gamma}-\int_{\mathbb{R}^{N}}\liminf \lambda_{n} a(x)w_{\lambda_{n}}^{-\gamma}w_{\lambda_{\ast}}~~~~~~~~~~~~~~~~~~~~~~~~~~~~~~~~~~~~~~~\\
		=\lambda_{\ast}\int_{\mathbb{R}^{N}}a(x)w_{\lambda_{\ast}}^{1-\gamma}-\lambda_{\ast}\int_{\mathbb{R}^{N}}a(x)w_{\lambda_{\ast}}^{1-\gamma}=0~~~~~~~~~~~~~~~~~~~~~~~~~~~~~~~~~~~~~~~~~~~~~~
	\end{align*}
 that is,
	$$\limsup ||w_{\lambda_{n}}-w_{\lambda_{\ast}}||^{2}\leq \limsup (w_{\lambda_{n}},w_{\lambda_{n}}-w_{\lambda_{\ast}})+\limsup -(w_{\lambda_{\ast}},w_{\lambda_{n}}-w_{\lambda_{\ast}})\leq 0,$$
	which  implies that  $w_{\lambda_{n}}\to w_{\lambda_{\ast}}$ in $X$.
	
	As a consequence of this, we have that 
	$$\phi^{'}_{\lambda_{\ast},w_{\lambda_{\ast}}}(1)=\lim \phi^{'}_{\lambda_{n},w_{\lambda_{n}}}(1)=0~\mbox{and}~\phi^{''}_{\lambda_{\ast},w_{\lambda_{\ast}}}(1)=\lim \phi^{''}_{\lambda_{n},w_{\lambda_{n}}}(1)\leq 0$$
	which implies, by the first equality, that  $w_{\lambda_{\ast}}\in \mathcal{N}_{\lambda_{\ast}}$. We also have from  Lemma \ref{LL1} $b)$, that
	$$0<(1+\gamma)||w_{\lambda_{\ast}}||=(1+\gamma)\displaystyle\lim_{n\to \infty}||w_{\lambda_{n}}||\leq (\gamma+p)\displaystyle\lim_{n\to \infty} \int_{\mathbb{R}^{N}}b(x)w_{\lambda_{n}}^{p+1}dx= (\gamma+p)\int_{\mathbb{R}^{N}}b(x)w_{\lambda_{\ast}}^{p+1}dx,$$
	that is, $\int_{\mathbb{R}^{N}}b(x)w_{\lambda_{\ast}}^{p+1}dx>0$ and hence $w_{\lambda_{\ast}} \in \mathcal{N}^{-}_{\lambda_{\ast}}\cup \mathcal{N}^{0}_{\lambda_{\ast}}$.
	
	By using that $w_{\lambda_{\ast}}\in \mathcal{N}_{\lambda_{\ast}}$, that is, 
	$$|| w_{\lambda_{\ast}}||^{2}-\lambda_{\ast}\int_{\mathbb{R}^{N}}a(x)|w_{\lambda_{\ast}}|^{1-\gamma}dx-\int_{\mathbb{R}^{N}}b(x)|w_{\lambda_{\ast}}|^{p+1}dx=0$$
	holds,
	taking $\Psi_{\epsilon}=(w_{\lambda_{\ast}}+\epsilon \psi )^{+}\in X_{+}$, for $\psi \in X, \epsilon>0$ given, as a test function in (\ref{2000}) and following similar arguments as done in the proof of the Proposition \ref{T1}, we are able to conclude that 
	$w_{\lambda_{\ast}}$ is a solution of $(P_{\lambda_{\ast}})$. Moreover, 
	$w_{\lambda_{\ast}} \in \mathcal{N}_{\lambda_{\ast}}^{-}$ due to Corollary \ref{C1}. Finally, it follows from the strong convergence, Proposition \ref{T1}, Proposition \ref{CC1} and Proposition \ref{r1} $(iv),(v),(vi)$ that 
	\begin{equation}
		\label{140}
		\Phi_{\lambda_{\ast}}(w_{\lambda_{\ast}})= \lim \Phi_{\lambda_{n}}(w_{n})=\lim \tilde{J}^{-}_{\lambda_{n}}= \tilde{J}^{-}_{\lambda_{\ast}}=\inf \left\{{\Phi}_{\lambda_{\ast}}(t_{\lambda_{\ast}}(w)w):w\in \mathcal{N}^{-}_{\lambda_{\ast}}\cup \mathcal{N}^{0}_{\lambda_{\ast}}\right\}
	\end{equation}
	holds, that is, $w_{\lambda_{\ast}} \in \mathcal{N}_{\lambda_{\ast}}^{-}$ is a global minimum of  $\Phi_{\lambda_{\ast}}$ constrained to $\mathcal{N}^{-}_{\lambda_{\ast}}\cup \mathcal{N}^{0}_{\lambda_{\ast}}$.
	\smallskip
	
	In order to show the existence of a second solution for Problem $(P_{\lambda_{\ast}})$, we proceed in a similar way, that is, pick a $\left\{\lambda_{n}\right\}\subset (0,\lambda_{\ast})$ such that $\lambda_{n}\uparrow \lambda_{\ast}$ and $\left\{u_{\lambda_{n}}\right\}\subset \mathcal{N}^{+}_{\lambda_{n}}$ as given by Proposition \ref{T1}. After some manipulations, we obtain that 
	$u_{\lambda_{n}}\to u_{\lambda_{\ast}}$ in $X$ for some  $0<u_{\lambda_{\ast}} \in  \mathcal{N}^{-}_{\lambda_{\ast}}\cup \mathcal{N}^{0}_{\lambda_{\ast}}$, which is a solution for Problem $(P_{\lambda_{{\ast}}})$. 
	
	Besides this, if  $\int_{\mathbb{R}^{N}}b(x)u_{\lambda_{\ast}}^{p+1}dx> 0$ and $\phi^{''}_{\lambda_{\ast},u_{\lambda_{\ast}}}(1)= 0$, then $u_{\lambda_{\ast}}$ would be a solution for the problem $(P_{\lambda_{{\ast}}})$ in $\mathcal{N}^{0}_{\lambda_{\ast}}$, but this is impossible by Corollary \ref{C1}. So we have $\phi^{''}_{\lambda_{\ast},u_{\lambda_{\ast}}}(1)> 0$ in this case. On the other side, if  $\int_{\mathbb{R}^{N}}b(x)u_{\lambda_{\ast}}^{p+1}dx\leq 0$, then we have
	$$\phi^{''}_{\lambda_{\ast},u_{\lambda_{\ast}}}(1)=||u_{\lambda_{\ast}}||^{2}+\gamma \lambda_{\ast}\int_{\mathbb{R}^{N}}a(x)u_{\lambda_{\ast}}^{1-\gamma}dx-p\int_{\mathbb{R}^{N}}b(x)u_{\lambda_{\ast}}^{p+1}dx>0.$$
	So, in both cases, we have $\phi^{''}_{\lambda_{\ast},u_{\lambda_{\ast}}}(1)> 0$ which implies that $u_{\lambda_{\ast}}\in \mathcal{N}^{+}_{\lambda_{\ast}}$. We also have that  $u_{\lambda_{\ast}} \in \mathcal{N}_{\lambda_{\ast}}^{-}$ is a global minimum of  $\Phi_{\lambda_{\ast}}$ constrained to $\mathcal{N}^{+}_{\lambda_{\ast}}\cup \mathcal{N}^{0}_{\lambda_{\ast}}$ as well. This ends the proof.
\end{proof}

\fim
\vspace{0.4cm}

Before proving the multiplicity of solutions for Problem $(P_{\lambda})$ when $\lambda > \lambda_\ast$, let us gather further information on the sets
\begin{equation}\label{2001}
	S^{-}_{\lambda_\ast}=\left\{w\in \mathcal{N}_{\lambda_\ast}^{-}:J^{-}_{\lambda_\ast}(w)=\tilde{J}_{\lambda_\ast}^{-}\right\}~~\mbox{and}~~S^{+}_{\lambda_\ast}=\left\{u\in \mathcal{N}_{\lambda_\ast}^{+}:J^{+}_{\lambda_\ast}(u)=\tilde{J}_{\lambda_\ast}^{+}\right\}.
\end{equation}
\begin{corollary}
	\label{139}
	We have  that: 
	\begin{enumerate}
		\item [$a)$] $S^{-}_{\lambda_\ast}$ and $S^{+}_{\lambda_\ast}$ are non-empties, 
		\item [$b)$]  there exist $c_{\lambda_\ast}, C_{\lambda_\ast}>0$ such that $c_{\lambda_\ast} \leq \Vert u \Vert,  \Vert w \Vert \leq C_{\lambda_\ast}$  for all $u \in S^{+}_{\lambda_\ast}$ and $w \in S^{-}_{\lambda_\ast}$, 
		\item [$c)$] if $u\in 	S^{-}_{\lambda_\ast}\cup 	S^{+}_{\lambda_\ast}$, then $u$ is a solution for Problem $(P_{\lambda_{\ast}})$.
	\end{enumerate}
\end{corollary}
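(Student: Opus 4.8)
\begin{proof}
The plan is to deduce $a)$ and $b)$ from the material of Sections 2 and 3, and to establish $c)$ by re-running, at the level $\lambda=\lambda_\ast$, the variational scheme developed in Lemmas \ref{LL3}, \ref{LL4} and Proposition \ref{T1}. For $a)$ I would simply invoke Proposition \ref{T2a}, which yields $w_{\lambda_\ast}\in\mathcal{N}^-_{\lambda_\ast}$ and $u_{\lambda_\ast}\in\mathcal{N}^+_{\lambda_\ast}$ with $\Phi_{\lambda_\ast}(w_{\lambda_\ast})=\tilde{J}^-_{\lambda_\ast}$ and $\Phi_{\lambda_\ast}(u_{\lambda_\ast})=\tilde{J}^+_{\lambda_\ast}$. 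Since $t^{\pm}_{\lambda_\ast}(v)=1$ for $v\in\mathcal{N}^{\pm}_{\lambda_\ast}$, these read $J^{-}_{\lambda_\ast}(w_{\lambda_\ast})=\tilde{J}^-_{\lambda_\ast}$ and $J^{+}_{\lambda_\ast}(u_{\lambda_\ast})=\tilde{J}^+_{\lambda_\ast}$, so $w_{\lambda_\ast}\in S^-_{\lambda_\ast}$ and $u_{\lambda_\ast}\in S^+_{\lambda_\ast}$, and both sets are non-empty.

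For $b)$ I would read the bounds off Lemma \ref{LL1}, which is stated for every $\lambda>0$ and hence applies at $\lambda_\ast$. The upper bound on $S^+_{\lambda_\ast}$ is immediate from $\sup\{\|u\|:u\in\mathcal{N}^+_{\lambda_\ast}\}<\infty$; the upper bound on $S^-_{\lambda_\ast}$ follows from $\sup\{\|w\|:w\in\mathcal{N}^-_{\lambda_\ast},\ \Phi_{\lambda_\ast}(w)\le M\}<\infty$ with the fixed choice $M=\tilde{J}^-_{\lambda_\ast}$; and the lower bound on $S^-_{\lambda_\ast}$ is exactly $\inf\{\|w\|:w\in\mathcal{N}^-_{\lambda_\ast}\}>0$. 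The only extra step is the lower bound on $S^+_{\lambda_\ast}$: by H\"older and the Sobolev embeddings one has $\Phi_{\lambda_\ast}(u)\ge -C(\|u\|^{1-\gamma}+\|u\|^{p+1})$, so the identity $\Phi_{\lambda_\ast}(u)=\tilde{J}^+_{\lambda_\ast}<0$ (recall \eqref{123}) is incompatible with $\|u\|\to 0$ and thus forces $\|u\|\ge c_{\lambda_\ast}>0$.

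The heart of the matter is $c)$. Fix $u\in S^+_{\lambda_\ast}$ (the argument for $w\in S^-_{\lambda_\ast}$ being symmetric). By definition $u\in\mathcal{N}^+_{\lambda_\ast}$, so $\phi''_{\lambda_\ast,u}(1)>0$ strictly and $u$ minimizes $\Phi_{\lambda_\ast}$ over $\mathcal{N}^+_{\lambda_\ast}$; these two facts are all that the Section 3 arguments use. I would therefore repeat, with $u$ in the role of $u_\lambda$, three steps. First, as in Lemma \ref{LL3} $a)$, strict positivity of $\phi''_{\lambda_\ast,u}(1)$ and continuity give $\epsilon_0>0$ with $\phi''_{\lambda_\ast,u+\epsilon\psi}(1)>0$ for $0\le\epsilon\le\epsilon_0$ and every $\psi\in X_+$, whence $t^+_{\lambda_\ast}(u+\epsilon\psi)(u+\epsilon\psi)\in\mathcal{N}^+_{\lambda_\ast}$ and $\Phi_{\lambda_\ast}(u+\epsilon\psi)\ge\Phi_{\lambda_\ast}(u)$ by minimality. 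Second, as in Lemma \ref{LL4}, dividing the resulting inequality by $\epsilon$ and passing to the limit through Fatou's lemma yields $au^{-\gamma}\psi\in L^1(\mathbb{R}^N)$, $u>0$ a.e., and the one-sided inequality
$$\int_{\mathbb{R}^{N}}\nabla u\nabla\psi+V(x)u\psi\,dx-\lambda_\ast\int_{\mathbb{R}^{N}}a(x)u^{-\gamma}\psi\,dx-\int_{\mathbb{R}^{N}}b(x)u^{p}\psi\,dx\ge 0,~\forall\,\psi\in X_+.$$
Third, testing with $\Psi_\epsilon=(u+\epsilon\psi)^+$ and running the splitting argument of Step 3 of Lemma \ref{L2} (using now the membership $u\in\mathcal{N}_{\lambda_\ast}$, i.e. $\|u\|^2-\lambda_\ast\int a|u|^{1-\gamma}-\int b|u|^{p+1}=0$) promotes this to an equality for all $\psi\in X$, which is precisely the weak formulation of $(P_{\lambda_\ast})$.

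The point to watch---and the only real obstacle---is that $\mathcal{N}^0_{\lambda_\ast}\ne\emptyset$, so the Nehari splitting degenerates precisely at $\lambda_\ast$. What rescues the scheme is that membership in $\mathcal{N}^+_{\lambda_\ast}$ (resp. $\mathcal{N}^-_{\lambda_\ast}$) holds by a strict sign of $\phi''(1)$; hence for small $\epsilon$ the perturbed functions $u+\epsilon\psi$ remain in the same component (resp. $w+\epsilon\psi$ still admits $t^-_{\lambda_\ast}(w+\epsilon\psi)$ with $t^-_{\lambda_\ast}(w+\epsilon\psi)\to 1$, as in Lemma \ref{LL3} $b)$), keeping the projection onto the Nehari set and the associated fiber estimates well defined and uniform. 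Crucially, no limiting procedure along $\lambda_n\uparrow\lambda_\ast$ enters here, so the conclusion holds for every minimizer in $S^{\pm}_{\lambda_\ast}$, not merely for the ones produced in Proposition \ref{T2a}.
\end{proof}
\fim
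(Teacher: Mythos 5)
Your items $a)$ and $b)$ are correct and essentially coincide with the paper's (one-line) proof: $a)$ is exactly what (\ref{140}) and its $\mathcal{N}^{+}$-analogue at the end of the proof of Proposition \ref{T2a} deliver, and $b)$ is read off Lemma \ref{LL1}. Your extra step for the lower bound on $S^{+}_{\lambda_\ast}$ --- combining $\Phi_{\lambda_\ast}(u)=\tilde{J}^{+}_{\lambda_\ast}<0$ from (\ref{123}) with $\Phi_{\lambda_\ast}(u)\geq -C\big(\Vert u\Vert^{1-\gamma}+\Vert u\Vert^{p+1}\big)$ --- is a detail the paper leaves implicit, and it is genuinely needed, since Lemma \ref{LL1} gives no norm lower bound over all of $\mathcal{N}^{+}_{\lambda_\ast}$. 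Your plan for $c)$ (one-sided inequality from minimality via perturbation and Fatou, then equality via the splitting argument of Step 3 of Lemma \ref{L2}) is also the natural reading of the paper's ``similar to Proposition \ref{T2a}'', and your treatment of the $\mathcal{N}^{-}$ half is sound precisely because there you invoke the implicit function theorem as in Lemma \ref{LL3} $b)$.

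The gap is in the $\mathcal{N}^{+}$ half of $c)$, at the step ``$\phi^{''}_{\lambda_\ast,u+\epsilon\psi}(1)>0$, whence $t^{+}_{\lambda_\ast}(u+\epsilon\psi)(u+\epsilon\psi)\in\mathcal{N}^{+}_{\lambda_\ast}$'', and again in your closing claim that a strict sign of $\phi^{''}(1)$ keeps the perturbed rays ``in the same component''. At $\lambda=\lambda_\ast$ this implication is false: since $\mathcal{N}^{0}_{\lambda_\ast}\neq\emptyset$, there exist $v\in Z^{+}$ with $\lambda(v)=\lambda_\ast$, whose fiber maps are of type $(II)$ in Proposition \ref{prop21} (decreasing, a single inflection point, no local minimum). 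Concretely, take $z\in\mathcal{N}^{0}_{\lambda_\ast}$ and $v=sz$ with $0<s<1$: then $\phi^{''}_{\lambda_\ast,v}(1)=s^{2}\phi^{''}_{\lambda_\ast,z}(s)>0$ because $\phi^{''}_{\lambda_\ast,z}$ is strictly decreasing and vanishes at $t=1$, yet $t^{+}_{\lambda_\ast}(v)$ does not exist. So positivity of $\phi^{''}$ at $t=1$ does survive perturbation, but that is not what you need: nothing prevents the ray through $u+\epsilon\psi$ from hitting the degenerate cone $\{\lambda(\cdot)=\lambda_\ast\}$, in which case the natural projection lands on $\mathcal{N}^{0}_{\lambda_\ast}$, where minimality over $\mathcal{N}^{+}_{\lambda_\ast}$ says nothing. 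Two repairs, both available in the paper: $(i)$ argue exactly as in Lemma \ref{LL3} $b)$, applying the implicit function theorem to $F(t,e,f,g)=et-\lambda_\ast ft^{-\gamma}-gt^{p}$ at the point $\big(1,\Vert u\Vert^{2},\int_{\mathbb{R}^N} a|u|^{1-\gamma},\int_{\mathbb{R}^N} b|u|^{p+1}\big)$, where $\partial F/\partial t=\phi^{''}_{\lambda_\ast,u}(1)>0$; for small $\epsilon$ this yields a critical point of $\phi_{\lambda_\ast,u+\epsilon\psi}$ near $t=1$ with strictly positive second derivative, which excludes type $(II)$ and identifies it as $t^{+}_{\lambda_\ast}(u+\epsilon\psi)$, so the projection does land in $\mathcal{N}^{+}_{\lambda_\ast}$; or $(ii)$ invoke the fact, proved at the end of Proposition \ref{T2a}, that $\tilde{J}^{+}_{\lambda_\ast}$ is also the infimum of $\Phi_{\lambda_\ast}$ over $\mathcal{N}^{+}_{\lambda_\ast}\cup\mathcal{N}^{0}_{\lambda_\ast}$, so that even a degenerate projection $t^{0}_{\lambda_\ast}(u+\epsilon\psi)(u+\epsilon\psi)\in\mathcal{N}^{0}_{\lambda_\ast}$ has energy at least $\tilde{J}^{+}_{\lambda_\ast}=\Phi_{\lambda_\ast}(u)$. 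With either repair your argument closes; as written, the key inequality $\Phi_{\lambda_\ast}(u+\epsilon\psi)\geq\Phi_{\lambda_\ast}(u)$ is not justified.
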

\begin{proof}
	The item $a)$ follows immediately from  $(\ref{140})$, while $b)$
	is a consequence of Lemma $\ref{LL1}$. Finally, the proof of the item $c)$ is similar to that of Proposition \ref{T2a}.
\end{proof}

\fim

\section{Multiplicity of solutions for $\lambda>\lambda_{\ast}$}

In this section we show the existence of solutions for problem $(P_{\lambda})$ when $\lambda$ is greater than $\lambda_{\ast}$ but close to it. The idea is to minimize the energy functional $\Phi_{\lambda}$ over subsets of $\mathcal{N}_{\lambda}^{+}$ and $\mathcal{N}_{\lambda}^{-}$, which are projections of subsets of $\mathcal{N}_{\lambda_\ast}^{+}$ and $\mathcal{N}_{\lambda_\ast}^{-}$ that have positive distances to $\mathcal{N}_{\lambda_\ast}^{0}$. To do this, we do a finer analysis on these sets and we obtain new estimates that are new even in the non-singular case as in \cite{SM}.

\begin{proposition}\label{PPP1} Let $c<C$. Assume that $\lambda_{n}\downarrow \lambda_{\ast}$. 
	\begin{itemize}
		\item[a)] suppose that $w_{n}\in \mathcal{N}^{-}_{\lambda_{\ast}}$ satisfies $c\leq ||w_{n}||\leq C$. If $(t^{-}_{\lambda_{n}}(w_{n}))^{2}\phi^{''}_{\lambda_{n},w_n}(t^{-}_{\lambda_{n}}(w_{n}))\rightarrow 0$, 
		then $d(w_{n},\mathcal{N}^{0}_{\lambda_{\ast}})\rightarrow 0$ as $n\rightarrow \infty$,
		\item[b)] suppose that $u_{n}\in \mathcal{N}^{+}_{\lambda_{\ast}}$ satisfies $c\leq ||u_{n}||\leq C$. If $(t^{+}_{\lambda_{n}}(u_{n}))^{2}\phi^{''}_{\lambda_{n},u_n}(t^{+}_{\lambda_{n}}(u_{n}))\rightarrow 0$, then $d(u_{n},\mathcal{N}^{0}_{\lambda_{\ast}})\rightarrow 0$ as $n\rightarrow \infty$.
	\end{itemize}
	
\end{proposition}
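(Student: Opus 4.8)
The plan is to argue by subsequences: it suffices to show that every subsequence of $\{w_n\}$ admits a further subsequence converging strongly in $X$ to some element of $\mathcal{N}^0_{\lambda_*}$, for then $d(w_n,\mathcal{N}^0_{\lambda_*})\to 0$. I treat part $a)$; part $b)$ is entirely analogous. The first move is to rescale: put $t_n:=t^-_{\lambda_n}(w_n)$ and $v_n:=t_nw_n\in\mathcal{N}^-_{\lambda_n}$. Since $\phi_{\lambda_n,v_n}(s)=\phi_{\lambda_n,w_n}(st_n)$, we have $\phi''_{\lambda_n,v_n}(1)=t_n^2\phi''_{\lambda_n,w_n}(t_n)\to 0$, so the hypothesis says precisely that $v_n$ becomes asymptotically degenerate. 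I would then record a priori bounds on $t_n$: because $w_n\in\mathcal{N}^-_{\lambda_*}$, the monotonicity of $\lambda\mapsto t^-_\lambda(w_n)$ from Lemma \ref{L5} and $\lambda_n>\lambda_*$ give $0<t_n<t^-_{\lambda_*}(w_n)=1$, while applying \eqref{N3} to $v_n\in\mathcal{N}^-_{\lambda_n}$, together with $c\le\|w_n\|\le C$ and $\int b|w_n|^{p+1}\le\|b\|_\infty\|w_n\|_{p+1}^{p+1}$, yields $t_n^{p-1}\ge c'>0$. Hence $t_n$ lies in a compact subset of $(0,\infty)$ and $\|v_n\|$ is bounded away from $0$ and $\infty$.

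Along a subsequence I may then assume $t_n\to t_0>0$, $w_n\rightharpoonup w$ and $v_n\rightharpoonup v=t_0w$ in $X$, with $v_n\to v$ in $L^q(\mathbb{R}^N)$ for $q\in[2,2^{\ast})$ and a.e. Writing $P_n=\int a|v_n|^{1-\gamma}$ and $Q_n=\int b|v_n|^{p+1}$, the identity $v_n\in\mathcal{N}_{\lambda_n}$ gives $\|v_n\|^2=\lambda_nP_n+Q_n$, whence
\[
\phi''_{\lambda_n,v_n}(1)=(1+\gamma)\lambda_nP_n-(p-1)Q_n\longrightarrow 0 .
\]
By the convergence argument of Lemma \ref{lemma2}, $P_n\to P:=\int a|v|^{1-\gamma}$ and $Q_n\to Q:=\int b|v|^{p+1}$, so in the limit $(1+\gamma)\lambda_*P=(p-1)Q$ and the limit $\ell:=\lim\|v_n\|^2=\lambda_*P+Q=\tfrac{p+\gamma}{1+\gamma}Q$ exists. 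Since \eqref{N3} gives $\int b|w_n|^{p+1}\ge\tfrac{1+\gamma}{\gamma+p}\|w_n\|^2\ge\tfrac{1+\gamma}{\gamma+p}c^2$, passing to the limit yields $Q>0$; hence $v\in Z^+$ and $P>0$.

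Upgrading to strong convergence is the heart of the matter. The triple $(\ell,P,Q)$ satisfies exactly the defining relations of $\mathcal{N}^0_{\lambda_*}$, so inserting it into \eqref{E2} returns $C(\gamma,p)\,\ell^{\frac{p+\gamma}{p-1}}Q^{-\frac{1+\gamma}{p-1}}P^{-1}=\lambda_*$. On the other hand $\lambda(v)=C(\gamma,p)(\|v\|^2)^{\frac{p+\gamma}{p-1}}Q^{-\frac{1+\gamma}{p-1}}P^{-1}$ with $\|v\|^2\le\ell$ by weak lower semicontinuity of the norm, so $\lambda(v)\le\lambda_*$; but $v\in Z^+$ forces $\lambda(v)\ge\lambda_*$, so $\lambda(v)=\lambda_*$ and, by strict monotonicity in $\|v\|^2$, $\|v\|^2=\ell=\lim\|v_n\|^2$. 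In the Hilbert space $X$ this gives $v_n\to v$ strongly, hence $w_n=v_n/t_n\to w=v/t_0$ strongly. Finally, strong convergence gives $\phi'_{\lambda_*,w}(1)=\lim\phi'_{\lambda_*,w_n}(1)=0$, i.e.\ $w\in\mathcal{N}_{\lambda_*}$, while $\lambda(w)=\lambda(v)=\lambda_*$ by $0$-homogeneity; Proposition \ref{lambdava} then places $\phi_{\lambda_*,w}$ in case $(II)$ of Proposition \ref{prop21}, whose unique critical point is an inflection, forcing $\phi''_{\lambda_*,w}(1)=0$ and thus $w\in\mathcal{N}^0_{\lambda_*}$ by Lemma \ref{L2}.

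For part $b)$ I set $s_n=t^+_{\lambda_n}(u_n)$ and $\tilde v_n=s_nu_n\in\mathcal{N}^+_{\lambda_n}$, use $s_n>1$ (from the monotonicity of $\lambda\mapsto t^+_\lambda$) for a lower bound and \eqref{N2} for an upper bound on $s_n$, and repeat verbatim; the only extra point is that $\tilde v\neq 0$ (so $\tilde v\in Z^+$) follows from $\|\tilde v_n\|^2=\lambda_n\tilde P_n+\tilde Q_n\ge c^2$, which would be violated if the limits of $\tilde P_n,\tilde Q_n$ both vanished. The main obstacle is precisely this strong-convergence step, and the idea that unlocks it is that the asymptotic degeneracy $\phi''_{\lambda_n,v_n}(1)\to 0$, combined with the minimality of $\lambda_*$ over $Z^+$, pins the weak limit onto $\mathcal{N}^0_{\lambda_*}$, so that the infimum defining $\lambda_*$ is attained by $v$.
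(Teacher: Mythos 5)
Your proof is correct, and its technical core is genuinely different from --- and leaner than --- the paper's. Both arguments rescale to $v_n=t^{-}_{\lambda_n}(w_n)w_n\in\mathcal{N}^{-}_{\lambda_n}$ and both ultimately rest on the same compactness mechanism (extremality of $\lambda_{\ast}$ over $Z^{+}$, weak lower semicontinuity of the norm, and the compact embeddings), but the middle steps diverge. The paper brings in the second projection $s_n=t^{+}_{\lambda_n}(w_n)$, solves the linear system formed by the two Nehari identities for the unknowns $\int a|w_n|^{1-\gamma}$ and $\int b|w_n|^{p+1}$, substitutes into the degeneracy hypothesis to obtain \eqref{131}, and uses that $t=1$ is the unique zero of $g(t)=(1+\gamma)t^{p+\gamma}+(p-1)-(\gamma+p)t^{1+\gamma}$ to conclude $s_n/t_n\to 1$; this yields $\lambda(w_n)\to\lambda_{\ast}$, i.e.\ $w_n$ is a bounded minimizing sequence for $\lambda$, and strong subsequential convergence to $\mathcal{N}^{0}_{\lambda_{\ast}}$ is then obtained by repeating the argument of Lemma \ref{L4}. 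You bypass the pair $(s_n,t_n)$ entirely: the single identity $\|v_n\|^{2}=\lambda_nP_n+Q_n$ converts the hypothesis into $(1+\gamma)\lambda_nP_n-(p-1)Q_n\to 0$, the compact embeddings pass this to the limit, and the observation that the limiting triple $(\ell,P,Q)$ reproduces exactly $\lambda_{\ast}$ when inserted into \eqref{E2} gives $\lambda(v)\le\lambda_{\ast}$ by weak lower semicontinuity, while minimality over $Z^{+}$ gives $\lambda(v)\ge\lambda_{\ast}$; equality forces $\|v\|^{2}=\ell$ and hence strong convergence. I checked the algebra $C(\gamma,p)\,\ell^{\frac{p+\gamma}{p-1}}Q^{-\frac{1+\gamma}{p-1}}P^{-1}=\lambda_{\ast}$ and it is right. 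Your route also gets $P>0$ for free from $(1+\gamma)\lambda_{\ast}P=(p-1)Q>0$, whereas the paper needs Lemma \ref{LL1}~$a)$ applied to $s_nw_n\in\mathcal{N}^{+}_{\lambda_n}$ for the corresponding lower bound; what the paper's longer computation buys is the explicit coalescence $s_n/t_n\to 1$ of the two critical points, a quantitative picture of the degeneration that your argument does not produce (and does not need for the statement). Two cosmetic points you should record: the relations $t^{-}_{\lambda_{\ast}}(w_n)=1$ and $t_n<1$ via Lemma \ref{L5} implicitly assume $\lambda_n<\lambda(w_n)$, and in the borderline case $\lambda_n=\lambda(w_n)$ the projection $t^{-}_{\lambda_n}(w_n)$ is the inflection point, so the bound follows by continuity and \eqref{N3} must be replaced by the corresponding equality defining $\mathcal{N}^{0}_{\lambda_n}$; and in part $b)$ the passage from $\tilde v\neq 0$ to $\tilde v\in Z^{+}$ deserves the one-line justification that $\tilde P>0$ (since $a>0$ and $\tilde v\not\equiv 0$) combined with $(p-1)\tilde Q=(1+\gamma)\lambda_{\ast}\tilde P$ forces $\tilde Q>0$.
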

\begin{proof} We prove only $a)$ since the proof of $b)$ follows the same strategy. It follows from Lemma \ref{LL1} $b)$ that there exists a positive constant $c$ such that $\int_{\mathbb{R}^{N}} b|w_{n}|^{p+1}\geq c$. We claim that the same holds for $\int_{\mathbb{R}^{N}} a|w_{n}|^{1-\gamma}$. To prove this, let us first prove that $t^{-}_{\lambda_{n}}(w_{n})\rightarrow \theta \in (0,\infty)$.
	
	Now, by applying Proposition \ref{prop21}, there exist   $s_{n}:=t^{+}_{\lambda_{n}}(w_{n})<t^{-}_{\lambda_{n}}(w_{n}):=t_{n}$
	such that
	$$
	\left\{
	\begin{array}{l}
	t_{n}^{2}||w_{n}||^{2}-t_{n}^{1-\gamma}\lambda_{n}\int_{\mathbb{R}^{N}} a|w_{n}|^{1-\gamma}-t_{n}^{p+1} \int_{\mathbb{R}^{N}} b|w_{n}|^{p+1}=0,\\
	t_{n}^{2}||w_{n}||^{2}+t_{n}^{1-\gamma}\lambda_{n}\gamma\int_{\mathbb{R}^{N}} a|w_{n}|^{1-\gamma}-t_{n}^{p+1}p \int_{\mathbb{R}^{N}} b|w_{n}|^{p+1}=o(1),\\
	s_{n}^{2}||w_{n}||^{2}-s_{n}^{1-\gamma}\lambda_{n}\int_{\mathbb{R}^{N}} a|w_{n}|^{1-\gamma}-s_{n}^{p+1} \int_{\mathbb{R}^{N}} b|w_{n}|^{p+1}=0,
	\end{array}
	\right.
	$$
	where the second line is a consequence of the assumption $(t^{-}_{\lambda_{n}}(w_{n}))^{2}\phi^{''}_{\lambda_{n},w_n}(t^{-}_{\lambda_{n}}(w_{n}))\rightarrow 0$.
	
	So, by solving the system formed by the first and third equation of the above system treating the integrals as unknown, and substituting them into the  second equation, we obtain  
	\begin{equation}\label{131}
		||w_{n}||^{2}t_{n}^{2}\left[\frac{(1+\gamma)\left(\frac{s_{n}}{t_{n}}\right)^{p+\gamma}+(p-1)-(\gamma+p)\left(\frac{s_{n}}{t_{n}}\right)^{1+\gamma}}{\left(\frac{s_{n}}{t_{n}}\right)^{p+\gamma}-1}\right]=o(1),~n\rightarrow \infty .
	\end{equation}
	
	Besides this, it follows from  $C\geq ||w_{n}||\geq c$,  Lemma \ref{LL1}, the first and third equations of system above and $s_{n}<t_{n}$ that there exists positive constants $\tilde{c},\tilde{C},\theta, \alpha$ such that $t_{n},s_{n}\in [\tilde{c},\tilde{C}],~t_{n}\rightarrow \theta,~s_{n}\rightarrow \alpha$  and   $||t_{n}w_{n}||\geq \tilde{c}$. By using these informations and taking limit on (\ref{131}),  we conclude that ${s_{n}}/{t_{n}}\rightarrow 1$ and $\theta=\alpha$, because  $t=1$ is the only zero of the function
	$$g(t)=(1+\gamma)t^{p+\gamma}+(p-1)-(\gamma+p)t^{1+\gamma}.$$
	
	Once $s_nw_n \in \mathcal{N}^{+}_{\lambda_{n}}$, we obtain from  Lemma \ref{LL1} $a)$  that $\int a|w_{n}|^{1-\gamma}\geq c$. Follows that
	$$
	\left\{
	\begin{array}{c}
	||\theta w_{n}||^{2}-\lambda_{\ast}\int_{\mathbb{R}^{N}} a|\theta w_{n}|^{1-\gamma}- \int_{\mathbb{R}^{N}} b|\theta w_{n}|^{p+1}=o(1),\\
	||\theta w_{n}||^{2}+\gamma\lambda_{\ast}\int_{\mathbb{R}^{N}} a|\theta w_{n}|^{1-\gamma}-p \int_{\mathbb{R}^{N}} b|\theta w_{n}|^{p+1}=o(1)\\
	\end{array}
	\right.
	$$
	and infer that
	$$\frac{p-1}{\gamma+p}\frac{||\theta w_{n}||^{2}}{\int_{\mathbb{R}^{N}} a|\theta w_{n}|^{1-\gamma}}=\lambda_{\ast}+o(1),~n\rightarrow \infty ,$$
	and
	$$\frac{1+\gamma}{\gamma+p}\frac{||\theta w_{n}||^{2}}{\int_{\mathbb{R}^{N}} b|\theta w_{n}|^{p+1}}=1+o(1),~n\rightarrow \infty.$$
	
	Therefore, it follows from  (\ref{E2}) and by $0$-homogeneity  that
	$$\lambda(w_{n})=\lambda(\theta w_{n})=\left(1+o(1)\right)^{\frac{1+\gamma}{p-1}}\left(\lambda_{\ast}+o(1)\right)\rightarrow \lambda_{\ast},~n\rightarrow \infty,$$
	and  $w_{n}$ is a bounded minimizing sequence for $\lambda_{\ast}$. Moreover, by following similar arguments as done  in the proof  of  Lemma \ref{L4}, we obtain, up to a subsequence, that $w_{n}\rightarrow w\in \mathcal{N}^{0}_{\lambda_{\ast}}$ and consequently $d(w_{n},\mathcal{N}^{0}_{\lambda_{\ast}})\rightarrow 0$ as $n\rightarrow \infty$. This ends the proof.
\end{proof}

\fim
\vspace{0.4cm}

Define
$$\mathcal{N}_{\lambda_{\ast},d,C}^{-}=\left\{w\in \mathcal{N}_{\lambda_{\ast}}^{-}:d(w,\mathcal{N}_{\lambda_{\ast}}^{0})>d,||w||\leq C \right\},$$
and
$$\mathcal{N}_{\lambda_{\ast},d,c}^{+}=\left\{u\in \mathcal{N}_{\lambda_{\ast}}^{+}:d(u,\mathcal{N}_{\lambda_{\ast}}^{0})>d,c\leq ||u|| \right\},$$
for $c,C,d>0$ given. As  an immediately consequence of Proposition \ref{PPP1}, we have.
\begin{corollary}\label{CCC1} Fix $c,C,d>0$. Then there exist $\epsilon>0$ satisfying:  
	\begin{itemize}
		\item[a)] there exists $\delta<0$ such that $(t^{-}_{\lambda}(w))^{2}\phi^{''}_{\lambda,w}(t^{-}_{\lambda}(w))<\delta$ for all $\lambda \in (\lambda_{\ast},\lambda_{\ast}+\epsilon)$ and  $w\in \mathcal{N}^{-}_{\lambda_{\ast},d,C}$. In particular, we have that $t_{\lambda}^-(w)w \in \mathcal{N}_{\lambda}^{-}$ and $w\in \hat{\mathcal{N}}_{\lambda}$ for all $\lambda \in (\lambda_{\ast},\lambda_{\ast}+\epsilon)$,
		\item[b)] there exists $\delta>0$ such that  $(t^{+}_{\lambda}(u))^{2}\phi^{''}_{\lambda}(t^{+}_{\lambda}(u))>\delta$ for all $\lambda \in (\lambda_{\ast},\lambda_{\ast}+\epsilon)$ and $u\in \mathcal{N}^{+}_{\lambda_{\ast},d,c}$. In particular, we have that $t_{\lambda}^+(u)u \in \mathcal{N}_{\lambda}^{+}$ and $u\in \hat{\mathcal{N}}_{\lambda}\cup \hat{\mathcal{N}}_{\lambda}^{+}$ for all $\lambda \in (\lambda_{\ast},\lambda_{\ast}+\epsilon)$.
	\end{itemize}
\end{corollary}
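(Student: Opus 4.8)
The plan is to read both items as uniform (in $w$, resp.\ $u$, and in $\lambda$) reformulations of Proposition \ref{PPP1}, and to deduce them by contradiction, letting the compactness already encoded in that proposition do the work. I will treat item $a)$ in detail; item $b)$ is entirely analogous.

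First I would fix the ambient norm bounds. For every $w\in\mathcal{N}^{-}_{\lambda_{\ast},d,C}$ one has $\|w\|\le C$ by definition, while item $b)$ of Lemma \ref{LL1} furnishes a constant $\tilde{c}>0$ with $\tilde{c}\le\|w\|$ for all $w\in\mathcal{N}^{-}_{\lambda_{\ast}}$; thus $\tilde{c}\le\|w\|\le C$ uniformly on $\mathcal{N}^{-}_{\lambda_{\ast},d,C}$. Before the quantitative estimate I must guarantee that $t^{-}_{\lambda}(w)$ is even defined, i.e.\ that the fiber map $\phi_{\lambda,w}$ stays in case $(I)$ of Proposition \ref{prop21}; by Proposition \ref{lambdava} this holds precisely when $\lambda<\lambda(w)$. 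So I would first establish a uniform gap: there is $\epsilon_{1}>0$ with $\lambda(w)\ge\lambda_{\ast}+\epsilon_{1}$ for all $w\in\mathcal{N}^{-}_{\lambda_{\ast},d,C}$. If this failed, I could pick $w_{n}\in\mathcal{N}^{-}_{\lambda_{\ast},d,C}$ with $\lambda(w_{n})\downarrow\lambda_{\ast}$; since $\tilde{c}\le\|w_{n}\|\le C$, the compactness argument of Lemmas \ref{L0} and \ref{L4} gives, up to a subsequence, $w_{n}\to w$ in $X$ with $\lambda(w)=\lambda_{\ast}$ and $\int_{\mathbb{R}^{N}} b|w|^{p+1}>0$, whence $w\in\mathcal{N}^{0}_{\lambda_{\ast}}$ by the characterization \eqref{109}. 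But then $d(w_{n},\mathcal{N}^{0}_{\lambda_{\ast}})\le\|w_{n}-w\|\to0$, contradicting $d(w_{n},\mathcal{N}^{0}_{\lambda_{\ast}})>d$. Consequently $t^{-}_{\lambda}(w)$ is well defined for all $\lambda\in(\lambda_{\ast},\lambda_{\ast}+\epsilon_{1})$ and $w\in\mathcal{N}^{-}_{\lambda_{\ast},d,C}$, and moreover $w\in\hat{\mathcal{N}}_{\lambda}$.

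With existence secured, I would prove the estimate itself by contradiction. Writing $Q(\lambda,w)=(t^{-}_{\lambda}(w))^{2}\phi^{''}_{\lambda,w}(t^{-}_{\lambda}(w))$, note that $Q(\lambda,w)<0$ whenever it is defined, since $t^{-}_{\lambda}(w)w\in\mathcal{N}^{-}_{\lambda}$. Suppose no admissible pair $(\epsilon,\delta)$ exists; then for each $n$ the choice $\epsilon=\min\{\epsilon_{1},1/n\}$ admits no $\delta<0$, so there are $\lambda_{n}\in(\lambda_{\ast},\lambda_{\ast}+1/n)$ and $w_{n}\in\mathcal{N}^{-}_{\lambda_{\ast},d,C}$ with $Q(\lambda_{n},w_{n})\ge-1/n$. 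Passing to a monotone subsequence I may assume $\lambda_{n}\downarrow\lambda_{\ast}$, and since $Q<0$ this forces $Q(\lambda_{n},w_{n})\to0$. Now item $a)$ of Proposition \ref{PPP1}, applied with the uniform bounds $\tilde{c}\le\|w_{n}\|\le C$, yields $d(w_{n},\mathcal{N}^{0}_{\lambda_{\ast}})\to0$, contradicting $d(w_{n},\mathcal{N}^{0}_{\lambda_{\ast}})>d$. Hence the desired $\epsilon$ and $\delta$ exist; the ``In particular'' is then immediate, because $Q(\lambda,w)<\delta<0$ gives $\phi^{''}_{\lambda,w}(t^{-}_{\lambda}(w))<0$, i.e.\ $t^{-}_{\lambda}(w)w\in\mathcal{N}^{-}_{\lambda}$, and the fiber map being in case $(I)$ yields $w\in\hat{\mathcal{N}}_{\lambda}$.

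For item $b)$ I would run the same scheme, with two changes dictated by the geometry of $\mathcal{N}^{+}_{\lambda_{\ast}}$: the uniform upper bound on $\|u\|$ now comes from the boundedness of $\mathcal{N}^{+}_{\lambda}$ in item $a)$ of Lemma \ref{LL1} (the set $\mathcal{N}^{+}_{\lambda_{\ast},d,c}$ only imposes $\|u\|\ge c$), and the target sign is reversed, so I seek $\delta>0$ with $(t^{+}_{\lambda}(u))^{2}\phi^{''}_{\lambda,u}(t^{+}_{\lambda}(u))>\delta$, invoking item $b)$ of Proposition \ref{PPP1} in the contradiction step. I expect the main obstacle to be the bookkeeping around the existence of $t^{\pm}_{\lambda}$ — that is, proving the uniform gap $\lambda(\cdot)\ge\lambda_{\ast}+\epsilon_{1}$ away from $\mathcal{N}^{0}_{\lambda_{\ast}}$, which is exactly where the hypotheses $d(\cdot,\mathcal{N}^{0}_{\lambda_{\ast}})>d$ and the compactness of $\mathcal{N}^{0}_{\lambda_{\ast}}$ (Lemma \ref{L4}) are indispensable; once this is in place, the quantitative estimate is a direct contradiction against Proposition \ref{PPP1}.
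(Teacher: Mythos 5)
Your proof is correct and takes essentially the same route as the paper: the paper presents Corollary \ref{CCC1} as an immediate consequence of Proposition \ref{PPP1}, and your contradiction argument (negate the uniform bound, extract $\lambda_{n}\downarrow\lambda_{\ast}$ and $w_{n}$ with $(t^{-}_{\lambda_{n}}(w_{n}))^{2}\phi^{''}_{\lambda_{n},w_{n}}(t^{-}_{\lambda_{n}}(w_{n}))\to 0$, then contradict $d(w_{n},\mathcal{N}^{0}_{\lambda_{\ast}})>d$) is exactly that derivation. Your preliminary uniform-gap step $\lambda(\cdot)\geq\lambda_{\ast}+\epsilon_{1}$ on $\mathcal{N}^{-}_{\lambda_{\ast},d,C}$, ensuring $t^{\pm}_{\lambda}$ is well defined for $\lambda$ slightly above $\lambda_{\ast}$, is a detail the paper leaves implicit, and you handle it correctly via the compactness arguments of Lemmas \ref{L0} and \ref{L4}.
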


To do a good choice of the parameter $d>0$ in the last corollary, we prove the next result, where  the sets $S^{-}_{\lambda_{\ast}}$ and $S^{+}_{\lambda_{\ast}}$ were defined at (\ref{2001}).
\begin{proposition}\label{PPP2} 
	There holds:
	\begin{itemize}
		\item[a)]$d(S^{-}_{\lambda_{\ast}},\mathcal{N}_{\lambda_{\ast}}^{0})>0,$
		\item[b)]$d(S^{+}_{\lambda_{\ast}},\mathcal{N}_{\lambda_{\ast}}^{0})>0.$
		
	\end{itemize}
	
\end{proposition}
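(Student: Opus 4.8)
The plan is to argue by contradiction and to reduce both statements to Corollary \ref{C1}, which forbids solutions of $(P_{\lambda_\ast})$ from lying on $\mathcal{N}^0_{\lambda_\ast}$. I would write out the argument only for $a)$, since $b)$ is obtained verbatim after replacing $\mathcal{N}^-_{\lambda_\ast}$ and $S^-_{\lambda_\ast}$ by $\mathcal{N}^+_{\lambda_\ast}$ and $S^+_{\lambda_\ast}$. So suppose, contrary to $a)$, that $d(S^-_{\lambda_\ast},\mathcal{N}^0_{\lambda_\ast})=0$. Then there is a sequence $w_n\in S^-_{\lambda_\ast}$ with $d(w_n,\mathcal{N}^0_{\lambda_\ast})\to 0$. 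Since $\mathcal{N}^0_{\lambda_\ast}$ is compact by Lemma \ref{L4}, the distance is attained at some $v_n\in\mathcal{N}^0_{\lambda_\ast}$ with $\|w_n-v_n\|\to 0$, and, passing to a subsequence, $v_n\to v\in\mathcal{N}^0_{\lambda_\ast}$. Consequently $w_n\to v$ strongly in $X$ and, up to a further subsequence, $w_n\to v$ a.e. in $\mathbb{R}^N$. By Corollary \ref{139}$(c)$ each $w_n$ is a solution of $(P_{\lambda_\ast})$, so
$$(w_n,\psi)-\int_{\mathbb{R}^N}b(x)w_n^{p}\psi\,dx=\lambda_\ast\int_{\mathbb{R}^N}a(x)w_n^{-\gamma}\psi\,dx,\qquad\forall\,\psi\in X.$$

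Next I would pass to the limit exactly as in Proposition \ref{T2a}. For $\psi\in X_+$ the left-hand side converges (strong convergence in $X$ and in $L^{p+1}$ together with $b\in L^\infty$), while Fatou's lemma applied to the singular term yields the variational inequality $(v,\psi)-\int b v^{p}\psi\ge\lambda_\ast\int a\,G\,\psi$, where $G=v^{-\gamma}$ on $\{v\neq0\}$ and $G=\infty$ on $\{v=0\}$. The finiteness of the left-hand side forces $\int a\,G\,\psi<\infty$ for every $\psi\in X_+$, hence $v>0$ a.e. and
$$(v,\psi)-\int_{\mathbb{R}^N}b(x)v^{p}\psi\,dx\ge\lambda_\ast\int_{\mathbb{R}^N}a(x)v^{-\gamma}\psi\,dx,\qquad\forall\,\psi\in X_+.$$
Since $v\in\mathcal{N}^0_{\lambda_\ast}\subset\mathcal{N}_{\lambda_\ast}$, the truncation device $\Psi_\epsilon=(v+\epsilon\psi)^+$ employed in Step 3 of Lemma \ref{L2} (and again in Proposition \ref{T2a}) would upgrade this inequality to the equality
$$(v,\psi)-\int_{\mathbb{R}^N}b(x)v^{p}\psi\,dx=\lambda_\ast\int_{\mathbb{R}^N}a(x)v^{-\gamma}\psi\,dx,\qquad\forall\,\psi\in X,$$
so that $v$ would be a solution of $(P_{\lambda_\ast})$ belonging to $\mathcal{N}^0_{\lambda_\ast}$. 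This contradicts Corollary \ref{C1}, and therefore $d(S^-_{\lambda_\ast},\mathcal{N}^0_{\lambda_\ast})>0$.

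The part I expect to require the most care is the passage to the limit in the singular term and the subsequent upgrade of the variational inequality to an equality; both are delicate because of the non-differentiability coming from $u^{-\gamma}$. However, they are handled by precisely the Fatou/positivity argument and the $(v+\epsilon\psi)^+$ truncation already developed in Lemma \ref{L2} and Proposition \ref{T2a}, so no new machinery is needed at this point, and I would simply refer to those proofs rather than reproduce the computation. The genuinely new ingredients are the compactness of $\mathcal{N}^0_{\lambda_\ast}$ from Lemma \ref{L4}, which supplies the strong limit $v$ from the vanishing distance, and Corollary \ref{139}$(c)$, which guarantees that the minimizers $w_n$ are actual solutions and hence that $v$ inherits the solution property in the limit.
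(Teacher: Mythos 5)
Your proposal is correct and follows essentially the same route as the paper's own proof: contradiction, extraction of $w_n\in S^-_{\lambda_\ast}$ and $v_n\in\mathcal{N}^0_{\lambda_\ast}$ with $\|w_n-v_n\|\to 0$, compactness of $\mathcal{N}^0_{\lambda_\ast}$ (Lemma \ref{L4}) to get $w_n\to v\in\mathcal{N}^0_{\lambda_\ast}$, the solution property of $w_n$ from Corollary \ref{139}, Fatou's lemma to obtain the variational inequality for $v$, the $(v+\epsilon\psi)^+$ truncation to upgrade it to an equality, and finally the contradiction with Corollary \ref{C1}. The only cosmetic difference is that you spell out the Fatou/positivity step and the truncation argument, whereas the paper handles them by citing the passage after (\ref{130}) in Proposition \ref{T2a}.
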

\begin{proof} We just prove $a)$ because the proof of $b)$ follows similar arguments.  Assume by contradiction that $d(S^{-}_{\lambda_{\ast}},\mathcal{N}_{\lambda_{\ast}}^{0})=0$. Then, there exist $w_{n}\in S^{-}_{\lambda_{\ast}}$ and  $v_{n}\in \mathcal{N}_{\lambda_{\ast}}^{0}$ such that $\rVert w_{n}-v_{n}\rVert \rightarrow 0$ as $n\rightarrow \infty $ and
	$$
	(w_{n},\psi)=\lambda_{\ast}\int a w_{n}^{-\gamma}\psi+\int bw_{n}^{p}\psi,~\forall \psi \in X, ~\forall n\in \mathbb{N}
	$$
	holds, where this equality is a consequence of $w_{n}$ being a solution for Problem $(P_{\lambda_{\ast}})$ as claimed in Corollary \ref{139}. Since $\mathcal{N}_{\lambda_{\ast}}^{0}$ is a compact set, see Lemma \ref{L4},  we may assume  that $v_{n}\rightarrow v\in \mathcal{N}_{\lambda_{\ast}}^{0}$ and hence $w_{n}\rightarrow v$ as well. From  Fatous' Lemma  we conclude  that 
	$$
	(v,\psi)\geq \lambda_{\ast}\int a v^{-\gamma}\psi+\int bv^{p}\psi,~\forall \psi \in X_+,
	$$
	that is, we arrived in the same situation as in (\ref{130}) with  $v\in \mathcal{N}_{\lambda_{\ast}}^{0}$. So, by following the same arguments as done after (\ref{130}), we are able to show 
	that $v\in  \mathcal{N}_{\lambda_{\ast}}^{0}$ is a solution for Problem  $(P_{\lambda_{\ast}})$, but this is impossible by Corollary \ref{C1}, which ends the proof.
\end{proof}
\fim
\vspace{0.4cm}

After Corollaries \ref{139}, \ref{CCC1} and Proposition \ref{PPP2}, we are in position to introduce  
\begin{equation}
	\label{defu}
	\tilde{J}^{-}_{\lambda,d^{-},C}\equiv\inf \left\{J^{-}_{\lambda}(w):w\in \mathcal{N}_{\lambda_{\ast},d^{-},C}^{-} \right\} ~~\mbox{and}~~\tilde{J}^{+}_{\lambda,d^{+},c}\equiv\inf \left\{J^{+}_{\lambda}(w):w\in \mathcal{N}_{\lambda_{\ast},d^{+},c}^{+} \right\}
\end{equation}
for each $0<c<c_{\lambda_\ast}$, $C>C_{\lambda_\ast}$ (see Corollary \ref{139} for both) $\lambda_\ast<\lambda < \lambda_\ast +\epsilon$ (see  Corollary \ref{CCC1}) and $0< d^{\pm} < d(S^{\pm}_{\lambda_{\ast}},\mathcal{N}_{\lambda_{\ast}}^{0})$  (see Proposition \ref{PPP2}) which implies that  $S_{\lambda_{{\ast}}}^{-}\subset \mathcal{N}_{\lambda_{\ast},d^{-},C}^{-}$ and $S_{\lambda_{{\ast}}}^{+}\subset \mathcal{N}_{\lambda_{\ast},d^{+},c}^{+}$. The proofs of the next propositions are similar to that of Propositions $4.5$, $4.6, 4.7$  in \cite{SM}. 
\begin{proposition}\label{PPP3} The $\lambda$-functions $\tilde{J}^{-}_{\lambda,d^{-},C}$ and $\tilde{J}^{+}_{\lambda,d^{+},C}$ are decreasing and there holds:
	\begin{itemize}
		\item[a)] $\displaystyle \lim_{\lambda \downarrow \lambda_{\ast}}\tilde{J}^{-}_{\lambda,d^{-},C}=\tilde{J}^{-}_{\lambda_{{\ast}}},$
		\item[b)] $\displaystyle \lim_{\lambda \downarrow \lambda_{\ast}}\tilde{J}^{+}_{\lambda,d^{+},c}=\tilde{J}^{+}_{\lambda_{{\ast}}}.$
	\end{itemize}
\end{proposition}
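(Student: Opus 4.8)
The plan is to separate the monotonicity from the two limits, using crucially that the constraint sets $\mathcal{N}^{-}_{\lambda_{\ast},d^{-},C}$ and $\mathcal{N}^{+}_{\lambda_{\ast},d^{+},c}$ are \emph{fixed} (they depend only on $\lambda_{\ast}$), while only the functional $J^{\pm}_{\lambda}$ varies with $\lambda$. First I would note that Corollary \ref{CCC1} ensures, for $\lambda\in(\lambda_{\ast},\lambda_{\ast}+\epsilon)$, that $t^{-}_{\lambda}(w)$ and $t^{+}_{\lambda}(u)$ are well defined on the whole of these sets, so that $J^{\pm}_{\lambda}$ makes sense there. Fixing $w$ in the set, Lemma \ref{L5}$\,b)$ gives that $\lambda\mapsto J^{-}_{\lambda}(w)$ is decreasing; since an infimum of a family of decreasing functions is decreasing, $\lambda\mapsto\tilde{J}^{-}_{\lambda,d^{-},C}$ is decreasing, and likewise for $\tilde{J}^{+}_{\lambda,d^{+},c}$. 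In particular the one-sided limits as $\lambda\downarrow\lambda_{\ast}$ exist.

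For the upper estimate, the decreasing character of $\lambda\mapsto J^{-}_{\lambda}(w)$ together with $t^{-}_{\lambda_{\ast}}(w)=1$ for $w\in\mathcal{N}^{-}_{\lambda_{\ast}}$ gives $J^{-}_{\lambda}(w)\le J^{-}_{\lambda_{\ast}}(w)=\Phi_{\lambda_{\ast}}(w)$ for all such $w$ and all $\lambda>\lambda_{\ast}$. Because $d^{-}$ and $C$ were chosen so that $S^{-}_{\lambda_{\ast}}\subset\mathcal{N}^{-}_{\lambda_{\ast},d^{-},C}\subset\mathcal{N}^{-}_{\lambda_{\ast}}$, the infimum of $\Phi_{\lambda_{\ast}}$ over the constraint set is trapped between the global value $\tilde{J}^{-}_{\lambda_{\ast}}$ and the value $\tilde{J}^{-}_{\lambda_{\ast}}$ attained on $S^{-}_{\lambda_{\ast}}$, hence equals $\tilde{J}^{-}_{\lambda_{\ast}}$. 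Taking the infimum over $w$ yields $\tilde{J}^{-}_{\lambda,d^{-},C}\le\tilde{J}^{-}_{\lambda_{\ast}}$ for every $\lambda>\lambda_{\ast}$.

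The matching lower estimate is the heart of the matter, and I would get it from a uniform-in-$\lambda$ modulus of continuity for $J^{-}_{\lambda}(w)$. By the computation in Lemma \ref{L5},
\begin{equation*}
	\frac{dJ^{-}_{\lambda}(w)}{d\lambda}=-\frac{(t^{-}_{\lambda}(w))^{1-\gamma}}{1-\gamma}\int_{\mathbb{R}^{N}}a|w|^{1-\gamma}dx .
\end{equation*}
On the constraint set, H\"older's inequality and the embedding $X\hookrightarrow L^{2}(\mathbb{R}^{N})$ bound $\int_{\mathbb{R}^{N}}a|w|^{1-\gamma}dx$ by a constant depending only on $\|a\|_{2/(1+\gamma)}$ and $C$, while the monotonicity of $\lambda\mapsto t^{-}_{\lambda}(w)$ together with $t^{-}_{\lambda_{\ast}}(w)=1$ forces $t^{-}_{\lambda}(w)\le1$ for $\lambda\ge\lambda_{\ast}$; hence $|dJ^{-}_{\lambda}(w)/d\lambda|\le K$ with $K$ independent of $w$ and of $\lambda\in(\lambda_{\ast},\lambda_{\ast}+\epsilon)$. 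Integrating from $\lambda_{\ast}$, $J^{-}_{\lambda}(w)\ge\Phi_{\lambda_{\ast}}(w)-K(\lambda-\lambda_{\ast})\ge\tilde{J}^{-}_{\lambda_{\ast}}-K(\lambda-\lambda_{\ast})$, and an infimum over $w$ gives $\tilde{J}^{-}_{\lambda,d^{-},C}\ge\tilde{J}^{-}_{\lambda_{\ast}}-K(\lambda-\lambda_{\ast})$. Combined with the upper estimate this squeezes $\lim_{\lambda\downarrow\lambda_{\ast}}\tilde{J}^{-}_{\lambda,d^{-},C}=\tilde{J}^{-}_{\lambda_{\ast}}$, which is $a)$.

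The proof of $b)$ runs along the same three steps; the only genuine change is the uniform control of $t^{+}_{\lambda}(u)$, which is now \emph{increasing} in $\lambda$, so the bound $t^{+}\le1$ is no longer available and must be replaced by an explicit upper bound. For this I would use Lemma \ref{LL1}$\,a)$, which for $t^{+}_{\lambda}(u)u\in\mathcal{N}^{+}_{\lambda}$ yields $(t^{+}_{\lambda}(u))^{1+\gamma}<\frac{\lambda(\gamma+p)}{(p-1)\|u\|^{2}}\int_{\mathbb{R}^{N}}a|u|^{1-\gamma}dx$; using $\|u\|\ge c$, the upper bound on $\|u\|$ coming from the boundedness of $\mathcal{N}^{+}_{\lambda_{\ast}}$ (again Lemma \ref{LL1}$\,a)$) and $\lambda<\lambda_{\ast}+\epsilon$, one obtains a uniform bound on $t^{+}_{\lambda}(u)$ and hence a uniform $K$, after which the argument is identical. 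I expect this uniform-derivative estimate — and within it the uniform upper bound on $t^{+}_{\lambda}(u)$ over the constraint set — to be the main technical obstacle, the monotonicity and the upper estimate being comparatively routine.
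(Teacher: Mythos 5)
Your argument is correct, and the comparison here is necessarily indirect: the paper never writes this proof out, it only asserts that the proofs are similar to Propositions 4.5--4.7 of \cite{SM} (the non-singular setting). The sequential argument one would naturally import from \cite{SM} runs: for $\lambda_{n}\downarrow\lambda_{\ast}$ take near-minimizers $w_{n}\in\mathcal{N}^{-}_{\lambda_{\ast},d^{-},C}$ and use that $t^{-}_{\lambda_{n}}(w_{n})$ maximizes $\phi_{\lambda_{n},w_{n}}$ on $[t^{+}_{\lambda_{n}}(w_{n}),\infty)\ni 1$, so that $J^{-}_{\lambda_{n}}(w_{n})\geq\phi_{\lambda_{n},w_{n}}(1)=\Phi_{\lambda_{\ast}}(w_{n})-\frac{\lambda_{n}-\lambda_{\ast}}{1-\gamma}\int a|w_{n}|^{1-\gamma}\geq\tilde{J}^{-}_{\lambda_{\ast}}-K(\lambda_{n}-\lambda_{\ast})$, then combine with the sandwich upper bound. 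You replace this fiber-map maximum trick by integrating the envelope identity $dJ^{\pm}_{\lambda}(w)/d\lambda=-\frac{(t^{\pm}_{\lambda}(w))^{1-\gamma}}{1-\gamma}\int a|w|^{1-\gamma}$ from Lemma \ref{L5}; both routes hinge on exactly the two uniform ingredients you isolate and verify correctly: (i) $\int a|w|^{1-\gamma}\leq\Vert a\Vert_{2/(1+\gamma)}(\kappa\Vert w\Vert)^{1-\gamma}$ ($\kappa$ the embedding constant of $X\hookrightarrow L^{2}(\mathbb{R}^{N})$) is uniformly bounded on the constraint sets, using $\Vert w\Vert\leq C$ on $\mathcal{N}^{-}_{\lambda_{\ast},d^{-},C}$ and the boundedness of $\mathcal{N}^{+}_{\lambda_{\ast}}$ from Lemma \ref{LL1}\,$a)$ on the other set; and (ii) a uniform bound on the projections, namely $t^{-}_{\lambda}(w)\leq t^{-}_{\lambda_{\ast}}(w)=1$ by monotonicity, and in the plus case your estimate $(t^{+}_{\lambda}(u))^{1+\gamma}<\frac{\lambda(\gamma+p)}{(p-1)\Vert u\Vert^{2}}\int a|u|^{1-\gamma}\leq\lambda\frac{\gamma+p}{p-1}\Vert a\Vert_{2/(1+\gamma)}\kappa^{1-\gamma}c^{-(1+\gamma)}$ from Lemma \ref{LL1}\,$a)$ and $\Vert u\Vert\geq c$, which is precisely where the plus case fails to be symmetric to the minus case and which you rightly flag as the technical crux. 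Your integration step is also legitimate: Corollary \ref{CCC1} forces $\lambda(w)\geq\lambda_{\ast}+\epsilon$ on these sets, so Lemma \ref{L5} applies on an open interval containing $[\lambda_{\ast},\lambda_{\ast}+\epsilon)$. What your route buys is a quantitative, one-sided Lipschitz estimate $\tilde{J}^{\pm}_{\lambda_{\ast}}-K(\lambda-\lambda_{\ast})\leq\tilde{J}^{\pm}_{\lambda,\cdot}\leq\tilde{J}^{\pm}_{\lambda_{\ast}}$, strictly more than the bare limits $a)$ and $b)$.

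One minor point: an infimum of decreasing functions is, as such, only non-increasing. If ``decreasing'' in the statement is read strictly, you additionally need a uniform positive lower bound for $(t^{\pm}_{\lambda}(w))^{1-\gamma}\int a|w|^{1-\gamma}$ over the constraint sets. For the plus case this is free from Lemma \ref{LL1}\,$a)$, since $\int a|u|^{1-\gamma}>\frac{(p-1)c^{2}}{\lambda_{\ast}(\gamma+p)}$ and $t^{+}_{\lambda}(u)\geq t^{+}_{\lambda_{\ast}}(u)=1$; for the minus case it follows from a short compactness argument in the spirit of Lemma \ref{lemma2} combined with $\inf\{\Vert w\Vert:w\in\mathcal{N}^{-}_{\lambda_{\ast}}\}>0$ and the lower bound on $t^{-}_{\lambda}(w)$ forced by $\phi''_{\lambda,w}(t^{-}_{\lambda}(w))<0$, or simply by evaluating at the minimizers of Proposition \ref{PPP4}. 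This is cosmetic and does not affect $a)$ or $b)$.
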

\begin{proposition}\label{PPP4}  There exists $\epsilon^{-}>0$ such that  ${J}^{-}_{\lambda}$ constrained to $\mathcal{N}_{\lambda_{\ast},d^{-},C}^{-}$ has a minimizer $\tilde{w}_{\lambda}\in \mathcal{N}_{\lambda_{\ast},d^{-},C}^{-}$ for all $\lambda \in (\lambda_{\ast},\lambda_{\ast}+\epsilon^{-})$ given.
\end{proposition}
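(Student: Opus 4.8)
The plan is to run the direct method over the constrained set $\mathcal{N}^{-}_{\lambda_{\ast},d^{-},C}$, obtaining strong convergence of a minimizing sequence through a fiber-map comparison of Brezis--Lieb type, and then separately (via Propositions \ref{PPP2} and \ref{PPP3}) ruling out that the minimizer escapes to the boundary $\{d(\cdot,\mathcal{N}^{0}_{\lambda_{\ast}})=d^{-}\}$; this last point is what fixes $\epsilon^{-}$. First I would take a minimizing sequence $\{w_{n}\}\subset \mathcal{N}^{-}_{\lambda_{\ast},d^{-},C}$ with $J^{-}_{\lambda}(w_{n})\to \tilde{J}^{-}_{\lambda,d^{-},C}$. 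Since $\|w_{n}\|\leq C$, it is bounded, so up to a subsequence $w_{n}\rightharpoonup \tilde{w}_{\lambda}$ in $X$, with $w_{n}\to \tilde{w}_{\lambda}$ in $L^{q}(\mathbb{R}^{N})$ for $q\in[2,2^{\ast})$ and a.e., by Lemma \ref{lemma1}. The nonlinear terms then pass to the limit: $\int_{\mathbb{R}^{N}}b|w_{n}|^{p+1}\to \int_{\mathbb{R}^{N}}b|\tilde{w}_{\lambda}|^{p+1}$ and $\int_{\mathbb{R}^{N}}a|w_{n}|^{1-\gamma}\to \int_{\mathbb{R}^{N}}a|\tilde{w}_{\lambda}|^{1-\gamma}$, by the Hölder/dominated-convergence argument of Lemma \ref{lemma2}. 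Because $w_{n}\in \mathcal{N}^{-}_{\lambda_{\ast}}$, inequality \eqref{N3} together with the lower bound $\inf\{\|w\|:w\in\mathcal{N}^{-}_{\lambda_{\ast}}\}>0$ from Lemma \ref{LL1}$\,b)$ keeps $\int_{\mathbb{R}^{N}}b|w_{n}|^{p+1}$ bounded below by a positive constant, whence $\int_{\mathbb{R}^{N}}b|\tilde{w}_{\lambda}|^{p+1}>0$ and $\tilde{w}_{\lambda}\in Z^{+}\setminus\{0\}$.

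The heart of the argument is strong convergence. Writing $\|w_{n}\|^{2}=\|\tilde{w}_{\lambda}\|^{2}+\theta^{2}+o(1)$ with $\theta\geq 0$ and using that the two integral terms converge, the fiber maps satisfy $\phi_{\lambda,w_{n}}(t)\to \phi_{\lambda,\tilde{w}_{\lambda}}(t)+\tfrac{t^{2}}{2}\theta^{2}$ uniformly on compact $t$-intervals. By Corollary \ref{CCC1}$\,a)$ the projection $t^{-}_{\lambda}(w_{n})$ is well defined, bounded and bounded away from $0$, so $t^{-}_{\lambda}(w_{n})\to \bar{t}\in(0,\infty)$ and $\tilde{J}^{-}_{\lambda,d^{-},C}=\lim \phi_{\lambda,w_{n}}(t^{-}_{\lambda}(w_{n}))=\max_{t}\bigl(\phi_{\lambda,\tilde{w}_{\lambda}}(t)+\tfrac{t^{2}}{2}\theta^{2}\bigr)$ over the relevant branch. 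If $\theta>0$, then since $\phi_{\lambda,\tilde{w}_{\lambda}}(t)+\tfrac{t^{2}}{2}\theta^{2}>\phi_{\lambda,\tilde{w}_{\lambda}}(t)$ for every $t>0$, evaluating at the maximizer of $\phi_{\lambda,\tilde{w}_{\lambda}}$ gives $\tilde{J}^{-}_{\lambda,d^{-},C}>\max_{t}\phi_{\lambda,\tilde{w}_{\lambda}}(t)=J^{-}_{\lambda}\bigl(t^{-}_{\lambda_{\ast}}(\tilde{w}_{\lambda})\tilde{w}_{\lambda}\bigr)$, the last equality using that $J^{-}_{\lambda}$ depends only on the ray through $\tilde{w}_{\lambda}$ (and that $\lambda<\lambda(\tilde{w}_{\lambda})$, so the $\lambda_{\ast}$-projection exists). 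Since $t^{-}_{\lambda_{\ast}}(\tilde{w}_{\lambda})\tilde{w}_{\lambda}\in\mathcal{N}^{-}_{\lambda_{\ast}}$ lies in the constraint set — its norm and its distance to $\mathcal{N}^{0}_{\lambda_{\ast}}$ being controlled through the homeomorphism $P^{-}$ of Proposition \ref{r1}$\,(iv)$ and the distance bound — this contradicts the definition of the infimum. Hence $\theta=0$ and $w_{n}\to \tilde{w}_{\lambda}$ in $X$. Passing $\phi^{'}_{\lambda_{\ast},w_{n}}(1)=0$ and $\phi^{''}_{\lambda_{\ast},w_{n}}(1)<0$ to the limit, and excluding $\mathcal{N}^{0}_{\lambda_{\ast}}$ via the distance bound, yields $\tilde{w}_{\lambda}\in\mathcal{N}^{-}_{\lambda_{\ast}}$ with $\|\tilde{w}_{\lambda}\|\leq C$, $d(\tilde{w}_{\lambda},\mathcal{N}^{0}_{\lambda_{\ast}})\geq d^{-}$, and $J^{-}_{\lambda}(\tilde{w}_{\lambda})=\tilde{J}^{-}_{\lambda,d^{-},C}$.

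It remains to upgrade $d(\tilde{w}_{\lambda},\mathcal{N}^{0}_{\lambda_{\ast}})\geq d^{-}$ to a strict inequality, which is where $\epsilon^{-}$ is selected. Suppose not: then there are $\lambda_{n}\downarrow\lambda_{\ast}$ and minimizers $\tilde{w}_{\lambda_{n}}$ with $d(\tilde{w}_{\lambda_{n}},\mathcal{N}^{0}_{\lambda_{\ast}})=d^{-}$. These are bounded, hence (by the strong-convergence argument above) converge to some $w^{\ast}\in\mathcal{N}^{-}_{\lambda_{\ast}}$ with $d(w^{\ast},\mathcal{N}^{0}_{\lambda_{\ast}})=d^{-}$, while Proposition \ref{PPP3}$\,a)$ forces $J^{-}_{\lambda_{\ast}}(w^{\ast})=\lim \tilde{J}^{-}_{\lambda_{n},d^{-},C}=\tilde{J}^{-}_{\lambda_{\ast}}$, i.e. $w^{\ast}\in S^{-}_{\lambda_{\ast}}$. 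But $0<d^{-}<d(S^{-}_{\lambda_{\ast}},\mathcal{N}^{0}_{\lambda_{\ast}})$ by Proposition \ref{PPP2}$\,a)$, contradicting $d(w^{\ast},\mathcal{N}^{0}_{\lambda_{\ast}})=d^{-}$. Taking $\epsilon^{-}$ below the threshold produced by this compactness argument guarantees $\tilde{w}_{\lambda}\in\mathcal{N}^{-}_{\lambda_{\ast},d^{-},C}$, so it is a genuine minimizer of $J^{-}_{\lambda}$ on the open constrained set.

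The main obstacle I expect is the strong-convergence step: one must rule out loss of mass, and the only available dominating device is the fiber-map comparison, whose conclusion requires knowing that the $\lambda_{\ast}$-projection of the weak limit genuinely belongs to the constrained set. Keeping this projected competitor away from $\mathcal{N}^{0}_{\lambda_{\ast}}$, and — conceptually the genuinely new ingredient — showing that the minimizer never touches the artificial boundary $\{d(\cdot,\mathcal{N}^{0}_{\lambda_{\ast}})=d^{-}\}$, is the delicate part that forces the smallness of $\epsilon^{-}$.
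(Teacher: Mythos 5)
Your overall architecture --- direct method over $\mathcal{N}^{-}_{\lambda_{\ast},d^{-},C}$, strong convergence of the minimizing sequence via a fiber-map comparison, then exclusion of the artificial boundary $\{d(\cdot,\mathcal{N}^{0}_{\lambda_{\ast}})=d^{-}\}$ through $S^{-}_{\lambda_{\ast}}$, Proposition \ref{PPP2} and Proposition \ref{PPP3} --- is the intended one: the paper gives no written proof of Proposition \ref{PPP4} (it defers to Propositions 4.5--4.7 of \cite{SM}), and your last paragraph is precisely the mechanism that fixes $\epsilon^{-}$ there (compare the Claim inside the proof of Proposition \ref{T2}). The problem is that your mass-loss exclusion, on which everything else rests (including that last paragraph, which re-invokes it), contains two genuine gaps, and both sit exactly where the regime $\lambda>\lambda_{\ast}$ differs from $\lambda<\lambda_{\ast}$.

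First, the existence of the projections of the weak limit. You assert parenthetically that ``$\lambda<\lambda(\tilde{w}_{\lambda})$, so the $\lambda_{\ast}$-projection exists'', but nothing in your argument yields this. For $\lambda<\lambda_{\ast}$ every $u\in Z^{+}$ satisfies $\lambda<\lambda_{\ast}\leq \lambda(u)$, so $t^{\pm}_{\lambda}(u)$ always exist --- this is what drives Lemma \ref{LL2} --- but beyond the extremal value this is precisely what breaks down: for the weak limit you only know $\lambda(\tilde{w}_{\lambda})\geq \lambda_{\ast}$. Under mass loss ($\theta>0$) the nondegenerate maximum $\bar{t}$ you extract belongs to the \emph{virtual} fiber map $g(t)=\phi_{\lambda,\tilde{w}_{\lambda}}(t)+\tfrac{t^{2}}{2}\theta^{2}$, i.e.\ to an object of norm $(\|\tilde{w}_{\lambda}\|^{2}+\theta^{2})^{1/2}$ with the integrals of $\tilde{w}_{\lambda}$. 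Since, at fixed integrals, $\lambda(\cdot)$ in \eqref{E2} scales like $(\|\cdot\|^{2})^{\frac{p+\gamma}{p-1}}$, the fact that $g$ has two nondegenerate critical points only says $\lambda<\lambda(\tilde{w}_{\lambda})\bigl(1+\theta^{2}/\|\tilde{w}_{\lambda}\|^{2}\bigr)^{\frac{p+\gamma}{p-1}}$, which is perfectly compatible with $\lambda(\tilde{w}_{\lambda})\in[\lambda_{\ast},\lambda]$. In that range, by Proposition \ref{lambdava}, $\phi_{\lambda,\tilde{w}_{\lambda}}$ falls under cases $(II)$--$(III)$ of Proposition \ref{prop21} (and if $\lambda(\tilde{w}_{\lambda})=\lambda_{\ast}$, even $\phi_{\lambda_{\ast},\tilde{w}_{\lambda}}$ has only an inflection point), so neither $t^{-}_{\lambda}(\tilde{w}_{\lambda})$ nor $t^{-}_{\lambda_{\ast}}(\tilde{w}_{\lambda})$ need exist: your competitor is then undefined.

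Second, membership of the competitor in the constraint set. Even when $t^{-}_{\lambda_{\ast}}(\tilde{w}_{\lambda})$ exists, you must place $t^{-}_{\lambda_{\ast}}(\tilde{w}_{\lambda})\tilde{w}_{\lambda}$ inside $\mathcal{N}^{-}_{\lambda_{\ast},d^{-},C}$, and for the distance constraint you appeal to the homeomorphism $P^{-}$ of Proposition \ref{r1}$\,(iv)$. That homeomorphism carries no quantitative information on $d(\cdot,\mathcal{N}^{0}_{\lambda_{\ast}})$, and the available inequality runs the wrong way: since $\mathcal{N}^{0}_{\lambda_{\ast}}$ is compact (Lemma \ref{L4}), choosing $v_{n}\in \mathcal{N}^{0}_{\lambda_{\ast}}$ with $\|w_{n}-v_{n}\|=d(w_{n},\mathcal{N}^{0}_{\lambda_{\ast}})$ and $v_{n}\to v_{0}$, weak lower semicontinuity of the norm gives $d(\tilde{w}_{\lambda},\mathcal{N}^{0}_{\lambda_{\ast}})\leq \|\tilde{w}_{\lambda}-v_{0}\|\leq \liminf d(w_{n},\mathcal{N}^{0}_{\lambda_{\ast}})$. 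Weak limits are controlled from \emph{above}, never from below, so $\tilde{w}_{\lambda}$ --- and a fortiori its rescaling by $t^{-}_{\lambda_{\ast}}(\tilde{w}_{\lambda})\neq 1$ --- may lie within $d^{-}$ of $\mathcal{N}^{0}_{\lambda_{\ast}}$; in that event the strict inequality $J^{-}_{\lambda}(t^{-}_{\lambda_{\ast}}(\tilde{w}_{\lambda})\tilde{w}_{\lambda})<\tilde{J}^{-}_{\lambda,d^{-},C}$ contradicts nothing, because the infimum is taken only over the constrained set. (The norm half of your claim is salvageable: when $\theta>0$ one has $\phi^{'}_{\lambda_{\ast},\tilde{w}_{\lambda}}(1)=-\theta^{2}<0$ and $\phi^{''}_{\lambda_{\ast},\tilde{w}_{\lambda}}(1)<0$, so in case $(I)$ necessarily $t^{-}_{\lambda_{\ast}}(\tilde{w}_{\lambda})<1$ and $\|t^{-}_{\lambda_{\ast}}(\tilde{w}_{\lambda})\tilde{w}_{\lambda}\|<\|\tilde{w}_{\lambda}\|\leq C$; it is the distance constraint, together with the first gap, that resists.) Closing these gaps requires inputs your proof never brings to bear on the weak limit --- for instance the uniform nondegeneracy $\phi^{''}_{\lambda_{\ast},w_{n}}(1)\leq -\rho<0$ that Proposition \ref{PPP1}/Corollary \ref{CCC1} provide on $\{w\in \mathcal{N}^{-}_{\lambda_{\ast}}: d(w,\mathcal{N}^{0}_{\lambda_{\ast}})\geq d^{-},\ c\leq\|w\|\leq C\}$, combined with the value estimates of Proposition \ref{PPP3} --- so, as written, the pivotal step of your argument does not close.
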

\begin{proposition}\label{PPP5} There exists $\epsilon^{+}>0$ such that  ${J}^{+}_{\lambda}$ constrained to $\mathcal{N}_{\lambda_{\ast},d^{+},c}^{+}$ has a minimizer $\tilde{u}_{\lambda}\in \mathcal{N}_{\lambda_{\ast},d^{+},c}^{+}$ for all $\lambda \in (\lambda_{\ast},\lambda_{\ast}+\epsilon^{+})$ given.	
\end{proposition}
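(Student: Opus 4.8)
The plan is to adapt the scheme of Proposition~4.7 in \cite{SM} to the singular setting: fix $\lambda$ just above $\lambda_{\ast}$, take a minimizing sequence for $J^{+}_{\lambda}$ inside $\mathcal{N}^{+}_{\lambda_{\ast},d^{+},c}$, extract a strongly convergent subsequence, and show that its limit stays strictly inside the constraint region. First I would set $\epsilon^{+}\le \epsilon$, with $\epsilon$ as in Corollary~\ref{CCC1} b), so that for every $w\in \mathcal{N}^{+}_{\lambda_{\ast},d^{+},c}$ and every $\lambda\in(\lambda_{\ast},\lambda_{\ast}+\epsilon^{+})$ the projection $t^{+}_{\lambda}(w)$ is well defined, $t^{+}_{\lambda}(w)w\in \mathcal{N}^{+}_{\lambda}$, and $(t^{+}_{\lambda}(w))^{2}\phi^{''}_{\lambda,w}(t^{+}_{\lambda}(w))>\delta>0$. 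Choosing $\{u_{n}\}\subset \mathcal{N}^{+}_{\lambda_{\ast},d^{+},c}$ with $J^{+}_{\lambda}(u_{n})\to \tilde{J}^{+}_{\lambda,d^{+},c}$, Lemma~\ref{LL1} a) bounds $\{u_{n}\}$, so up to a subsequence $u_{n}\rightharpoonup u$ in $X$, $u_{n}\to u$ in $L^{q}(\mathbb{R}^{N})$ for $q\in[2,2^{\ast})$ and a.e., with $u\ge 0$; in particular $\int_{\mathbb{R}^{N}}a|u_{n}|^{1-\gamma}\to \int_{\mathbb{R}^{N}}a|u|^{1-\gamma}$ and $\int_{\mathbb{R}^{N}}b|u_{n}|^{p+1}\to \int_{\mathbb{R}^{N}}b|u|^{p+1}$ as in Lemma~\ref{lemma2}. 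Combining $\|u_{n}\|\ge c$ with \eqref{N2} gives $\int_{\mathbb{R}^{N}}a|u_{n}|^{1-\gamma}\ge (p-1)c^{2}/[\lambda_{\ast}(p+\gamma)]$, whence $\int_{\mathbb{R}^{N}}a|u|^{1-\gamma}>0$ and $u\in X_{+}$.

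Next I would prove strong convergence. Writing $s_{n}=t^{+}_{\lambda}(u_{n})$, the estimates of Lemma~\ref{LL1} and the uniform non-degeneracy $(s_{n})^{2}\phi^{''}_{\lambda,u_{n}}(s_{n})>\delta$ (as in Proposition~\ref{PPP1}) confine $s_{n}$ to a compact subinterval of $(0,\infty)$, so $s_{n}\to s_{0}>0$ and $s_{n}u_{n}\rightharpoonup s_{0}u$. Suppose the convergence were only weak, i.e. $\ell:=\lim\|u_{n}\|^{2}>\|u\|^{2}$. Passing to the limit in $\phi^{'}_{\lambda,u_{n}}(s_{n})=0$ and using the strong convergence of the subcritical terms gives $\phi^{'}_{\lambda,u}(s_{0})=s_{0}(\|u\|^{2}-\ell)<0$, which by the fiber description of Proposition~\ref{prop21} forces $s_{0}<t^{+}_{\lambda}(u)$; hence $\Phi_{\lambda}(t^{+}_{\lambda}(u)u)<\Phi_{\lambda}(s_{0}u)\le \liminf \Phi_{\lambda}(s_{n}u_{n})=\tilde{J}^{+}_{\lambda,d^{+},c}$ by the weak lower semicontinuity of $\Phi_{\lambda}$ (Lemma~\ref{lemma2}). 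Projecting $u$ back onto $\mathcal{N}^{+}_{\lambda_{\ast}}$ and verifying, through the strong convergence of the subcritical terms and the distance bound, that this projection still lies in $\mathcal{N}^{+}_{\lambda_{\ast},d^{+},c}$, the admissibility of the back-projection then gives $\tilde{J}^{+}_{\lambda,d^{+},c}\le \Phi_{\lambda}(t^{+}_{\lambda}(u)u)$, contradicting the strict inequality just obtained. Therefore $u_{n}\to u$ in $X$, and then $u\in \mathcal{N}^{+}_{\lambda_{\ast}}$, $\|u\|\ge c$ and $d(u,\mathcal{N}^{0}_{\lambda_{\ast}})\ge d^{+}$.

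Finally I would upgrade $d(u,\mathcal{N}^{0}_{\lambda_{\ast}})\ge d^{+}$ to a strict inequality, so that the limit belongs to the open constraint set. Shrinking $\epsilon^{+}$ if necessary, I would argue by contradiction along $\lambda_{k}\downarrow \lambda_{\ast}$: if the associated minimizers $\tilde{u}_{\lambda_{k}}$ had $d(\tilde{u}_{\lambda_{k}},\mathcal{N}^{0}_{\lambda_{\ast}})=d^{+}$, then Proposition~\ref{PPP3} b) yields $J^{+}_{\lambda_{k}}(\tilde{u}_{\lambda_{k}})=\tilde{J}^{+}_{\lambda_{k},d^{+},c}\to \tilde{J}^{+}_{\lambda_{\ast}}$, and extracting a strong limit (via Lemma~\ref{LL1} and the previous step) would produce a point of $S^{+}_{\lambda_{\ast}}$ lying at distance exactly $d^{+}$ from $\mathcal{N}^{0}_{\lambda_{\ast}}$, contradicting $d^{+}<d(S^{+}_{\lambda_{\ast}},\mathcal{N}^{0}_{\lambda_{\ast}})$ from Proposition~\ref{PPP2}. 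Thus for $\lambda$ close enough to $\lambda_{\ast}$ the minimizer $\tilde{u}_{\lambda}$ satisfies $d(\tilde{u}_{\lambda},\mathcal{N}^{0}_{\lambda_{\ast}})>d^{+}$, i.e. $\tilde{u}_{\lambda}\in \mathcal{N}^{+}_{\lambda_{\ast},d^{+},c}$ and realizes $\tilde{J}^{+}_{\lambda,d^{+},c}$. The hard part will be the strong-convergence step: because of the non-differentiable singular term, producing the admissible back-projection and controlling $\int_{\mathbb{R}^{N}}a u^{-\gamma}\psi$ require the Fatou and monotonicity arguments of Lemma~\ref{L2} rather than a direct derivative computation, and ruling out that the minimizer drifts onto the boundary $\{d=d^{+}\}$ or degenerates into $\mathcal{N}^{0}_{\lambda_{\ast}}$ is exactly what forces the uniform non-degeneracy of Corollary~\ref{CCC1} and the separation of Proposition~\ref{PPP2}.
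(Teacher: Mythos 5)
Your overall architecture is the one the paper intends (the paper gives no self-contained proof here: it defers to Propositions 4.5--4.7 of \cite{SM} ``after a few modifications''): direct minimization, boundedness via Lemma \ref{LL1}, the uniform non-degeneracy of Corollary \ref{CCC1}, and the separation $d^{+}<d(S^{+}_{\lambda_{\ast}},\mathcal{N}^{0}_{\lambda_{\ast}})$ of Proposition \ref{PPP2} combined with Proposition \ref{PPP3} to keep the minimizer off the boundary, exactly as in the Claim inside the proof of Proposition \ref{T2}. But your strong-convergence step has a genuine gap, and it sits precisely where the regime $\lambda>\lambda_{\ast}$ differs from Lemma \ref{LL2}. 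First, from $\phi^{'}_{\lambda,u}(s_{0})<0$ you conclude $s_{0}<t^{+}_{\lambda}(u)$; however, for $\lambda>\lambda_{\ast}$ the weak limit $u$ need not possess $t^{+}_{\lambda}(u)$ at all. Lemma \ref{L0} only gives $\lambda(u)\geq\lambda_{\ast}$, and if $\lambda(u)\leq\lambda$ the fiber $\phi_{\lambda,u}$ is of type $(II)$ or $(III)$ of Proposition \ref{prop21} (see Proposition \ref{lambdava}) and has no critical point with $\phi^{''}>0$; even in case $(I)$, $\phi^{'}_{\lambda,u}<0$ also holds on $(t^{-}_{\lambda}(u),\infty)$, so $s_{0}>t^{-}_{\lambda}(u)$ must be excluded. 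This degeneracy cannot be dismissed: each $u_{n}$ with $\int_{\mathbb{R}^{N}}b|u_{n}|^{p+1}>0$ satisfies $\lambda(u_{n})>\lambda$ (that is what makes $t^{+}_{\lambda}(u_{n})$ well defined), but when compactness fails one has $\lambda(u)=\lim\lambda(u_{n})\left(\|u\|^{2}/\ell\right)^{\frac{p+\gamma}{p-1}}<\lim\lambda(u_{n})$, which may fall anywhere in $[\lambda_{\ast},\lambda]$.

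Second, and more seriously, the ``admissibility of the back-projection'' is asserted, not proved, and it is the actual crux. Membership in $\mathcal{N}^{+}_{\lambda_{\ast},d^{+},c}$ is a condition on the specific representative $t^{+}_{\lambda_{\ast}}(u)u$ of the ray of $u$, not on the ray: when the convergence is only weak, $\phi^{'}_{\lambda_{\ast},u}(1)=\|u\|^{2}-\ell<0$, so $t^{+}_{\lambda_{\ast}}(u)\neq 1$ and the projection rescales $u$. Neither $d\bigl(t^{+}_{\lambda_{\ast}}(u)u,\mathcal{N}^{0}_{\lambda_{\ast}}\bigr)>d^{+}$ nor $\|t^{+}_{\lambda_{\ast}}(u)u\|\geq c$ follows from the constraints satisfied by the $u_{n}$; weak lower semicontinuity of the distance to the compact set $\mathcal{N}^{0}_{\lambda_{\ast}}$ (Lemma \ref{L4}) controls $d(u,\mathcal{N}^{0}_{\lambda_{\ast}})$, not the distance of the rescaled point, and in the extreme case $\lambda(u)=\lambda_{\ast}$ the ray of $u$ meets $\mathcal{N}^{0}_{\lambda_{\ast}}$, so its projection in the sense of Proposition \ref{r1} $(v)$ lies on $\mathcal{N}^{0}_{\lambda_{\ast}}$ itself, at distance zero. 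Without admissibility the chain $\tilde{J}^{+}_{\lambda,d^{+},c}\leq\Phi_{\lambda}(t^{+}_{\lambda}(u)u)$ breaks and no contradiction is obtained. A correct proof must first use the uniform bounds of Corollary \ref{CCC1} (and their analogue at $\lambda_{\ast}$, obtained from the compactness argument of Proposition \ref{PPP1}) to show that along a minimizing sequence the limiting fiber stays non-degenerate and the limit ray stays uniformly away from $\mathcal{N}^{0}_{\lambda_{\ast}}$, and only then run the comparison; your sketch names these tools but does not carry out this step, which is exactly where the difficulty of going beyond the extremal value is concentrated.
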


Unlike the non-singular case, local or global minimizers for the energy functional constrained to Nehari sets, are not necessarily solutions for Problem $(P_{\lambda})$. In the next Proposition we will establish that this claim is true under our assumptions. The main point in order to prove that the minima found in Propositions \ref{PPP4}, \ref{PPP5} are solutions of $(P_{\lambda})$ is to prove that $\tilde{w}_{\lambda}$ and $\tilde{u}_{\lambda}$ are interior points of $\mathcal{N}_{\lambda_{\ast},d^{-},C}^{-}$ and $\mathcal{N}_{\lambda_{\ast},d^{+},c}^{+}$ respectively.
\begin{proposition}\label{T2}
	There exists $\epsilon>0$ such that the problem $(P_{\lambda})$ admits at least two solutions $w_{\lambda}\in \mathcal{N}_{\lambda}^{-}$ and $u_{\lambda}\in \mathcal{N}_{\lambda}^{+}$ for each $ \lambda \in (\lambda_{{\ast}},\lambda_{{\ast}}+\epsilon)$.
\end{proposition}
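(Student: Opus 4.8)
The plan is to promote the constrained minimizers $\tilde{w}_\lambda \in \mathcal{N}_{\lambda_\ast,d^-,C}^-$ and $\tilde{u}_\lambda \in \mathcal{N}_{\lambda_\ast,d^+,c}^+$ furnished by Propositions \ref{PPP4} and \ref{PPP5} into genuine solutions of $(P_\lambda)$. I would set $w_\lambda := t_\lambda^-(\tilde{w}_\lambda)\tilde{w}_\lambda$ and $u_\lambda := t_\lambda^+(\tilde{u}_\lambda)\tilde{u}_\lambda$; by Corollary \ref{CCC1} these projections are well defined and satisfy $w_\lambda \in \mathcal{N}_\lambda^-$ and $u_\lambda \in \mathcal{N}_\lambda^+$ for every $\lambda \in (\lambda_\ast, \lambda_\ast+\epsilon)$. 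Taking $\epsilon := \min\{\epsilon^-, \epsilon^+\}$ from those propositions (and shrinking it further below if needed), the whole issue reduces to showing that $w_\lambda$ and $u_\lambda$ solve $(P_\lambda)$, and for this the decisive step is to verify that $\tilde{w}_\lambda$ and $\tilde{u}_\lambda$ are \emph{interior} points of their respective constraint sets, i.e. that neither the distance constraint $d(\cdot, \mathcal{N}_{\lambda_\ast}^0) = d^\pm$ nor the norm constraint ($\|\cdot\| = C$, respectively $\|\cdot\| = c$) is active at the minimizer.

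I would establish interiority by contradiction through a limit process as $\lambda \downarrow \lambda_\ast$. Suppose that along some sequence $\lambda_n \downarrow \lambda_\ast$ the minimizer $\tilde{w}_{\lambda_n}$ lies on the boundary of $\mathcal{N}_{\lambda_\ast,d^-,C}^-$, so that either $d(\tilde{w}_{\lambda_n}, \mathcal{N}_{\lambda_\ast}^0) = d^-$ or $\|\tilde{w}_{\lambda_n}\| = C$ for all $n$. The sequence is bounded and bounded away from $0$ (by Lemma \ref{LL1}), and mimicking the strong-convergence argument of Lemma \ref{L4} and of Proposition \ref{T2a} one extracts $\tilde{w}_{\lambda_n} \to w_0$ in $X$ with $w_0 \in \mathcal{N}_{\lambda_\ast}^-$; in particular the active boundary condition passes to the limit. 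On the other hand, continuity of the fiber projections together with Proposition \ref{PPP3} yields
$$J_{\lambda_\ast}^-(w_0) = \lim_{n\to\infty} J_{\lambda_n}^-(\tilde{w}_{\lambda_n}) = \lim_{n\to\infty} \tilde{J}_{\lambda_n,d^-,C}^- = \tilde{J}_{\lambda_\ast}^-,$$
so $w_0 \in S_{\lambda_\ast}^-$. But by the choices $d^- < d(S_{\lambda_\ast}^-, \mathcal{N}_{\lambda_\ast}^0)$ (Proposition \ref{PPP2}) and $C > C_{\lambda_\ast}$ (Corollary \ref{139}), every element of $S_{\lambda_\ast}^-$ satisfies $d(\cdot, \mathcal{N}_{\lambda_\ast}^0) > d^-$ and $\|\cdot\| < C$ strictly, contradicting the active boundary condition. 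Hence for $\lambda$ close enough to $\lambda_\ast$ the minimizer $\tilde{w}_\lambda$ is interior, and the same argument applied to $S_{\lambda_\ast}^+$ (using $c < c_{\lambda_\ast}$ from Corollary \ref{139} for the lower norm bound) handles $\tilde{u}_\lambda$.

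With interiority in hand, the final step converts the interior minimizer into a solution exactly as in Section 3. Because $\tilde{w}_\lambda$ is interior, for each $\psi \in X_+$ the perturbation $\tilde{w}_\lambda + \epsilon\psi$ remains admissible for small $\epsilon > 0$, so minimality gives $J_\lambda^-(\tilde{w}_\lambda + \epsilon\psi) \geq J_\lambda^-(\tilde{w}_\lambda)$; combining this with the monotonicity of the fiber map from Proposition \ref{prop21} and passing to the limit $\epsilon \downarrow 0$ via Fatou's lemma, just as in Lemmas \ref{LL3} and \ref{LL4}, produces $a w_\lambda^{-\gamma}\psi \in L^1(\mathbb{R}^N)$ together with the variational inequality
$$\int_{\mathbb{R}^N} \nabla w_\lambda \nabla \psi + V(x) w_\lambda \psi\, dx - \lambda \int_{\mathbb{R}^N} a(x) w_\lambda^{-\gamma}\psi\, dx - \int_{\mathbb{R}^N} b(x) w_\lambda^p \psi\, dx \geq 0, \quad \forall \psi \in X_+.$$
Testing with $\Psi = (w_\lambda + \epsilon\psi)^+$ for arbitrary $\psi \in X$ and repeating the splitting computation of Step 3 of Lemma \ref{L2} (as carried out in Proposition \ref{T1}) upgrades this to an equality for all $\psi \in X$, so $w_\lambda$ solves $(P_\lambda)$; the identical reasoning gives that $u_\lambda \in \mathcal{N}_\lambda^+$ solves $(P_\lambda)$.

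I anticipate the interiority step to be the main obstacle: the distance and norm constraints are genuinely non-variational, so ruling out active constraints cannot be done by a direct first-order comparison and instead forces the limiting argument above, which in turn relies on the compactness of $\mathcal{N}_{\lambda_\ast}^0$ (Lemma \ref{L4}), the strict separation $d(S_{\lambda_\ast}^\pm, \mathcal{N}_{\lambda_\ast}^0) > 0$ (Proposition \ref{PPP2}), and the convergence $\tilde{J}_{\lambda,d^\pm,\cdot}^\pm \to \tilde{J}_{\lambda_\ast}^\pm$ (Proposition \ref{PPP3}). A secondary subtlety is the singular term: unlike the regular case, interiority alone does not immediately yield a critical point, which is precisely why the truncation argument of Lemma \ref{L2} must be re-run to promote the one-sided inequality to the Euler--Lagrange equation.
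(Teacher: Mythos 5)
Your proposal follows the same architecture as the paper's proof: project the constrained minimizers $\tilde{w}_{\lambda},\tilde{u}_{\lambda}$ of Propositions \ref{PPP4}, \ref{PPP5} onto $\mathcal{N}_{\lambda}^{-},\mathcal{N}_{\lambda}^{+}$ via Corollary \ref{CCC1}, prove that the norm constraint is inactive by a limit process $\lambda\downarrow\lambda_{\ast}$ landing in $S^{\pm}_{\lambda_{\ast}}$, and then run the perturbation--projection--Fatou machinery of Lemmas \ref{LL3}, \ref{LL4} and Step 3 of Lemma \ref{L2}. However, there is a genuine gap at the decisive step: the strong convergence $\tilde{w}_{\lambda_{n}}\to w_{0}$ in $X$. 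You propose to obtain it by ``mimicking the strong-convergence argument of Lemma \ref{L4} and of Proposition \ref{T2a}'', but neither argument applies to the sequence at hand. The compactness proof of Lemma \ref{L4} works because every element of $\mathcal{N}^{0}_{\lambda_{\ast}}$ is an exact minimizer of the $0$-homogeneous functional $\lambda(\cdot)$, so failure of strong convergence would give $\lambda(u)<\liminf\lambda(u_{n})=\lambda_{\ast}$ by weak lower semicontinuity of the norm, contradicting the definition of $\lambda_{\ast}$. The strong convergence in Proposition \ref{T2a} is obtained by testing the equation solved by $w_{\lambda_{n}}$ against $w_{\lambda_{n}}-w_{\lambda_{\ast}}$ and applying Fatou's lemma; it uses essentially that the $w_{\lambda_{n}}$ are already solutions of $(P_{\lambda_{n}})$. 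Your $\tilde{w}_{\lambda_{n}}$ are neither: they minimize $J^{-}_{\lambda_{n}}$ over $\mathcal{N}^{-}_{\lambda_{\ast},d^{-},C}$, they minimize no Rayleigh-type quotient, and whether they solve anything is precisely what is being proved. So the citation does not close the step, and with it the interiority claim and the whole proof remain open.

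What actually closes it, and is the heart of the paper's argument, is a two-sided energy comparison across $\lambda_{\ast}$. If $\tilde{w}_{n}\rightharpoonup\tilde{w}$ but not strongly, then $\|\tilde{w}\|<\liminf\|\tilde{w}_{n}\|$, whence $\phi^{'}_{\lambda_{n},\tilde{w}_{n}}(t_{\lambda_{\ast}}(\tilde{w}))>0$ for large $n$, so $t^{+}_{\lambda_{n}}(\tilde{w}_{n})<t_{\lambda_{\ast}}(\tilde{w})<t^{-}_{\lambda_{n}}(\tilde{w}_{n})$, and the fiber-map monotonicity of Proposition \ref{prop21} together with Proposition \ref{PPP3} $a)$ yields the strict inequality $\Phi_{\lambda_{\ast}}(t_{\lambda_{\ast}}(\tilde{w})\tilde{w})<\lim\tilde{J}^{-}_{\lambda_{n},d^{-},C}=\tilde{J}^{-}_{\lambda_{\ast}}$. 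The contradiction then comes from the \emph{opposite} inequality $\tilde{J}^{-}_{\lambda_{\ast}}\leq\Phi_{\lambda_{\ast}}(t_{\lambda_{\ast}}(\tilde{w})\tilde{w})$, which the paper obtains by approaching $\lambda_{\ast}$ from the left, using the left-continuity and monotonicity of $\tilde{J}^{-}_{\lambda}$ (Proposition \ref{CC1}), Proposition \ref{T1} and Corollary \ref{C3}; equivalently one may invoke the characterization \eqref{140} of $\tilde{J}^{-}_{\lambda_{\ast}}$ as the infimum of the projected energy over $\mathcal{N}^{-}_{\lambda_{\ast}}\cup\mathcal{N}^{0}_{\lambda_{\ast}}$. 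This use of information from \emph{both} sides of $\lambda_{\ast}$ is the missing idea in your write-up. A secondary, fixable imprecision: $\tilde{w}_{\lambda}+\epsilon\psi$ is not itself ``admissible'' (it leaves the Nehari set $\mathcal{N}^{-}_{\lambda_{\ast}}$); one must first project it by $t^{-}_{\lambda_{\ast}}(\tilde{w}_{\lambda}+\epsilon\psi)$, verify via the strict bound $\|\tilde{w}_{\lambda}\|<C$ and continuity that this projection stays in $\mathcal{N}^{-}_{\lambda_{\ast},d^{-},C}$, and only then invoke minimality together with a second projection onto $\mathcal{N}^{-}_{\lambda}$ -- the paper's double projection does exactly this before the Fatou passage.
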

\begin{proof} First, let us take advantage of the existence of the minimizer $\tilde{w}_{\lambda}\in  \mathcal{N}_{\lambda_{\ast},d^{-},C}^{-}     $ to build a solution for Problem $(P_{\lambda})$ in $\mathcal{N}_{\lambda}^{-}$. Let us do this by reminding that the definitions given at (\ref{defu}) and (\ref{2002}) implies that we can consider  $w_{\lambda}:=t^{-}_{\lambda}(\tilde{w}_{\lambda})\tilde{w}_{\lambda}\in \mathcal{N}_{\lambda}^{-}.$ Below, let us prove that $w_{\lambda}$ is a solution for Problem $(P_{\lambda})$ if $\lambda>\lambda_\ast$ varies in an appropriate range. To this end, firstly we prove that $\tilde{w}_{\lambda}$ is a interior point of $\mathcal{N}_{\lambda_{\ast},d^{-},C}^{-}$ for $\lambda$ close $\lambda_{{\ast}}$, which is equivalently to prove \\
	
	\noindent{\bf Claim:} there exists an $\epsilon_1>0 $ such that 
	\begin{equation}\label{143}
		||\tilde{w}_{\lambda}||<C,~\forall ~\lambda \in (\lambda_{\ast},\lambda_{\ast}+\epsilon_{1}),
	\end{equation}
	where $C>C_{\lambda_{{\ast}}}$ and $C_{\lambda_{{\ast}}}>0$ is given by Corollary \ref{139}.
	
	Indeed, let $\lambda_{n}\downarrow \lambda_{{\ast}}$ and denote $\tilde{w}_{\lambda_{n}}=\tilde{w}_{n}$. Due to the boundedness  of $\mathcal{N}_{\lambda_{\ast},d^{-},C}^{-}$, we may assume that $\tilde{w}_{\lambda_{n}}\rightharpoonup \tilde{w}$ in $X$. In fact, we have that $\tilde{w}_{n}\to \tilde{w}$ in $X$, otherwise we would have
	$||\tilde{w}||<\liminf||\tilde{w}_{n}||$
	which implies
	$$0=\phi^{'}_{\lambda_{\ast},\tilde{w}}(t_{\lambda_{{\ast}}}(\tilde{w}))<\liminf \phi^{'}_{\lambda_{n},\tilde{w}_{n}}(t_{\lambda_{\ast}}(\tilde{w})),$$
	where $t_{\lambda_{{\ast}}}$ is given by Proposition \ref{r1} $(iv)$. It follows that there exists $k$ such that  $\phi^{'}_{\lambda_{n},\tilde{w}_{n}}(t_{\lambda_{\ast}}(\tilde{w}))>0$ for $n\geq k$, that is,  $t^{+}_{n}(\tilde{w}_{n})<t_{\lambda_{\ast}}(\tilde{w})<t^{-}_{n}(\tilde{w}_{n})$ by Proposition \ref{prop21}. Therefore
	
	$$\Vert t_{\lambda_{\ast}}(\tilde{w})\tilde{w} \Vert^2 <\liminf_{{n\to \infty}}\Vert t_{\lambda_{n}}(\tilde{w_n})\tilde{w_n} \Vert^2,$$
	 which lead us to
	\begin{align}
		\label{1361}
		\Phi_{\lambda_{\ast} }(t_{\lambda
			_{\ast}}(\tilde{w})\tilde{w})<\displaystyle \liminf_{\lambda_{n}\downarrow \lambda_{\ast}}\Phi_{\lambda_{n} }(t_{\lambda
			_{\ast}}(\tilde{w})\tilde{w_{n}})
		\leq \displaystyle \liminf_{\lambda_{n}\downarrow \lambda_{\ast}}\Phi_{\lambda_{n} }(t^{-}_{\lambda
			_{n}}(\tilde{w}_{n})\tilde{w}_{n})=\hat{J}^{-}_{\lambda_{\ast}},
	\end{align}
	where the Proposition \ref{PPP3} $a)$ was used to get the last equality. Moreover, it follows from Proposition \ref{CC1} $b)$,  Proposition \ref{T1} and Corollary \ref{C3} that
	\begin{align*}
		\hat{J}_{\lambda_{\ast}}^{-}=\displaystyle \lim_{\lambda_{n}^{'}\uparrow \lambda_{\ast}}\hat{J}_{\lambda_{n}^{'}}^{-}\leq \displaystyle \lim_{\lambda_{n}^{'}\uparrow \lambda_{\ast}}\Phi_{\lambda_{n}^{'} }(t^{-}_{\lambda
			_{n}^{'}}(\tilde{w})\tilde{w})=\Phi_{\lambda_{\ast} }(t_{\lambda
			_{\ast}}(\tilde{w})\tilde{w})
	\end{align*}
	holds for any ${\lambda_{n}^{'}\uparrow \lambda_{\ast}}$. By combining the last inequality with (\ref{1361}) we get a contradiction and hence $\tilde{w}_{n}\to \tilde{w}$ in $X$.

	As a consequence of this strong convergence and  Lemma \ref{LL1} $b)$, we obtain $\int b|\tilde{w}|^{p+1}>0$ and $\phi^{'}_{\lambda_{\ast},\tilde{w}}(1)=0$ and  $\phi^{''}_{\lambda_{\ast},\tilde{w}}(1)\leq 0$, which means by Proposition \ref{prop21} that  
	$\tilde{w}\in \mathcal{N}_{\lambda_{\ast}}^{-} \cup \mathcal{N}_{\lambda_{\ast}}^{0}$. Since 
	$$d(\tilde{w},\mathcal{N}_{\lambda_{\ast}}^{0})=\lim_{n\to \infty} d( \tilde{w}_n,\mathcal{N}_{\lambda_{\ast}}^{0})\geq d^{-}>0,$$ we have that 
	$\tilde{w}\not\in \mathcal{N}_{\lambda_{\ast}}^{0}$, that is,   $\tilde{w}\in \mathcal{N}_{\lambda_{\ast}}^{-}$. 
	
	To conclude the proof of the claim, we just need to show that $\tilde{w}\in S_{\lambda_{\ast}}^{-}$. First note that similar arguments as done in the proof of Proposition \ref{PPP1}-$a)$ proves that  $t^{-}_{\lambda_n}(\tilde{w}_{n})\rightarrow t\in (0,\infty)$. From the strong convergence $\tilde{w}_{n}\to \tilde{w}$ in $X$, we get that $\phi^{'}_{\lambda_{\ast},\tilde{w}}(t)=0$ and  $\phi^{''}_{\lambda_{\ast},\tilde{w}}(t)\leq 0$, which lead us to conclude that  $t=1$ since $\tilde{w}\in \mathcal{N}_{\lambda_{\ast}}^{-}$ and  Proposition \ref{prop21}. From Proposition \ref{PPP3} and the strong convergence again, we obtain
	$$\Phi_{\lambda_{\ast}}(\tilde{w})=\lim_{\lambda_{n}\downarrow \lambda_{\ast}}\Phi_{\lambda_{n} }(t_{\lambda
		_{n}}(\tilde{w_{n}})\tilde{w_{n}})=\hat{J}^{-}_{\lambda_{\ast}},$$
	which means that $\tilde{w}\in S_{\lambda_{\ast}}^{-}$.
	Therefore, from Corollary \ref{139} we conclude that
	$\displaystyle \limsup_{\lambda \downarrow \lambda_{\ast}}||\tilde{w}_{\lambda}||\leq ||\tilde{w}||\leq C_{\lambda_{\ast}}$. 
	Since $C>C_{\lambda_{{\ast}}}$, the claim is true. This ends the proof of the claim.
	
	To complete the proof that $w_{\lambda}:=t^{-}_{\lambda}(\tilde{w}_{\lambda})\tilde{w}_{\lambda}\in \mathcal{N}_{\lambda}^{-}$ is a solution to Problem $(P_\lambda)$, let us perturb  $\tilde{w}_{\lambda}$ by appropriate elements of $X_+$ and perform projections of it over $\mathcal{N}_{\lambda_{\ast},d^{-},C}^{-}$ and $\mathcal{N}_{\lambda}^{-}$ Let $\psi \in X_{+}$ and  $\lambda \in (\lambda_{\ast},\lambda_{\ast}+\epsilon_{1})$. Since $\tilde{w}_{\lambda}\in \mathcal{N}_{\lambda_{\ast}}^{-}$, we are able to  apply the implicit function Theorem, as done in Lemma \ref{LL3} $b)$, to prove that $t^{-}_{\lambda_{\ast}}(\tilde{w}_{\lambda}+\theta \psi)$ (see Proposition \ref{prop21}) is well defined, is continuous for $\theta>0$ small enough and $t^{-}_{\lambda_{\ast}}(\tilde{w}_{\lambda}+\theta \psi)\longrightarrow 1$ as $\theta\longrightarrow 0$. 
	
	Thus, it follows from  (\ref{143}) and $d(\tilde{w}_{\lambda},\mathcal{N}_{\lambda_{\ast}}^{0})> d^{-}$ (see definition of $\mathcal{N}_{\lambda_{\ast},d^{-},C}^{-}$)  that 
	$$
	||t^{-}_{\lambda_{\ast}}(\tilde{w}_{\lambda}+\theta \psi)(\tilde{w}_{\lambda}+\theta \psi)||<C~\mbox{and} ~d(t^{-}_{\lambda_{\ast}}(\tilde{w}_{\lambda}+\theta \psi)(\tilde{w}_{\lambda}+\theta \psi),\mathcal{N}_{\lambda_{\ast}}^{0})> d^{-},
	$$
	holds for $\theta>0$ small enough, which implies 
	\begin{equation}\label{145}
		t^{-}_{\lambda_{\ast}}(\tilde{w}_{\lambda}+\theta \psi)(\tilde{w}_{\lambda}+\theta \psi)\in \mathcal{N}_{\lambda_{\ast},d^{-},C}^{-}.
	\end{equation}

	Therefore, by (\ref{145}) and Corollary \ref{CCC1}, we obtain
	$$t_{\lambda}(\theta)t^{-}_{\lambda_{\ast}}(\tilde{w}_{\lambda}+\theta \psi)(\tilde{w}_{\lambda}+\theta \psi)=:t_{\lambda}^{-}(t^{-}_{\lambda_{\ast}}(\tilde{w}_{\lambda}+\theta \psi)(\tilde{w}_{\lambda}+\theta \psi))t^{-}_{\lambda_{\ast}}(\tilde{w}_{\lambda}+\theta \psi)(\tilde{w}_{\lambda}+\theta \psi)\in \mathcal{N}_{\lambda}^{-}$$
	 By  applying Proposition \ref{PPP4}, we have
	$$\Phi_{\lambda}(t_{\lambda}(\theta)t^{-}_{\lambda_{\ast}}(\tilde{w}_{\lambda}+\theta \psi)(\tilde{w}_{\lambda}+\theta \psi))={J}^{-}_{\lambda}(t^{-}_{\lambda_{\ast}}(\tilde{w}_{\lambda}+\theta \psi)(\tilde{w}_{\lambda}+\theta \psi))\geq
	\tilde{J}^{-}_{\lambda,d^{-},C}= \Phi_{\lambda}(t_{\lambda}^{-}(\tilde{w}_{\lambda})\tilde{w}_{\lambda}),$$
	which lead us to conclude that
	\begin{equation}
		\label{1401}
		\Phi_{\lambda}(t_{\lambda}(\theta)t^{-}_{\lambda_{\ast}}(\tilde{w}_{\lambda}+\theta \psi)(\tilde{w}_{\lambda}+\theta \psi))\geq \Phi_{\lambda}(t_{\lambda}^{-}(\tilde{w}_{\lambda})t^{-}_{\lambda_{\ast}}(\tilde{w}_{\lambda}+\theta \psi)\tilde{w}_{\lambda}),
	\end{equation}
	holds for all $\theta>0$ small enough, after using  Proposition \ref{prop21}.
	
	Again, due to the fact that $t_{\lambda}^{-}(\tilde{w}_{\lambda})\tilde{w}_{\lambda}\in \mathcal{N}_{\lambda}^{-}$, we are able to apply the implicit function Theorem, as in Lemma \ref{LL3} $b)$ with the same function $F$ at the point
	$$\Big(t_{\lambda}^{-}(\tilde{w}),||\tilde{w}_{\lambda}||^{2},\lambda\int_{\mathbb{R}^{N}}a|\tilde{w}_{\lambda}|^{1-\gamma},\int_{\mathbb{R}^{N}}b|\tilde{w}_{\lambda}|^{p+1}\Big)$$
	to show  that $t_{\lambda}(\theta)\rightarrow t_{\lambda}^{-}(\tilde{w})$ as $\theta \rightarrow 0$. Since (\ref{1401}) can be read as
	\begin{align*}
		(t_{\lambda}(\theta)t^{-}_{\lambda_{\ast}}(\tilde{w}_{\lambda}+\theta \psi))^{2}\frac{\left[||\tilde{w}_{\lambda}+\theta \psi||^{2}-||\tilde{w}_{\lambda}||^{2}\right]}{\theta}-\frac{(t_{\lambda}(\theta)t^{-}_{\lambda_{\ast}}(\tilde{w}_{\lambda}+\theta \psi))^{p+1}}{p+1}\int \frac{b(\tilde{w}_{\lambda}+\theta \psi)^{p+1}-b(\tilde{w}_{\lambda})^{p+1}}{\theta}\\
		\geq \frac{(t_{\lambda}(\theta)t^{-}_{\lambda_{\ast}}(\tilde{w}_{\lambda}+\theta \psi))^{1-\gamma}}{1-\gamma}\lambda\int \frac{a(\tilde{w}_{\lambda}+\theta \psi)^{1-\gamma}-a(\tilde{w}_{\lambda})^{1-\gamma}}{\theta},
	\end{align*}
	we can follow the arguments done in Lemma \ref{LL4}, Fatou's Lemma and $t_{\lambda}(\theta)\rightarrow t_{\lambda}^{-}(\tilde{w})$ as $\theta \rightarrow 0$, to infer that
	\begin{align*}
		(t_{\lambda}^{-}(\tilde{w}_{\lambda}))^{2}(\tilde{w}_{\lambda},\psi)-(t_{\lambda}^{-}(\tilde{w}_{\lambda}))^{p+1}\int b \tilde{w}_{\lambda}^{p}\psi \geq (t_{\lambda}^{-}(\tilde{w}_{\lambda}))^{1-\gamma}\lambda \int a\tilde{w}_{\lambda}^{-\gamma}\psi,
	\end{align*}
	that is,
	$$(w_{\lambda},\psi)-\int b w_{\lambda}^{p}\psi\geq \lambda \int a w_{\lambda}^{-\gamma}\psi.$$
	  to show To conclude that $w_{\lambda}\in \mathcal{N}_{\lambda}^{-}$ is a solution from $(P_{\lambda})$, we do as in Proposition \ref{T1}.
	\medskip
	
	To complete the proof of Proposition \ref{T2}, let us follow the arguments  done just above with minors adjustments. First, by setting $u_{\lambda}=t^{+}_{\lambda}(\tilde{u}_{\lambda})\tilde{u}_{\lambda} \in \mathcal{N}_{\lambda}^{+}$, with $\tilde{u}_{\lambda}\in \mathcal{N}_{\lambda_{\ast},d^{+},c}^{-}$ being the minimizer of ${J}^{+}_{\lambda}$ constrained to $\mathcal{N}_{\lambda_{\ast},d^{+},c}^{+}$ as given in Proposition \ref{PPP5}, and adjusting the proof of the above claim, we also prove the below claim.
	
	\noindent{\bf Claim:} there exists an $\epsilon_2>0 $ such that 
	$$
	||\tilde{u}_{\lambda}||>c,~\forall ~\lambda \in (\lambda_{\ast},\lambda_{\ast}+\epsilon_{2}),
	$$
	where $c<c_{\lambda_{{\ast}}}$ and $c_{\lambda_{{\ast}}}>0$ is given by Corollary \ref{139}.
	
	After this claim,  by  perturbing  $\tilde{u}_{\lambda}$ by appropriate elements of $X_+$, performing projections of it over $\mathcal{N}_{\lambda_{\ast},d^{+},c}^{+}$ and $\mathcal{N}_{\lambda}^{+}$ and following the same strategy, we can prove that $u_{\lambda}\in \mathcal{N}_{\lambda}^{+}$ is a solution from $(P_{\lambda})$.
	
	Finally, the proof of Proposition follows by taking  $\epsilon=\min\left\{\epsilon_{1},\epsilon_{2}\right\}>0$, that is,  for each $\lambda \in (\lambda_{\ast},\lambda_{\ast}+\epsilon)$ the problem $(P_{\lambda})$ admits at least two solutions $u_{\lambda}\in \mathcal{N}_{\lambda}^{+}$ and $w_{\lambda}\in \mathcal{N}_{\lambda}^{-}$. This ends the proof.
\end{proof}
\fim
\vspace{0.4cm}

\section{Proof of Theorems}

In these section, we are going to prove the main Theorems. Let us begin ending the proof of Theorem \ref{TP1}.

\noindent\textbf{Proof of the Theorem \ref{TP1}.}
First, we note that the multiplicity is given by Propositions  \ref{T1}, \ref{T2a} and \ref{T2}. About qualitative statements, we point out that 
$a)$ is a consequence of Proposition \ref{prop21}. The statement $b)$ follows from Lemma \ref{LL1} and Sobolev embeddings. Let us prove $c)$. To prove that $u_{\lambda}$ is a ground state solution for each $0<\lambda\leq \lambda_{{\ast}}$, let us assume that $w$ is another  solution for Problem $(P_{\lambda})$.  Then $w\in \mathcal{N}_{\lambda}^{+} \cup \mathcal{N}_{\lambda}^{-}$ by either Lemma \ref{L1} $a)$ or Corollary \ref{C1}. If $w\in \mathcal{N}_{\lambda}^{+}$, then $\Phi_{\lambda}(u_{\lambda})=\tilde{J}_{\lambda}^{+}\leq \Phi_{\lambda}(w)$ by definition of $\tilde{J}_{\lambda}^{+}$. On the other hand, if $w\in \mathcal{N}_{\lambda}^{-}$, it follows from  Proposition \ref{prop21} and definition of $\tilde{J}_{\lambda}^{+}$ that  $\Phi_{\lambda}(w)>\Phi_{\lambda}(t^{+}_{\lambda}w)\geq \tilde{J}_{\lambda}^{+}=\Phi_{\lambda}(u_{\lambda})$ holds. So, combining both cases, we conclude that  $u_{\lambda}$ is a ground state solution for Problem $(P_\lambda)$. Now, by (\ref{123}) we have that $\Phi_{\lambda}(u_{\lambda})<0$ for all $0<\lambda<\lambda_{{\ast}}+\epsilon$. From (\ref{N2}) and Sobolev embeddings, we have that $\displaystyle \lim_{\lambda \to 0}||u_{\lambda}||=0$. The  statement $d)$ follow from Propositions \ref{CC1}, \ref{PPP3}.

Finally, let us prove  the item $e)$. First, we note that $\hat{\lambda}$ and $\lambda_{{\ast}}$, as defined at (\ref{eq41}) and (\ref{E3}), respectively, are such that $\hat{\lambda}<\lambda_{{\ast}}$ and $\hat{\lambda} = \inf\{\hat{\lambda}(w):w \in X_+~\mbox{and }\int_{\mathbb{R}^{N}} b
|w|^{p+1}>0 \}$, where $(\hat{\lambda}(w),\hat{t}(w))$ is the unique solution of the system 
$\phi_{\lambda,w}(t)=0, \phi^{'}_{\lambda,w}(t)=0$. So, it follows from Proposition \ref{T1} that there exists a $w_{\hat{\lambda}} \in \mathcal{N}_{\hat{\lambda}}^{-}$ solution of Problem $(P_{\hat{\lambda}})$.

Now, by applying  Proposition \ref{prop21}, we obtain that
\begin{equation}
	\label{141a}
	\Phi_{\lambda}(w_{\lambda})=\phi_{\lambda,w_{\lambda}}(1)\geq \phi_{\lambda,w_{\lambda}}(t(w_{\lambda})) = \Phi_{\lambda}(t(w_{\lambda})w_{\lambda})>\Phi_{\hat{\lambda}( w_{\lambda})}(t(w_{\lambda})w_{\lambda})=0,
\end{equation}
holds	for each $0<\lambda <\hat{\lambda}$ given, where $w_{\lambda} \in \mathcal{N}_{\lambda}^{-}$ is the solution of $(P_{\lambda})$ given by Proposition \ref{T1}. 

On the other hand,  by proceeding as done in Lemma \ref{L0}, we are able to prove that there exists a $w \in X_+$ such that $\hat{\lambda}=\hat{\lambda}(w)$ and  $\Phi_{\hat{\lambda}}(w)=\phi_{\hat{\lambda},w}(1)=\phi^{'}_{\hat{\lambda},w}(1)=0$
. Hence, the Proposition \ref{prop21} imply that $t^{-}(w_{\hat{\lambda}})=1$, which lead us  
\begin{equation}\label{149}
	0=\Phi_{\hat{\lambda}}(w)\geq \Phi_{\hat{\lambda}}(w_{\hat{\lambda}})=\tilde{J}^{-}_{\hat{\lambda}}.
\end{equation}

As a consequence of (\ref{141a}) and of the fact that   $\tilde{J}^{-}_{\lambda}$ is a decreasing and left-continuous function, we have that $\Phi_{\hat{\lambda}}(w_{\hat{\lambda}})=\tilde{J}^{-}_{\hat{\lambda}}\geq 0$. So, this inequality  together with (\ref{149}) lead us to  conclude that $\tilde{J}^{-}_{\hat{\lambda}}=\Phi_{\hat{\lambda}}(w_{\hat{\lambda}})=0$. The rest of the proof  follows from the fact that the function $\tilde{J}^{-}_{\lambda}=\Phi_{\lambda}(w_{\lambda})$ is decreasing for $0<\lambda<\lambda_{{\ast}}+\epsilon$, as showed in Propositions \ref{CC1}, \ref{PPP3}. 
\fim

Below, we are going to prove Theorem \ref{TP2}.

\noindent\textbf{Proof of the Theorem \ref{TP2}.} Take a smooth bounded domain $\Omega \subset \mathbb{R}^N$ and consider the eigenvalue problem
\begin{equation}\label{rs}\tag{$A_{\Omega}$}
	\left\{
	\begin{aligned}
		-&\Delta u +V(x)u = \lambda m(x)u
		~\mbox{in}~ \Omega \\
		& u>0~\mbox{in }\Omega,~~u\in H^{1}_{0}(\Omega), \nonumber
	\end{aligned}
	\right.
\end{equation}
where $m(x)=\min\left\{a(x),b(x)\right\}$. So, as a consequence of Theorem 3 in Brezis-Nirenberg \cite{BN}, we have.
\begin{lemma}\label{APB} The  first eigenvalue $\lambda_{1}$ of the problem \eqref{rs} is positive. Moreover, its associated eigenfunction $e_1$ is positive, $e_{1}\in C^{1}(\overline{\Omega})\cap H^{2}(\Omega)$ and $\partial e_{1}/\partial \nu \leqslant 0$ on $\partial \Omega$, where $\nu \in \mathbb{R}^N$ is the unit exterior normal to $\partial \Omega$.
\end{lemma}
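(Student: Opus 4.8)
The plan is to obtain $\lambda_1$ as the minimum of a Rayleigh quotient and then to read off the qualitative properties of the minimizer from the strong maximum principle. Under the hypotheses of Theorem~\ref{TP2} one has $b>0$ in $\Omega$, while $a>0$ everywhere and $a\in L^\infty(\Omega)$; hence the weight $m=\min\{a,b\}$ satisfies $0<m\in L^\infty(\Omega)$ on the bounded set $\Omega$. I would therefore define
\begin{equation*}
\lambda_1=\inf_{u\in H^1_0(\Omega)\setminus\{0\}}\frac{\int_\Omega\big(|\nabla u|^2+V(x)u^2\big)\,dx}{\int_\Omega m(x)u^2\,dx}.
\end{equation*}
Since $V$ is continuous on the compact set $\overline\Omega$ and $V\ge V_0>0$, the numerator is equivalent to the square of the $H^1_0(\Omega)$-norm; combined with $m\in L^\infty(\Omega)$ this yields $\lambda_1\ge V_0\|m\|_{L^\infty(\Omega)}^{-1}>0$, which already proves positivity.

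I would then attain the infimum by the direct method. Taking a minimizing sequence normalized by $\int_\Omega m u_n^2=1$, the norm equivalence bounds $\{u_n\}$ in $H^1_0(\Omega)$, so by the Rellich theorem $u_n\rightharpoonup e_1$ in $H^1_0(\Omega)$ and $u_n\to e_1$ in $L^2(\Omega)$. As $m\in L^\infty(\Omega)$, the denominator converges, giving $\int_\Omega m e_1^2=1$, while weak lower semicontinuity of the numerator shows that $e_1$ realizes $\lambda_1$. Because both integrals are unchanged when $e_1$ is replaced by $|e_1|$, I may take $e_1\ge0$, and its Euler--Lagrange equation is exactly the weak form of \eqref{rs} with $\lambda=\lambda_1$.

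It remains to upgrade regularity and sign. Writing $-\Delta e_1=(\lambda_1 m-V)e_1$ with right-hand side in $L^{2^\ast}(\Omega)$ and bootstrapping via $L^p$-elliptic estimates on the smooth domain $\Omega$, I obtain $e_1\in W^{2,q}(\Omega)$ for every finite $q$, hence $e_1\in H^2(\Omega)\cap C^1(\overline\Omega)$ by Sobolev embedding. Finally, since $V\ge V_0>0$ and $m>0$, the nonnegative function $e_1$ satisfies $(-\Delta+V)e_1=\lambda_1 m e_1\ge0$, so $e_1$ is a nontrivial supersolution of an operator with nonnegative zeroth-order coefficient. The strong maximum principle then forces $e_1>0$ in $\Omega$, and Hopf's boundary-point lemma gives $\partial e_1/\partial\nu\le0$ on $\partial\Omega$; this is the content of Theorem~3 in \cite{BN}.

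The main obstacle I anticipate is not the existence of $\lambda_1$ but the boundary behaviour: one must apply the Hopf lemma to an equation whose zeroth-order coefficient $V-\lambda_1 m$ has no fixed sign and is only bounded measurable. I would circumvent this exactly as above, by keeping the whole term $V$ on the left so that the operator $-\Delta+V$ has a strictly positive zeroth-order coefficient and $e_1$ is a genuine nonnegative supersolution; this is precisely the form in which the cited Brezis--Nirenberg result guarantees both the strict positivity of $e_1$ in $\Omega$ and the sign of its outward normal derivative on $\partial\Omega$.
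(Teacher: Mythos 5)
Your proof is correct, and it is in fact more than the paper provides: the paper gives no argument for Lemma \ref{APB} at all, stating it as an immediate consequence of Theorem 3 in Brezis--Nirenberg \cite{BN} (the same result it later invokes to get $u_{\lambda}^{-\gamma}\in L^{\infty}(K)$ on compact subsets). Your argument supplies the missing construction: the Rayleigh-quotient definition of $\lambda_{1}$ with the lower bound $\lambda_{1}\geq V_{0}\|m\|_{L^{\infty}(\Omega)}^{-1}>0$ (which uses exactly the right structural facts, namely $V\geq V_{0}>0$ and $0<m\leq\min\{a,b\}$ bounded on $\Omega$ under the hypotheses of Theorem \ref{TP2}), attainment by the direct method (the denominator passes to the limit since $m\in L^{\infty}(\Omega)$ and Rellich gives strong $L^{2}$ convergence, so the weak limit is nontrivial), the Euler--Lagrange identification of the eigenpair, a $W^{2,q}$ bootstrap on the smooth bounded domain yielding $e_{1}\in H^{2}(\Omega)\cap C^{1}(\overline{\Omega})$, and positivity plus the boundary sign via the strong maximum principle and Hopf's lemma. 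Your device of keeping $V$ on the left-hand side, so that $e_{1}$ is a nonnegative supersolution of $-\Delta+V$ with strictly positive zeroth-order coefficient, is a clean way to avoid the indefinite coefficient $V-\lambda_{1}m$, and it is indeed the mechanism behind the cited Brezis--Nirenberg statement. Two minor remarks: (i) the strong maximum principle requires $\Omega$ to be connected, which is implicit in the paper's use of the word ``domain'' (otherwise a Rayleigh minimizer could vanish identically on one component); (ii) the asserted inequality $\partial e_{1}/\partial\nu\leq 0$ already follows trivially from $e_{1}\geq 0$ in $\Omega$ and $e_{1}=0$ on $\partial\Omega$ once $e_{1}\in C^{1}(\overline{\Omega})$, while Hopf's lemma actually gives the strict inequality. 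In short, the paper's citation buys brevity; your route buys a self-contained verification, including the existence of the eigenpair, which the cited theorem by itself does not furnish.
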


\begin{proof}[Proof of Theorem \ref{TP2}] Let us define  $g:(0,\infty)\rightarrow \mathbb{R}$ by
	$g(t)=\lambda t^{-\gamma-1}+t^{p-1}$ and note that 
	$$t_{\lambda}=\left(\lambda \left(\frac{\gamma+1}{p-1}\right)\right)^{\frac{1}{p+\gamma}},~\lambda>0,$$
	is the its  unique global minimum whose minimum value is given by
	$$\tilde{g}(\lambda):=g(t_{\lambda})=\lambda^{\frac{p-1}{p+\gamma}}\left(\frac{\gamma+1}{p-1}\right)^{\frac{-\gamma-1}{p+\gamma}}\left(\frac{p+\gamma}{p-1}\right),$$
	which provides the existence of  a $\lambda^{\ast}>0$ such that $\tilde{g}(\lambda^{\ast})=\lambda_{1}$, that is,
	$$\lambda^{\ast}=\lambda_{1}^{\frac{p+\gamma}{p-1}} \left(\frac{\gamma+1}{p-1}\right)^{\frac{\gamma+1}{p-1}}\left(\frac{p-1}{p+\gamma}\right)^{\frac{p+\gamma}{p-1}}.$$
	
	Assume that   $u_{\lambda} \in X_+$ is a solution for Problem $(P_{\lambda})$. By Brezis-Nirenberg Theorem (see \cite{BN} Theorem 3 again), we have that $u_{\lambda}^{-\gamma}\in L^{\infty}(K)$ for every $K\subset \subset \Omega$ which implies by  Theorem $12.2.2$ (see J. Jost   \cite{J}) that $u_{\lambda}\in H^{2,\frac{2^{\ast}}{p}}(K)$ and
	$$-\Delta u_{\lambda} =\lambda a(x)u_{\lambda}^{-\gamma}+b(x)u_{\lambda}^{p}-V(x)u_{\lambda}~\mbox{a. e.}~ \mbox{in}~\Omega,$$ 
	and after a classical bootstrap argument, we obtain   that $u_{\lambda}\in H^{2}(\Omega)\cap C(\overline{\Omega})$. Now we apply  Lemma 3.5 of Figueiredo-Gossez-Ubilla \cite{FGU} to conclude that 
	\begin{equation}\label{B1}
		\int_{\Omega}\nabla u_{\lambda}\nabla e_{1}+V(x)e_{1}u_{\lambda}\leq \lambda_{1} \int_{\Omega}m(x)e_{1}u_{\lambda}.
	\end{equation}
	So, it follows from the definition of $\lambda^\ast$, (\ref{B1})  and the fact that $u_{\lambda}$ is a solution fo Problem $(P_\lambda)$, that
	\begin{align*}
		\int_{\Omega}m(x)(\lambda^{\ast} u_{\lambda}^{-\gamma}+u_{\lambda}^{p})e_{1}&\geq \lambda_{1}\int_{\Omega}m(x)u_{\lambda} e_{1}\geq \int_{\Omega}\nabla e_{1}\nabla u_{\lambda}+V(x)e_{1}u_{\lambda}\\
		&=\lambda\int_{\Omega}a(x)u_{\lambda}^{-\gamma}e_{1}+\int_{\Omega}b(x)u_{\lambda}^{p}e_{1}.
	\end{align*}
	Since $a(x),b(x)\geq m(x)$ in $\Omega$, the last inequality lead us to 
	\begin{align*}
		\lambda^{\ast} \int_{\Omega}a(x)u_{\lambda}^{-\gamma}e_{1}+\int_{\Omega}b(x)u_{\lambda}^{p}e_{1}\geq \int_{\Omega}m(x)(\lambda^{\ast} u_{\lambda}^{-\gamma}+u_{\lambda}^{p})e_{1}\geq \lambda\int_{\Omega}a(x)u_{\lambda}^{-\gamma}e_{1}+\int_{\Omega}b(x)u_{\lambda}^{p}e_{1},
	\end{align*}
	which implies that $\lambda^{\ast}\geq \lambda$. This ends the proof.
\end{proof}

\fim



\begin{thebibliography}{777}
	\bibitem{ABC} A. Ambrosetti, H. Brezis, G. Cerami, {\it Combined effects of concave and convex nonlinearities in some elliptic problems}, J. Funct. Anal. 122 (1994) 519-543.
	\bibitem{BW} T. Bartsch, Z.Q. Wang, {\it Existence  and  multiplicity  results  for
		some superlinear elliptic problems on $\mathbb{R}^{N}$}, Comm. Partial Differential
	Equations 20 (1995) 1725-1741.
	\bibitem{BN} H. Brezis, L. Nirenberg, {\it $H^{1}$ versus $C^{1}$ local minimizers}, C.R. Acad. Sci. Paris 317 (1993) 465-472.
	\bibitem{CD} A. Canino, M. Degiovanni, {\it Nonsmooth critical point theory and quasilinear elliptic equations}, in: Topological Methods in Differential Equations and Inclusions, Montreal, PQ, 1994, in: NATO Adv. Sci. Inst. Ser. C Math. Phys. Sci. 472 (1994) 1-50.
	\bibitem{CP} S. Carl, K. Perera, {\it Generalized solutions of singular $p$-Laplacian problems in
		$\mathbb{R}^{N}$}, NONLINEAR STUDIES, Vol. 18, No. 1, (2011) 113-124.
	\bibitem{CR} F. C\^irstea, V. R\v{a}dulescu, {\it Existence and uniqueness of positive solutions to a semilinear elliptic problem in $\mathbb{R}^{N}$}, J. Math. Anal. Appl. 229 (1999) 417-425.
	\bibitem{D} D.G. Costa, {\it On a class of elliptic systems in $\mathbb{R}^{N}$}, Electron. J.
	Differential Equations 7 (1994) 1-14.
	\bibitem{CRT} M.G.  Crandall,  P.H.  Rabinowitz,  L.  Tatar,  {\it On a Dirichlet problem with a singular  nonlinearity}, Comm. Partial Differential Equations 2 (1977) 193?222.
	\bibitem{E} A. Edelson, {\it Entire solutions of singular elliptic equations}, J. Math. Anal. Appl. 139
	(1989) 523-532.
	\bibitem{FGU1} D.G. de Figueiredo, J.P. Gossez, P. Ubilla, {Local superlinearity and sublinearity for the $p$-Laplacian}, J. Funct. Anal. 257 (2009) 721-752.
	\bibitem{FGU} D.G. de Figueiredo, J.P. Gossez, P. Ubilla, {\it Local superlinearity and sublinearity for indefinite semilinear elliptic problems}, J. Funct. Anal. 199 (2003) 452-467.
	\bibitem{FM} W. Fulks, J.S. Maybee, {\it A singular non-linear equation}, Osaka Math. J. 12 (1960) 1?19.
	\bibitem{GMS} J.V. Goncalves, A.L. Melo, C.A. Santos, {\it On Existence of $L^{\infty}$-Ground States for Singular Elliptic Equations in the Presence of a Strongly Nonlinear Term}, Advanced Nonlinear Studies 7 (2007) 475?490.
	\bibitem{GE} J. Graham-Eagle, {\it A variational approach to upper and lower solutions}, IMA J. Appl. Math. 44 (1990) 181?184.
	\bibitem{H} Y. Haitao, {\it Multiplicity and asymptotic behavior of positive solutions for a singular semilinear
		elliptic problem}, J. Differential Equations 189 (2003) 487?512.
	\bibitem{HSS} N. Hirano, C. Saccon, N. Shioji, {\it Existence of multiple positive solutions for singular  elliptic problems with a concave and convex nonlinearities}, Adv. Differential Equations 9 (2004) 197?220.
	\bibitem{I} Y. Il'yasov, {\it On extreme values of Nehari manifold method via nonlinear Rayleigh's quotient}, Topol. Methods Nonlinear Anal. 49 (2017) 683-714.
	
	\bibitem{J} J. Jost. {\it Partial Differential Equations}, Graduate Texts in Mathematics, Springer,
	New York, 2013, 3.ed.
	\bibitem{KS}  V. Kondratev, M. Shubin, {\it Discreteness  of  spectrum  for  the
		Schr\"odinger operators on manifolds of bounded geometry},
	Operator theory: Advances and Applications 110 (1999) 185-226.
	\bibitem{LM}  A.C. Lazer, P.J. Mckenna, {\it On a singular nonlinear elliptic boundary value problem}, Proc. Amer. Math. Soc. 111 (1991) 720?730.
	\bibitem{LL} X.Q. Liu, Y.X. Guo, J.Q. Liu, {\it Solutions for singular p-Laplacian equation in $\mathbb{R}^{N}$}, Jrl. Syst. Sci. \& Complexity 22 (2009) 597-613.
	\bibitem{OW} W. Omana, M. Willem, {\it Homoclinic orbits for a class of Hamiltonian systems}, Differential Integral Equations 5 (1992) 1115-1120.
	\bibitem{PRR} N.S. Papageorgiou, V.D. Radulescu, D.D. Repovs, {\it Positive solutions for nonlinear parametric singular
		Dirichlet problems}, Bulletin of Mathematical Sciences 8 (2018) 1-22.
	\bibitem{P} S. I. Pohozhaev, {\it The fibration method for solving nonlinear boundary value problems}, Trudy Mat. Inst. Steklov. 192 (1990) 146-163, translated in Proc. Steklov Inst. Math. 1992, no. 3, 157-173, Differential equations and function spaces (Russian).
	\bibitem{SM} K. Silva, A. Macedo, {\it Local minimizers over the Nehari manifold for a class of concave-convex problems with sign changing nonlinearity}, J. Differential Equations 265 (2018) 1894-1921.
	\bibitem{SL1} Y.J. Sun, S.J. Li, {\it Some remarks on a superlinear-singular problem: Estimates of $\lambda^{\ast}$}, Nonlinear Anal. 69 (2008) 2636-2650.
	\bibitem{SWL}  Y.J. Sun, S.P. Wu, Y.M. Long, {\it Combined effects of singular and superlinear nonlinearities in some singular boundary value problems}, J. Differential Equations 176 (2001) 511?531.
\end{thebibliography}
\end{document}